\theoremstyle{plain}
\newtheorem{theorem}{Theorem}[section]
\newtheorem*{theorem*}{Theorem}
\newtheorem*{MCT}{Monotone Convergence Theorem}
\newtheorem*{corollary*}{Corollary}
\newtheorem{lemma}[theorem]{Lemma}
\newtheorem{proposition}[theorem]{Proposition}
\newtheorem{corollary}[theorem]{Corollary}
\newtheorem{conjecture}[theorem]{Conjecture}
\theoremstyle{definition}
\newtheorem{definition}[theorem]{Definition}
\newtheorem{exampletemp}[theorem]{Example}
\newtheorem{remark}[theorem]{Remark}
\newcommand{\cstar}{\ensuremath{\mathrm{C}^{*}}}
\newcommand{\reg}{\mathrm{reg}}
\newcommand{\sng}{\mathrm{sng}}
\begin{document}

\title{Traces on Topological Graph Algebras}
\author{Christopher Schafhauser}
\address{Pure Mathematics, University of Waterloo, 200 University Avenue West, Waterloo, ON, Canada, N2L 3G1}
\email{cschafhauser@uwaterloo.ca}
\subjclass[2010]{Primary: 46L05}
\keywords{Graph \cstar-algebras, Topological graphs, Traces, K-theory}
\date{\today}

\begin{abstract}
  Given a topological graph $E$, we give a complete description of tracial states on the \cstar-algebra $\cstar(E)$ which are invariant under the gauge action; there is an affine homeomorphism between the space of gauge invariant tracial states on $\cstar(E)$ and Radon probability measures on the vertex space $E^0$ which are, in a suitable sense, invariant under the action of the edge space $E^1$.  It is shown that if $E$ has no cycles, then every tracial state on $\cstar(E)$ is gauge invariant.  When $E^0$ is totally disconnected, the gauge invariant tracial states on $\cstar(E)$ are in bijection with the states on $\mathrm{K}_0(\cstar(E))$.
\end{abstract}

\maketitle

\section{Introduction}

The class of topological graph algebras was introduced by Katsura in \cite{Katsura:TGA1} and further developed in \cite{Katsura:TGA2, Katsura:TGA3, Katsura:TGA4}.  These \cstar-algebras provide a simultaneous generalization of both homeomorphism algebras and the Cuntz-Kreiger graph algebras.  This class of algebras contains a large class of nuclear \cstar-algebras including all AF-algebras, Kirchberg algebras satisfying the Universal Coefficient Theorem (UCT), and separable, simple, unital, real rank zero A$\mathbb{T}$-algebras.  A large collection of examples is given in \cite{Katsura:TGA2}.  In fact, there is currently no example of a nuclear \cstar-algebra satisfying the UCT which is known not to be defined by a topological graph, although R{\o}rdam's examples of simple, nuclear, \cstar-algebras with finite and infinite projections constructed in \cite{Rordam} are likely candidates.

In this paper, we consider the tracial states on topological graph algebras.  Given a homeomorphism $\sigma$ of a compact metric space $X$, it is a standard fact that every Radon probability measure $\mu$ on $X$ which is invariant under $\sigma$ induces a tracial state on the crossed product $C(X) \rtimes_\sigma \mathbb{Z}$.  For the Cuntz-Krieger graph algebras generated by a discrete graph, this problem was studied by Tomforde in \cite{Tomforde:OrderedKTheory}.  Tomforde introduced the notion a \emph{graph trace} which is a probability measure on the vertex set $E^0$ of the graph $E$ which is, in a suitable sense, invariant under the action of the edges $E^1$.  We provide a simultaneous generalization of these two theorems.  Below is a summary of our results.

\begin{theorem*}
Let $E$ be a topological graph.
\begin{enumerate}
  \item There is an affine homeomorphism between the space of invariant measures on $E$ and the space of gauge invariant tracial states on $\cstar(E)$.
  \item If $E^0$ is totally disconnected, there is a affine homeomorphism between the space of invariant measures on $E$ and the space of states on $\mathrm{K}_0(\cstar(E))$.
  \item If $E$ is has no cycles, every tracial state on $\cstar(E)$ is gauge invariant.
\end{enumerate}
\end{theorem*}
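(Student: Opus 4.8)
The plan is to handle the three parts in the order (1), (3), (2): part (2) will be obtained by composing the homeomorphism of (1) with a second bijection, while (3) is essentially independent.

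For (1) I would construct the two maps explicitly and verify they are mutually inverse affine homeomorphisms. Given a gauge invariant tracial state $\tau$, restrict it along the canonical embedding $C_0(E^0)\hookrightarrow\cstar(E)$ and invoke the Riesz representation theorem to produce a Radon probability measure $\mu_\tau$ on $E^0$ (the total mass is $1$ because $C_0(E^0)$ contains an approximate unit for $\cstar(E)$). The key point is that $\mu_\tau$ is invariant: applying $\tau$ to the Cuntz--Pimsner covariance relation, which expresses $f\in C_c(E^0_{\reg})$ as a limit of sums $S_\xi S_\xi^*$, and then using the trace identity $\tau(S_\xi S_\xi^*)=\tau(S_\xi^*S_\xi)=\tau(\langle\xi,\xi\rangle)$ together with the inner product formula $\langle\xi,\xi\rangle(v)=\sum_{d(e)=v}|\xi(e)|^2$, converts $\int f\,d\mu_\tau$ into an integral over $E^1$ against the pullback $d^*\mu_\tau$ pushed forward along $r$; this is precisely the defining invariance equation $\mu_\tau=r_*(d^*\mu_\tau)$ on the regular vertices. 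Conversely, given an invariant measure $\mu$, I would build a trace by composing the gauge invariant conditional expectation $\Phi\colon\cstar(E)\to\cstar(E)^\gamma$ onto the fixed point (core) algebra with a trace $\omega_\mu$ on the core. The core is an inductive limit of algebras of compact operators on the iterated tensor powers of the correspondence, a measure on $E^0$ determines a candidate trace at each stage, and the invariance of $\mu$ is exactly what makes these compatible with the connecting maps, so $\omega_\mu$ is well defined and $\tau_\mu:=\omega_\mu\circ\Phi$ is a candidate state.

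The step I expect to be most delicate in (1) is checking that $\tau_\mu=\omega_\mu\circ\Phi$ is genuinely a trace on all of $\cstar(E)$ and not merely on the core: one must verify $\tau_\mu(ab)=\tau_\mu(ba)$ for $a,b$ in different gauge spectral subspaces, and this is where invariance of $\mu$ is used a second time, through the bimodule relations $f\cdot S_\xi=S_{f\cdot\xi}$ and $S_\xi\cdot f=S_{\xi\cdot f}$. Gauge invariance of $\tau_\mu$ is automatic since $\Phi$ is gauge invariant. The maps are mutually inverse: $\mu_{\tau_\mu}=\mu$ by restriction to $C_0(E^0)$, and $\tau=\tau_{\mu_\tau}$ because a gauge invariant trace factors through $\Phi$, hence is determined by its restriction to the core, which in turn is determined by its values on $C_0(E^0)$ by the inductive-limit-of-compacts structure. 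Both maps are visibly affine and weak$^*$-to-weak$^*$ continuous, and since the invariant probability measures and the gauge invariant tracial states are each weak$^*$-compact convex sets, a continuous affine bijection between them is automatically a homeomorphism.

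For (3), gauge invariance of a tracial state $\tau$ is equivalent to the vanishing of $\tau$ on every nonzero gauge spectral subspace, that is, to $\tau(S_\xi S_\eta^*)=0$ whenever $\xi\in C_c(E^n)$ and $\eta\in C_c(E^m)$ with $n\neq m$. Taking adjoints reduces to the case $n>m$, and the trace identity together with $S_\eta^*S_{\xi'}=\langle\eta,\xi'\rangle$ rewrites $\tau(S_\xi S_\eta^*)=\tau(S_\eta^*S_\xi)$, after expanding $\xi$ into elementary tensors, as a combination of terms $\tau(S_\zeta)$ with $\zeta\in C_c(E^k)$ and $k\geq1$. Thus everything reduces to showing $\tau(S_\zeta)=0$ for $\zeta\in C_c(E^k)$ of positive length. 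Here the hypothesis that $E$ has no cycles enters: every $k$-path $\mu$ in the support of $\zeta$ satisfies $d(\mu)\neq r(\mu)$, so after localizing $\zeta$ to a sufficiently small open set (using a partition of unity and the linearity $\zeta\mapsto S_\zeta$) the compact sets $\overline{d(\operatorname{supp}\zeta)}$ and $\overline{r(\operatorname{supp}\zeta)}$ become disjoint. A Urysohn function $f\in C_0(E^0)$ equal to $1$ on the sources and $0$ on the ranges then satisfies $\zeta\cdot f=\zeta$ and $f\cdot\zeta=0$, so the trace identity $\tau(S_\zeta)=\tau(S_\zeta\, f)=\tau(f\,S_\zeta)=\tau(S_{f\cdot\zeta})=0$ finishes the argument. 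The main obstacle here is precisely that $d(\operatorname{supp}\zeta)$ and $r(\operatorname{supp}\zeta)$ need not be disjoint globally even in the absence of cycles; the localization step is exactly what repairs this.

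Finally, (2) follows by composing the homeomorphism of (1) with a bijection between gauge invariant tracial states and states on $\mathrm{K}_0(\cstar(E))$ available when $E^0$ is totally disconnected. Any tracial state $\tau$ induces a state $\tau_*$ on $\mathrm{K}_0$ by $\tau_*[p]=\tau(p)$. When $E^0$ is totally disconnected, $C_0(E^0)$ has real rank zero and $\mathrm{K}_0(C_0(E^0))$ is the group of compactly supported continuous integer valued functions on $E^0$; fitting $\mathrm{K}_0(\cstar(E))$ into Katsura's six term exact sequence, in which the relevant map on $\mathrm{K}_0(C_0(E^0))$ is $1-[E^1]$, identifies a state on $\mathrm{K}_0(\cstar(E))$ with a positive, $[E^1]$-invariant functional on $\mathrm{K}_0(C_0(E^0))$, which by total disconnectedness is precisely an invariant Radon probability measure on $E^0$. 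Injectivity of $\tau\mapsto\tau_*$ among gauge invariant traces holds because, in real rank zero, projections in the core span a dense subspace, so the numbers $\tau(p)$ determine $\tau$ on the core and hence everywhere; surjectivity is the identification just described. I expect the $\mathrm{K}$-theoretic bookkeeping, in particular matching the order structure on $\mathrm{K}_0$ with positivity of measures, to be the delicate point in (2).
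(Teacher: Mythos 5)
Your part (3) is essentially the paper's own argument: the reduction of gauge invariance of a trace to the vanishing of $\tau(\pi^k(\xi))$ for $k \ge 1$ (Lemma \ref{lem:TracesOnCuntzPimsnerAlgebras2}), followed by localization with a partition of unity and the Urysohn-function trick $\zeta \cdot f = \zeta$, $f \cdot \zeta = 0$, is exactly Proposition \ref{prop:NoCycles} and Corollary \ref{cor:NoCycles}, so that part stands.

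In part (1) there are two problems. First, the invariance you verify for $\mu_\tau$ is only the equality over regular vertices; Definition \ref{defn:InvariantMeasure} also demands the inequality $\int_{E^1} f\circ r\, ds^*\mu \le \int_{E^0} f\, d\mu$ for \emph{every} positive $f \in C_c(E^0)$, and this is not cosmetic. Take the discrete graph with one vertex $v$ and infinitely many loops: then $v$ is singular, $E^0_\reg = \emptyset$, and $\cstar(E) \cong \mathcal{O}_\infty$. Your equality condition is vacuous there, so $\delta_v$ would qualify as an invariant measure, yet $\mathcal{O}_\infty$ has no tracial states at all; it is precisely the inequality (which the paper extracts from a Fock-space estimate, Lemma \ref{lem:InequalityForFunctionsOnEdges}) that rules $\delta_v$ out, and your proposal never proves it. Second, your converse construction (a dual trace $\omega_\mu$ on the core composed with the gauge expectation $\Phi$) is genuinely different from the paper's, which builds a representation on the boundary path space $\partial E$ and realizes the trace as a vector state, so that positivity is automatic. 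Your route is the standard one for discrete graphs, but for topological graphs its hard point is hidden inside ``so $\omega_\mu$ is well defined'': the core is an inductive limit of iterated extensions by algebras of compacts on the powers $H(E)^{\otimes k}$, not an AF algebra with explicit matrix units, and prescribing $\omega_\mu$ on spanning elements does not make positivity automatic; this is also exactly where the singular-vertex inequality has to enter. (Your instinct that traciality of $\omega_\mu \circ \Phi$ requires invariance is correct: composing the UHF trace on the core of $\mathcal{O}_n$ with the expectation yields the gauge KMS state, not a trace.)

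Part (2) has the clearest gap. You identify states on $\mathrm{K}_0(\cstar(E))$ with positive $\psi$-invariant functionals on $C_0(E^0,\mathbb{Z})$ via the cokernel presentation of Proposition \ref{prop:PimsnerVoiculescuSequence}. But a state must be positive on the whole cone $\mathrm{K}_0(\cstar(E))^+$, which is in general strictly larger than the image of $C_0(E^0,\mathbb{Z})^+$ under $\pi^0_*$, so positivity cannot be tested only against pulled-back positive classes. The same graph refutes the identification: for $\mathcal{O}_\infty$ the sequence gives $\mathrm{K}_0 \cong \mathbb{Z}$, and there is an obvious normalized positive (vacuously invariant) functional on $C_0(E^0,\mathbb{Z}) \cong \mathbb{Z}$, but $\mathrm{K}_0(\mathcal{O}_\infty)^+ = \mathbb{Z}$ by pure infiniteness, so there are no states at all. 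Surjectivity of $\tau \mapsto \tau_*$ onto the state space is not K-theoretic bookkeeping: the paper's proof of Theorem \ref{thm:StatesOnK0} lifts a state on $\mathrm{K}_0$ to a quasitrace (Blackadar--R{\o}rdam), upgrades the quasitrace to a trace using exactness (Haagerup, Kirchberg), and then averages over the gauge action, using that $p$ and $\gamma_z(p)$ are homotopic projections so all trace values are preserved. That lifting-plus-averaging argument is the missing idea in your (2); without it, or without a direct proof that your invariant functionals are positive on all of $\mathrm{K}_0(\cstar(E))^+$ (which is essentially equivalent to the theorem), composing with (1) does not go through.
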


Given a tracial state $\tau$ on $\cstar(E)$, the composition
\[ \begin{tikzcd} C_0(E^0) \arrow{r}{\iota} & \cstar(E) \arrow{r}{\tau} & \mathbb{C} \end{tikzcd} \]
is a state on $C_0(E^0)$ and hence is given by integration against a Radon probability measure $\mu$ on $E^0$.  The invariance of $\mu$ can be verified directly using the Cuntz-Krieger relations in $\cstar(E)$.

Most of the work involved is in proving the converse.  The key observation is that given a tracial state $\tau$ on $\cstar(E)$, the GNS space $L^2(\cstar(E), \tau)$ decomposes as a direct integral over the boundary path space $\partial E$ developed by Yeend in \cite{Yeend1} and \cite{Yeend2}.  The fibres of the direct integral and the measure on the base space $\partial E$ can be described explicitly in terms of the graph $E$ and the invariant measure $\mu$ on $E$.  Hence it is possible to build the GNS representation of the desired tracial state using only the invariant measure on the graph and then the trace is constructed as a vector state on this space.

More precisely, given a topological graph $E$, we define a Hilbert module $L^2(E)$ over the algebra $C_0(\partial E)$ and a representation $\pi : \cstar(E) \rightarrow \mathbb{B}(L^2(E))$.  Given an invariant measure $\mu$ on the graph $E$, there is an induced measure $\tilde{\mu}$ on $\partial E$ and extending scalars yields a Hilbert space
\[ L^2(E, \mu) := L^2(E) \otimes_{C_0(\partial E)} L^2(\partial E, \mu) \]
and a representation $\pi_\mu$ of $\cstar(E)$ on the space $L^2(E, \mu)$.  Moreover, the representation $\pi_\mu$ admits a cyclic vector $\hat{u} \in L^2(E, \mu)$ such that $\tau := \langle \hat{u}, \pi( \cdot ) \hat{u} \rangle$  is a tracial state on $\cstar(E)$.  This trace is indeed invariant under the gauge action and recovers the given measure $\mu$ when restricted to $C_0(E^0)$.

Describing all tracial states on $\cstar(E)$ would be a difficult task in general.  However, it appears that every tracial state on $\cstar(E)$ will be gauge invariant in many important cases.  For homeomorphism algebras, this is the case whenever the action is free and for discrete graphs, this is the case when the graph satisfies condition (K).  In \cite{Katsura:TGA3}, Katsura introduced a freeness condition for topological graphs which yields a simultaneous generalization of both these notions.  It is probably true that every tracial state on a topological graph algebra $\cstar(E)$ is gauge invariant whenever $E$ is free and, in particular, whenever $\cstar(E)$ is simple.  We were not able to prove this in full generality, but the result holds in the special case when $E$ has no cycles.

In the case when $E$ is a totally disconnected topological graph, the gauge invariant tracial states on $\cstar(E)$ are precisely those which can be detected by the ordered group $\mathrm{K}_0(\cstar(E))$.  As an abelian group, $\mathrm{K}_0(\cstar(E))$ can be described explicitly via the Pimsner-Voiculescu sequence for topological graph algebras, but very little is known about the order structure in general.  When $E$ is a minimal, totally disconnected graph and $E^0$ is compact, our results on traces can be used, at least in principle, to describe the positive cone in $\mathrm{K}_0(\cstar(E))$ up to perforation.  It is likely that $\mathrm{K}_0(\cstar(E))$ is weakly unperforated for minimal, totally disconnected graphs $E$ which would then give a complete description of the positive cone on $\mathrm{K}_0(\cstar(E))$.

The paper is structured as follows.  Section 2 recalls a few definitions and fundamental results on topological graphs in order to standardize notation.  Section 3 is devoted to the boundary path space $\partial E$ of $E$.  Invariant measures on topological graphs are introduced in Section 4 and it is shown that invariant measures on $E$ extend to invariant measures on $\partial E$.  In section 5, the representations of $\cstar(E)$ on the Hilbert module $L^2(E)$ and the Hilbert spaces $L^2(E, \mu)$ mentioned above are constructed.  Section 6 contains most of our main results and the final section is devoted to totally disconnected graphs and $\mathrm{K}$-theoretic calculations.

We end this introduction with some remarks on notation.  On Hilbert modules and Hilbert spaces, inner products are always linear in the second variable and conjugate linear in the first.  For graph algebras, there are two competing conventions in the literature.  We follow the conventions in Raeburn's book \cite{Raeburn}, which agrees with the notation used by Katsura in \cite{Katsura:TGA1} where topological graphs were introduced; in particular, an edge represents a partial isometry from the projection at the source of the edge into the projection at the range of the edge.

We do not assume our spaces are second countable.  Because of this, there are some measure theoretic technicalities which occur.  The following extension of the Monotone Convergence Theorem (see Proposition 7.12 in \cite{Folland}) allows us to work around these issues in the non-separable case.  It will be used implicitly throughout the paper.

\begin{MCT}
Suppose $X$ is a locally compact Hausdorff space and $\mu$ is a Radon measure on $X$.  If $(f_n) \subseteq C_c(X)$ is an increasing net of positive functions converging pointwise to a (necessarily Borel) function $f : X \rightarrow [0, \infty]$, then
\[ \int_X f \, d\mu = \lim_n \int_X f_n \, d\mu. \]
\end{MCT}

\section{Topological Graph Algebras}

We recall briefly the definition of topological graphs and the \cstar-algebras they generate in order to standardize notation.  The reader is referred to Katsura's paper \cite{Katsura:TGA1} for details.

\begin{definition}
A \emph{topological graph} $E = (E^0, E^1, r, s)$ consists of locally compact Hausdorff spaces $E^0$ and $E^1$, a continuous function $r : E^1 \rightarrow E^0$, and a local homeomorphism $s : E^1 \rightarrow E^0$.  Elements of $E^0$ and $E^1$ are called \emph{vertices} and \emph{edges} respectively.
\end{definition}

There are two subsets of the vertex space $E^0$ which have a special role in the theory.

\begin{definition}
There is a maximal open set $E^0_\reg \subseteq E^0$ such that $r$ restricts to a proper surjection $r^{-1}(E^0_\reg) \rightarrow E^0_\reg$.  Define $E^0_\sng = E^0 \setminus E^0_\reg$.  The elements of $E^0_\reg$ and $E^0_\sng$ are called \emph{regular} and \emph{singular} vertices, respectively.
\end{definition}

\begin{definition}
A \emph{path of length $n$} in $E$ consists of a word $\alpha_1 \cdots \alpha_n$ such that $\alpha_i \in E^1$ and $s(\alpha_i) = r(\alpha_{i+1})$ for each $i = 1, \ldots, n-1$.  Let $E^n$ denote the path of lengths $n$.  Define the \emph{finite path space} of $E$ by $E^* = \coprod_{n=0}^\infty E^n$.  We endow $E^n$ with relative product topology and $E^*$ with the topology generated by the $E^n$.  Extend the range and source maps to $r, s : E^* \rightarrow E^0$ by
\[ r(\alpha_1 \cdots \alpha_n) = r(\alpha_1) \quad \text{and} \quad s(\alpha_1 \cdots \alpha_n) = s(\alpha_n). \]
Then $r : E^* \rightarrow E^0$ is continuous and $s : E^* \rightarrow E^0$ is a local homeomorphism.

Define the \emph{infinite path space} by
\[ E^\infty = \{ \alpha_1 \alpha_2 \cdots : s(\alpha_i) = r(\alpha_{i+1})  \text{ for every $i \in \mathbb{N}$} \} \]
Extend the range map to $r : E^\infty \rightarrow E^0$ by $r(\alpha_1 \alpha_2 \cdots ) = r(\alpha_1)$.

Given a path $\alpha \in E^* \cup E^\infty$, we write $|\alpha|$ for the length of $\alpha$.  That is, $|\alpha| = n$ if $\alpha \in E^n$ for $n \in \mathbb{N}$ and $|\alpha| = \infty$ if $\alpha \in E^\infty$.
\end{definition}

The most obvious choice of topology on $E^\infty$ is the product topology.  The defect is that the product topology on $E^\infty$ need not be locally compact.  Yeend introduced a larger space $\partial E$ containing $E^\infty$ and showed there is a natural locally compact topology on $\partial E$.  See Section \ref{sec:BoundaryPathSpace} below.

For our purposes, it will be useful to partition the finite path space $E^*$ into two subsets: paths starting at regular vertices and paths starting at singular vertices.  We introduce the following notation.

\begin{definition}\label{defn:RegularPaths}
Define
\[ E^n_\sng = \{ \alpha \in E^n : s(\alpha) \in E^0_\sng \} \quad \text{and} \quad E^n_\reg = \{ \alpha \in E^n : s(\alpha) \in E^0_\reg \}. \]
The sets $E^*_\sng$ and $E^*_\reg$ are defined similarly.
\end{definition}

We now define a \cstar-algebra $\cstar(E)$ associated to a topological graph $E$.  First we recall a simple fact which will be need throughout the paper.  The proof follows as in Lemmas 1.4 and 1.5 of \cite{Katsura:TGA1}.

\begin{lemma}\label{lem:LocalHomeomorphism}
Suppose $X$ and $Y$ are locally compact Hausdorff spaces and $\varphi : X \rightarrow Y$ is a local homoeomorphism.  If $f \in C_c(X)$, the function
\[ Y \rightarrow \mathbb{C}, \qquad y \mapsto \sum_{x \in \varphi^{-1}(y)} f(x) \]
is defined and is compactly supported and continuous.  In fact, the sum occurring on the right is always finite.
\end{lemma}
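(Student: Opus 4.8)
The plan is to reduce, by means of a partition of unity, to the case in which $f$ is supported in a single open set on which $\varphi$ is a homeomorphism onto its image, and then to recognise the resulting sum as the extension by zero of a compactly supported continuous function. Throughout, write $L = \operatorname{supp}(f)$, a compact subset of $X$, and recall that a local homeomorphism is an open map which is injective on a neighbourhood of each point; in particular each fibre $\varphi^{-1}(y)$ is discrete in $X$. First I would settle finiteness, so that $g(y) := \sum_{x \in \varphi^{-1}(y)} f(x)$ is well defined. Since $f$ vanishes off $L$, only the points of $\varphi^{-1}(y) \cap L$ contribute; this set is closed in the compact set $L$, hence compact, and it is discrete as a subset of the discrete fibre $\varphi^{-1}(y)$, so it is finite. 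Thus $g(y)$ is a finite sum for every $y$, and $g(y) = 0$ whenever $y \notin \varphi(L)$, so $g$ is supported in the compact set $\varphi(L)$.

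Next I would reduce to a single chart. For each $x \in L$ choose an open neighbourhood $W$ with $\varphi|_W$ a homeomorphism onto an open set; by compactness finitely many such sets $W_1, \ldots, W_n$ cover $L$. Taking a partition of unity $\rho_1, \ldots, \rho_n$ in $C_c(X)$ subordinate to this cover with $\sum_i \rho_i = 1$ on $L$, one has $f = \sum_i \rho_i f$ with each $f_i := \rho_i f$ supported in $W_i$. As the assignment $f \mapsto g$ is plainly linear, it suffices to prove the conclusion for each $f_i$ separately; that is, I may assume $\operatorname{supp}(f) \subseteq W$ where $\varphi|_W : W \to V := \varphi(W)$ is a homeomorphism onto an open set.

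In this situation, for $y \in V$ the unique point of $W$ lying in $\varphi^{-1}(y)$ is $(\varphi|_W)^{-1}(y)$, while every other point of the fibre lies off $\operatorname{supp}(f)$; hence $g(y) = f\big((\varphi|_W)^{-1}(y)\big)$ for $y \in V$ and $g(y) = 0$ for $y \notin V$. On $V$ the function $g$ therefore agrees with $f \circ (\varphi|_W)^{-1}$, which is continuous with support $\varphi(\operatorname{supp} f) \subseteq V$, compact; and $g$ is exactly the extension of this function by zero to $Y$. The extension by zero of an element of $C_c(V)$ lies in $C_c(Y)$: it is continuous on $V$ and on the open set $Y \setminus \varphi(\operatorname{supp} f)$, and these two open sets cover $Y$ precisely because $\varphi(\operatorname{supp} f) \subseteq V$. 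Summing over the charts then gives $g \in C_c(Y)$ in general.

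The main obstacle is the continuity at points of the topological boundary of $V$, where $g$ switches from the chart formula to being identically zero. The point of the support bookkeeping above is exactly that $g$ already vanishes on a neighbourhood of each point of $\overline{V} \setminus V$, since such points lie outside the compact, hence closed, set $\varphi(\operatorname{supp} f)$; thus no genuine matching across the boundary is required and the gluing lemma for open covers applies. Everything else is routine.
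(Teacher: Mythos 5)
Your proof is correct: the finiteness argument (compact plus discrete fibre intersection), the partition-of-unity reduction to a single chart, and the extension-by-zero gluing are all sound in the locally compact Hausdorff setting. This is essentially the same argument the paper relies on, since it proves the lemma by citing Lemmas 1.4 and 1.5 of Katsura's paper \cite{Katsura:TGA1}, which proceed by exactly this chart decomposition.
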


Given a topological graph $E$, the space $C_c(E^1)$ is a bimodule over $C_0(E^0)$ with the left and right actions given by $a \cdot \xi = (a \circ r) \xi$ and $\xi \cdot a = \xi(a \circ s)$ for $a \in C_0(E^0)$ and $\xi \in C_c(E^1)$.  Moreover, there is a $C_0(E^0)$-valued inner product on $C_c(E^1)$ given by
\[ \langle \xi, \eta \rangle (v) = \sum_{e \in s^{-1}(v)} \overline{\xi(e)} \eta(e) \]
for each $\xi, \eta \in C_c(E^1)$ and $v \in E^0$.

\begin{definition}\label{defn:GraphCorrespondence}
Let $H(E)$ denote the \cstar-correspondence obtained as the completion of $C_c(E^1)$ with respect to the inner product given above.
\end{definition}

The topological graph algebra $\cstar(E)$ is defined as the Cuntz-Pimsner algebra of the \cstar-correspondence $(H(E), C_0(E^0))$.  We recall briefly how this is defined.

\begin{definition}
A \emph{Toeplitz representation} $(\pi^0, \pi^1)$ of $E$ on a \cstar-algebra $A$ consists of a $\ast$-homomorphism $\pi^0 : C_0(E^0) \rightarrow A$ and a linear map $\pi^1 : H(E) \rightarrow A$ such that
\[ \pi^1(\xi)^*\pi^1(\eta) = \pi^0(\langle \xi, \eta \rangle) \qquad \text{and} \qquad \pi^0(a) \pi^1(\xi) = \pi^1(a\xi) \]
for all $a \in C_0(E^0)$ and $\xi, \eta \in H(E)$.
\end{definition}

Given a Toeplitz representation $(\pi^0, \pi^1)$ of $E$ on $A$, for each $k \geq 2$, there is an induced linear map $\pi^k : H(E)^{\otimes k} \rightarrow A$ given by
\[ \pi^k(\xi_1 \otimes \ldots \otimes \xi_k) = \pi^1(\xi_1) \cdots \pi^1(\xi_k). \]
For any $k \in \mathbb{N}$, $C_c(E^k)$ is a dense submodule of the tensor power $H(E)^{\otimes k}$; given functions $\xi_1, \ldots, \xi_k \in C_c(E^1)$, the elementary tensor $\xi_1 \otimes \cdots \otimes \xi_k$ acts on $E^k$ by
\[ (\xi_1 \otimes \cdots \otimes \xi_k)(\alpha_1 \cdots \alpha_k) = \xi_1(\alpha_1) \cdots \xi_k(\alpha_k) \]
for all $\alpha_1 \cdots \alpha_k \in E^k$.

For any locally compact Hausdorff space $X$, $C_b(X)$ denotes the \cstar-algebra of continuous bounded functions on $X$.

\begin{proposition}\label{prop:LeftMultiplicationMap}
There is a $\ast$-homomorphism $\lambda : C_b(E^1) \rightarrow \mathbb{B}(H(E))$ given by pointwise multiplication.  Moreover, $\lambda(\xi) \in \mathbb{K}(H(E))$ if and only if $\xi \in C_0(E^1)$.
\end{proposition}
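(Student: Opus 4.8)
The plan is to establish the two assertions separately, computing everything on the dense submodule $C_c(E^1) \subseteq H(E)$ where the actions are pointwise. For the first assertion, given $f \in C_b(E^1)$ and $\xi \in C_c(E^1)$ the product $f\xi$ again lies in $C_c(E^1)$, and the estimate
\[ \langle f\xi, f\xi \rangle(v) = \sum_{e \in s^{-1}(v)} |f(e)|^2 |\xi(e)|^2 \leq \|f\|_\infty^2 \, \langle \xi, \xi \rangle(v) \]
shows $\langle f\xi, f\xi \rangle \leq \|f\|_\infty^2 \langle \xi, \xi \rangle$ as elements of $C_0(E^0)$, so pointwise multiplication extends to a bounded operator $\lambda(f)$ on $H(E)$ with $\|\lambda(f)\| \leq \|f\|_\infty$. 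The parallel identity $\langle f\xi, \eta \rangle = \langle \xi, \overline{f}\eta \rangle$ shows $\lambda(f)$ is adjointable with $\lambda(f)^* = \lambda(\overline{f})$; linearity and multiplicativity are immediate from the pointwise definition, so $\lambda$ is a $*$-homomorphism into $\mathbb{B}(H(E))$.

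The driving observation for the second assertion is the identity $\theta_{\xi, \eta} = \lambda(\xi \overline{\eta})$ whenever $\xi, \eta \in C_c(E^1)$ are supported in a common open set $U$ on which the local homeomorphism $s$ is injective, where $\theta_{\xi, \eta}$ denotes the rank-one operator $\zeta \mapsto \xi \cdot \langle \eta, \zeta \rangle$. Indeed, for $e \in U$ the only surviving term of $\langle \eta, \zeta \rangle(s(e)) = \sum_{e' \in s^{-1}(s(e))} \overline{\eta(e')} \zeta(e')$ is $e' = e$ by injectivity of $s$ on $U$, while outside $U$ both sides vanish.

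For the forward direction I would first reduce to $f \in C_c(E^1)$: since $\lambda$ is contractive, $\mathbb{K}(H(E))$ is closed, and $C_c(E^1)$ is dense in $C_0(E^1)$, it suffices to treat compactly supported $f$. Covering the compact support of $f$ by finitely many open sets $V_1, \dots, V_n$ on each of which $s$ restricts to a homeomorphism, choosing a subordinate partition of unity $\rho_1, \dots, \rho_n$, and picking $\eta_i \in C_c(V_i)$ equal to $1$ on the support of $\rho_i f$, I obtain $\rho_i f = (\rho_i f)\overline{\eta_i}$, so the identity above gives $\lambda(\rho_i f) = \theta_{\rho_i f, \eta_i} \in \mathbb{K}(H(E))$ and hence $\lambda(f) = \sum_i \lambda(\rho_i f) \in \mathbb{K}(H(E))$.

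For the converse, suppose $\lambda(f) \in \mathbb{K}(H(E))$, fix $\epsilon > 0$, and approximate $\lambda(f)$ within $\epsilon/2$ by a finite sum $T$ of operators $\theta_{\xi_j, \eta_j}$ with $\xi_j, \eta_j \in C_c(E^1)$; let $K$ be the compact union of their supports. For any edge $e_0 \notin K$, choose a neighborhood $U \ni e_0$ disjoint from $K$ on which $s$ is injective and a function $\xi \in C_c(U)$ with $\xi(e_0) = \|\xi\|_\infty = 1$; then $\|\xi\| = 1$, each $\langle \eta_j, \xi \rangle$ vanishes so $T\xi = 0$, and $\|\lambda(f)\xi\| \geq |f(e_0)|$, whence $|f(e_0)| \leq \|\lambda(f) - T\| < \epsilon$. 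Thus $\{|f| \geq \epsilon\} \subseteq K$ is compact, and as $\epsilon$ was arbitrary, $f \in C_0(E^1)$. I expect the main obstacle to be the support bookkeeping in this converse: the test vector $\xi$ must simultaneously be normalized (forcing its support into an injectivity set for $s$, where the norm reduces to the supremum norm) and be annihilated by the approximant $T$ (forcing its support to avoid $K$), and verifying both under the single choice of $U$ is where the argument must be handled with care.
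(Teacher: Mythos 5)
Your proof is correct. Note that the paper does not actually prove this proposition --- it is imported from Katsura's paper \cite{Katsura:TGA1} --- so the relevant comparison is with the standard argument there, and yours is essentially it. Your key identity $\theta_{\xi,\eta} = \lambda(\xi\overline{\eta})$, valid when $\xi$ and $\eta$ are supported in a common open set on which $s$ is injective, is precisely the mechanism underlying Lemma \ref{lem:PartitionOfUnity} of the paper (Katsura's Lemmas 1.15 and 1.16), and your forward direction is the partition-of-unity decomposition that that lemma packages. The converse is also sound: the three facts it rests on --- that $\|\xi\|_{H(E)} = \|\xi\|_\infty$ when $\operatorname{supp}(\xi)$ lies in an injectivity set for $s$, that $T\xi = 0$ because the $\eta_j$ are supported in $K$ while $\xi$ is supported off $K$, and that $\|\lambda(f)\xi\| \geq |f(e_0)|$ --- all check out, and $\{|f| \geq \varepsilon\}$ is then compact as a closed subset of $K$, giving $f \in C_0(E^1)$. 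The only step worth making explicit is that the finite-rank approximant $T$ can indeed be taken with entries in $C_c(E^1)$; this follows since $C_c(E^1)$ is dense in $H(E)$ and $(\xi, \eta) \mapsto \theta_{\xi, \eta}$ is jointly norm-continuous in its entries.
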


For any Hilbert module $H$ and $\xi, \eta \in H$, let $\xi \otimes \eta^*$ denote the rank one operator on $H$ defined by $(\xi \otimes \eta^*)(\zeta) = \xi \langle \eta, \zeta \rangle$ for $\zeta \in H$.

\begin{proposition}\label{prop:InducedMapOnCompacts}
Given a Toeplitz representation $(\pi^0, \pi^1)$ of a topological graph $E$ on a $\cstar$-algebra $A$, there is an induced $\ast$-homomorphism $\varphi : \mathbb{K}(H(E)) \rightarrow A$ determined by $\varphi(\xi \otimes \eta^*) = \pi^1(\xi) \pi^1(\eta)^*$.
\end{proposition}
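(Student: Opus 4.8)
The plan is to realize $\varphi$ as the compression to an internal tensor product of the canonical amplification of operators, which sidesteps the only genuine difficulty: a finite-rank operator admits many decompositions into rank-one operators, so the formula $\varphi(\xi \otimes \eta^*) = \pi^1(\xi)\pi^1(\eta)^*$ is not obviously well defined. Before doing this I would record the auxiliary identity $\pi^1(\xi \cdot a) = \pi^1(\xi)\pi^0(a)$ for $\xi \in H(E)$ and $a \in C_0(E^0)$, which is not part of the definition of a Toeplitz representation but follows from it: writing $w = \pi^1(\xi \cdot a) - \pi^1(\xi)\pi^0(a)$, the two Toeplitz relations give $\pi^1(\eta)^* w = \pi^0(\langle \eta, \xi \cdot a\rangle) - \pi^0(\langle \eta, \xi\rangle)\pi^0(a) = 0$ for every $\eta$, and taking $\eta \in \{\xi \cdot a,\ \xi\}$ yields $w^* w = 0$, so $w = 0$.

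Next I would regard $A$ as a \cstar-correspondence from $C_0(E^0)$ to $A$: the right action and $A$-valued inner product are $a \cdot b = ab$ and $\langle a, b\rangle = a^* b$, while the left action of $C_0(E^0)$ is implemented through $\pi^0$, that is, $c \cdot a = \pi^0(c) a$. Forming the internal tensor product $H(E) \otimes_{C_0(E^0)} A$, I would define a map $W$ on elementary tensors by $W(\xi \otimes a) = \pi^1(\xi) a$. The auxiliary identity shows $W$ respects the balancing relation $\xi c \otimes a = \xi \otimes \pi^0(c) a$, and the Toeplitz relation $\pi^1(\xi)^* \pi^1(\eta) = \pi^0(\langle \xi, \eta\rangle)$ gives $\langle W(\xi \otimes a), W(\eta \otimes b)\rangle = a^* \pi^0(\langle \xi, \eta\rangle) b = \langle \xi \otimes a, \eta \otimes b\rangle$; hence $W$ extends to an isometry $W : H(E) \otimes_{C_0(E^0)} A \to A$ of Hilbert $A$-modules with $W^* W = 1$.

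With $W$ in hand, consider $\Phi : \mathbb{K}(H(E)) \to \mathbb{B}(A) = M(A)$ given by $\Phi(T) = W(T \otimes 1) W^*$, where $T \mapsto T \otimes 1$ is the canonical (contractive, hence well defined and decomposition-independent) amplification $\mathbb{K}(H(E)) \to \mathbb{B}(H(E) \otimes_{C_0(E^0)} A)$. Because $W^* W = 1$, the identity $\Phi(ST) = W(S \otimes 1) W^* W (T \otimes 1) W^* = \Phi(S)\Phi(T)$ holds, so $\Phi$ is a $\ast$-homomorphism. Writing $\xi \otimes \eta^* = V_\xi V_\eta^*$ with $V_\xi a = \xi \otimes a$ and noting $W V_\xi$ is left multiplication by $\pi^1(\xi)$, a direct computation shows $\Phi(\xi \otimes \eta^*)$ is left multiplication by $\pi^1(\xi)\pi^1(\eta)^*$; since left multiplication $L : A \to M(A)$ is an isometric $\ast$-homomorphism onto a closed subalgebra, continuity of $\Phi$ forces $\Phi(\mathbb{K}(H(E))) \subseteq L(A)$, and $\varphi := L^{-1} \circ \Phi$ is the desired $\ast$-homomorphism with $\varphi(\xi \otimes \eta^*) = \pi^1(\xi)\pi^1(\eta)^*$.

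The main obstacle is precisely the well-definedness of the formula, and the point of the construction is that it never requires checking this by hand: the operator $T \otimes 1$ depends only on $T$, so $\Phi$ and hence $\varphi$ are automatically independent of the chosen rank-one decomposition, while contractivity of both the amplification and $W$ gives $\|\varphi(T)\| \le \|T\|$ for free. This is, of course, the standard fact that a Toeplitz representation of a correspondence induces a representation of the compacts, and one could alternatively quote it from the general theory in the references.
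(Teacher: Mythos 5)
The paper itself states this proposition without proof, as standard background from the general theory of \cstar-correspondences, so your argument has to stand on its own. Your strategy --- transporting the manifestly well-defined amplification $T \mapsto T \otimes 1$ on $H(E) \otimes_{C_0(E^0)} A$ back to $A$ --- is the right idea, and your auxiliary identity $\pi^1(\xi \cdot a) = \pi^1(\xi)\pi^0(a)$ is proved correctly. But there is a genuine gap at the central step: you assert that $W : H(E)\otimes_{C_0(E^0)} A \to A$, $W(\xi \otimes a) = \pi^1(\xi)a$, ``extends to an isometry with $W^*W = 1$''. The existence of $W^*$ is precisely adjointability of $W$, and an inner-product-preserving map of Hilbert \cstar-modules need not be adjointable: adjointability of an isometry is equivalent to its range being orthogonally complemented, which can fail. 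Concretely, take $E^0 = [0,1]$, $E^1 = (0,1]$, with $r$ and $s$ both the inclusion; then $H(E) \cong C_0((0,1])$ as a Hilbert $C[0,1]$-module, and $\pi^0 = \mathrm{id}$, $\pi^1 = {}$the inclusion $C_0((0,1]) \hookrightarrow C[0,1]$ give a Toeplitz representation on $A = C[0,1]$. In this case $H(E)\otimes_{\pi^0} A \cong C_0((0,1])$ and your $W$ becomes the inclusion $C_0((0,1]) \hookrightarrow C[0,1]$, which has no adjoint: $W^*$ would have to satisfy $\overline{\xi}\,(W^*1) = \langle W\xi, 1\rangle = \overline{\xi}$ for every $\xi \in C_0((0,1])$, forcing $(W^*1)(t) = 1$ for all $t \in (0,1]$, and this function does not lie in $C_0((0,1])$. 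So $\Phi(T) = W(T\otimes 1)W^*$, and with it your multiplicativity argument, is undefined in general --- note that $A$ is even unital here, so unitality does not rescue the step.

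The gap is repairable, and the repair explains why the standard proofs are phrased the way they are: run your construction after first representing $A$ on a Hilbert space, where adjointability is automatic. Fix a faithful representation $\rho : A \to \mathbb{B}(H)$, form the Hilbert space $H(E) \otimes_{C_0(E^0)} H$ (with $C_0(E^0)$ acting via $\rho\circ\pi^0$; the balancing again uses your auxiliary identity), and set $W(\xi \otimes h) = \rho(\pi^1(\xi))h$. Your inner-product computation shows verbatim that $W$ is an isometry of Hilbert spaces, so $W^*$ exists and $W^*W = 1$; then $\Phi(T) = W(T\otimes 1)W^*$ is a $\ast$-homomorphism into $\mathbb{B}(H)$, and writing $\theta_{\xi,\eta}\otimes 1 = V_\xi V_\eta^*$ as you do gives $\Phi(\xi\otimes\eta^*) = (WV_\xi)(WV_\eta)^* = \rho(\pi^1(\xi)\pi^1(\eta)^*)$. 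Since $\rho(A)$ is closed and $\Phi$ is contractive, $\Phi(\mathbb{K}(H(E))) \subseteq \rho(A)$, and $\varphi := \rho^{-1}\circ\Phi$ is the desired map. Alternatively one can stay inside Hilbert modules by co-restricting $W$ to its range $Y := \overline{\pi^1(H(E))A}$, where it is a surjective isometry and hence adjointable, and then identifying $W(\mathbb{K}(H(E))\otimes 1)W^*$ with left multiplication by elements of the \cstar-subalgebra $\overline{\operatorname{span}}\, YY^* \subseteq A$, on which left multiplication acts faithfully; but this needs exactly the bookkeeping your version was designed to avoid, so the Hilbert-space route is the cleaner fix.
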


\begin{definition}\label{defn:CovariantRepresentation}
A Toeplitz representation $(\pi^0, \pi^1)$ of $E$ is called a \emph{covariant representation} of $E$ if $\varphi(\lambda(a \circ r)) = \pi^0(a)$ for all $a \in C_0(E^0_\reg)$.  The \emph{topological graph algebra} $\cstar(E)$ is defined as the universal \cstar-algebra generated by a covariant representation of $E$.
\end{definition}

For a topological graph $E$, there is a natural action $\gamma : \mathbb{T} \curvearrowright \cstar(E)$ which we now describe.  If $\pi^0 : C_0(E^0) \rightarrow \cstar(E)$ and $\pi^1 : H(E) \rightarrow \cstar(E)$ are the canonical maps, the action $\gamma$ is determined by
\[ \gamma_z(\pi^0(a)) = \pi^0(a) \qquad \text{and} \qquad \gamma_z(\pi^1(\xi)) = z \pi^1(\xi) \]
for each $a \in C_0(E^0)$, $\xi \in H(E)$, and $z \in \mathbb{T}$.  The action $\gamma$ is called the \emph{gauge action}.

We end this section with two technical lemmas which will be needed later.  The first follows by combining Lemmas 1.15 and 1.16 in \cite{Katsura:TGA1}.

\begin{lemma}\label{lem:PartitionOfUnity}
If $\zeta \in C_c(E^1)$, there are $n \in \mathbb{N}$ and $\xi_i, \eta_i \in C_c(E^1)$ for $i = 1, \ldots, n$, such that
\begin{enumerate}
  \item $\zeta = \sum_{i=1}^n \xi_i \overline{\eta_i}$,
  \item $\lambda(\zeta) = \sum_{i=1}^n \xi_i \otimes \eta_i^*$, and
  \item if $e, e' \in E^1$ are distinct edges with $s(e) = s(e')$, then $\xi_i(e) \eta_i(e') = 0$.
\end{enumerate}
\end{lemma}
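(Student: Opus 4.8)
The plan is to produce functions $\xi_i,\eta_i \in C_c(E^1)$ satisfying (1) and (3) directly and then to observe that (2) is a purely formal consequence of (1) and (3). I would first record this reduction. For $\omega \in C_c(E^1)$ and $e \in E^1$, the defining formula for the rank one operators and the inner product give
\[
  \left(\sum_i \xi_i \otimes \eta_i^*\right)(\omega)(e) = \sum_i \xi_i(e) \sum_{e' \in s^{-1}(s(e))} \overline{\eta_i(e')}\,\omega(e').
\]
If (3) holds, then every term of the inner sum with $e' \neq e$ vanishes, since $\xi_i(e)\eta_i(e') = 0$ forces $\xi_i(e)\overline{\eta_i(e')} = 0$, so the right-hand side collapses to $\big(\sum_i \xi_i(e)\overline{\eta_i(e)}\big)\omega(e)$, which equals $\zeta(e)\omega(e) = (\lambda(\zeta)\omega)(e)$ by (1). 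As $C_c(E^1)$ is dense in $H(E)$ and both operators are adjointable, this establishes (2). Hence it suffices to arrange (1) and (3).

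For the construction I would exploit that $s$ is a local homeomorphism. Each $e \in E^1$ has an open neighborhood on which $s$ restricts to a homeomorphism onto an open subset of $E^0$, and in particular is injective there. Covering the compact set $K := \operatorname{supp}(\zeta)$ by finitely many such neighborhoods $U_1, \dots, U_n$ and choosing a partition of unity $h_1, \dots, h_n \in C_c(E^1)$ subordinate to this cover with $\sum_i h_i \equiv 1$ on $K$, I would set $\xi_i := h_i \zeta$. Using the locally compact Urysohn lemma, I would then choose $g_i \in C_c(E^1)$ with $0 \le g_i \le 1$, $\operatorname{supp}(g_i) \subseteq U_i$, and $g_i \equiv 1$ on $\operatorname{supp}(h_i)$, and put $\eta_i := g_i$.

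Condition (1) then holds because $g_i \equiv 1$ on $\operatorname{supp}(h_i) \supseteq \operatorname{supp}(\xi_i)$ gives $\xi_i \overline{\eta_i} = h_i \zeta$, so that $\sum_i \xi_i\overline{\eta_i} = \big(\sum_i h_i\big)\zeta = \zeta$. For (3), suppose $e \neq e'$ satisfy $s(e) = s(e')$ and $\xi_i(e)\eta_i(e') \neq 0$; then $e \in \operatorname{supp}(\xi_i) \subseteq U_i$ and $e' \in \operatorname{supp}(\eta_i) \subseteq U_i$, contradicting the injectivity of $s$ on $U_i$, so $\xi_i(e)\eta_i(e') = 0$. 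I do not expect any serious obstacle here: the argument is essentially a partition-of-unity bookkeeping exercise, and the only points requiring genuine care are keeping all supports inside the injectivity neighborhoods $U_i$ while maintaining $\sum_i h_i \equiv 1$ on $\operatorname{supp}(\zeta)$, together with verifying that the collapse of the inner-product sum in the reduction step really uses (3) at the correct index to leave only the diagonal term $e' = e$.
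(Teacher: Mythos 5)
Your proof is correct, and it is essentially the same argument as the paper's: the paper simply cites Lemmas 1.15 and 1.16 of Katsura's paper, whose proofs are exactly this partition-of-unity construction over neighborhoods where $s$ is injective, with the rank-one/multiplication identity collapsing to the diagonal term. Your reduction of (2) to (1) and (3), and the choice $\xi_i = h_i\zeta$, $\eta_i = g_i$, faithfully reconstructs that outsourced argument.
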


\begin{lemma}\label{lem:InequalityForFunctionsOnEdges}
Let $(\pi^0, \pi^1)$ be a Toeplitz representation of a topological graph $E$ and let $\varphi$ denote the induced representation of $\mathbb{K}(H(E))$.  If $\zeta \in C_c(E^1)$ and $a \in C_c(E^0)$ with $0 \leq \zeta \leq a \circ r$, then $0 \leq \varphi(\lambda(\zeta)) \leq \pi^0(a)$.
\end{lemma}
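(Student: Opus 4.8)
The plan is to dispose of the lower bound immediately and then prove the upper bound by reducing to the case in which the source map $s$ is injective on the support of the function, where $\lambda$ collapses to a single rank one operator. For the lower bound, note that $\zeta \geq 0$ lies in $C_0(E^1)$, so $\lambda(\zeta) \geq 0$ in $\mathbb{K}(H(E))$ and $\varphi(\lambda(\zeta)) \geq 0$ since $\varphi$ is a $\ast$\-homomorphism. For the upper bound I would set $b = \sqrt{a} \in C_c(E^0)$, so that $\pi^0(a) = \pi^0(b)^2$, and record the pointwise consequence $0 \leq \sqrt{\zeta} \leq b \circ r$ of the hypothesis $0 \leq \zeta \leq a \circ r$.

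Since $s$ is a local homeomorphism and $\mathrm{supp}\,\zeta$ is compact, I would cover $\mathrm{supp}\,\zeta$ by finitely many open sets on which $s$ is injective and use a subordinate partition of unity to write $\zeta = \sum_{j=1}^m \zeta_j$ with each $\zeta_j \geq 0$ in $C_c(E^1)$, with $s$ injective on $\mathrm{supp}\,\zeta_j$, and with $0 \leq \zeta_j \leq \zeta \leq a \circ r$; this is the concrete form of Lemma~\ref{lem:PartitionOfUnity} for a single positive function. The role of injectivity is the identity $\lambda(g) = \sqrt{g} \otimes (\sqrt{g})^*$, valid for $g \geq 0$ in $C_c(E^1)$ whenever $s$ is injective on $\mathrm{supp}\,g$: one checks it on the dense submodule $C_c(E^1)$, where the inner product $\langle \sqrt{g}, h \rangle(s(e))$ has a single surviving term. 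Combined with Proposition~\ref{prop:InducedMapOnCompacts} and writing $\xi_j = \sqrt{\zeta_j}$, this yields $\varphi(\lambda(\zeta)) = \sum_{j} \pi^1(\xi_j) \pi^1(\xi_j)^*$.

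The heart of the argument is a regularized factorization through $\pi^0(b)$. For $\epsilon > 0$ set $\beta_{j,\epsilon} = \xi_j / ((b \circ r) + \epsilon) \in C_c(E^1)$; then $0 \leq \beta_{j,\epsilon} \leq 1$, the map $s$ is injective on its support, and $((b \circ r) + \epsilon)\beta_{j,\epsilon} = \xi_j$ gives $\pi^1(\xi_j) = \pi^0(b)\pi^1(\beta_{j,\epsilon}) + \epsilon\,\pi^1(\beta_{j,\epsilon})$. Injectivity forces $\langle \beta_{j,\epsilon}, \beta_{j,\epsilon} \rangle \leq 1$, hence $\|\pi^1(\beta_{j,\epsilon})\| \leq 1$ and $\pi^1(\xi_j) = \lim_{\epsilon \to 0} \pi^0(b)\pi^1(\beta_{j,\epsilon})$ in norm. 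The key computation is that, again by the injective identity and linearity, $Z_\epsilon := \sum_j \pi^1(\beta_{j,\epsilon})\pi^1(\beta_{j,\epsilon})^* = \varphi(\lambda(\sum_j \beta_{j,\epsilon}^2))$, where $\sum_j \beta_{j,\epsilon}^2 = \zeta/((b \circ r) + \epsilon)^2 \leq (a \circ r)/((b \circ r) + \epsilon)^2 \leq 1$, so that $0 \leq Z_\epsilon \leq 1$. Since $\pi^0(b)(1 - Z_\epsilon)\pi^0(b) \geq 0$, I obtain $\pi^0(b)Z_\epsilon \pi^0(b) \leq \pi^0(b)^2 = \pi^0(a)$, and letting $\epsilon \to 0$ the left-hand side converges to $\sum_j \pi^1(\xi_j)\pi^1(\xi_j)^* = \varphi(\lambda(\zeta))$.

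The main obstacle is that $r$ need not be proper, so $a \circ r$ need not lie in $C_0(E^1)$; thus $\lambda(a \circ r) \notin \mathbb{K}(H(E))$ and one cannot simply apply $\varphi$ or the covariance relation to $a \circ r$, nor factor $\varphi(\lambda(\zeta))$ through $\lambda(a \circ r)$ without passing to a multiplier or bidual extension of $\varphi$. Descending to the injective pieces is precisely the device that avoids this: there every $\lambda(\zeta_j)$ is an honest rank one operator, and the factor $\pi^0(b)$ can be extracted inside $A$ itself. The delicate point is the bookkeeping in the sum, since bounding each $\varphi(\lambda(\zeta_j))$ separately would only give $\varphi(\lambda(\zeta)) \leq m\,\pi^0(a)$; the correct constant appears only because the functions $\beta_{j,\epsilon}^2$ are summed \emph{before} $\varphi$ is applied, so that the single estimate $\sum_j \beta_{j,\epsilon}^2 \leq 1$ uses the full hypothesis $\zeta \leq a \circ r$ at one stroke.
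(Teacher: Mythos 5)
Your proof is correct, but it follows a genuinely different route from the paper's. The paper reduces everything to the Fock space: since the Fock representation is the universal Toeplitz representation of $E$, it suffices to check the inequality there, and by Lemma \ref{lem:PartitionOfUnity} the operator $\varphi(\lambda(\zeta))$ sends an elementary tensor $\phi \otimes \phi'$ (with $\phi \in H(E)$) to $\lambda(\zeta)(\phi) \otimes \phi'$ and annihilates the degree-zero summand $C_0(E^0)$, so the claim follows at once from the pointwise bound $\zeta \leq a \circ r$. You instead work inside an arbitrary Toeplitz representation: you split $\zeta = \sum_j \zeta_j$ with $s$ injective on each $\operatorname{supp} \zeta_j$ (this is essentially how Lemma \ref{lem:PartitionOfUnity} itself is proved, so the decomposition is available), use the rank-one identity $\lambda(\zeta_j) = \sqrt{\zeta_j} \otimes (\sqrt{\zeta_j})^*$, and extract the factor $\pi^0(\sqrt{a})$ via the regularization $\beta_{j,\epsilon}$, concluding from $0 \leq Z_\epsilon \leq 1$ and norm-closedness of the positive cone. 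The key steps all check out: the rank-one identity under local injectivity of $s$, the bound $\|\pi^1(\beta_{j,\epsilon})\| \leq 1$, the identity $Z_\epsilon = \varphi(\lambda(\sum_j \beta_{j,\epsilon}^2))$ together with $\sum_j \beta_{j,\epsilon}^2 \leq 1$ (the point where the hypothesis is used jointly rather than term by term, exactly as you note), and the norm convergence $\pi^0(\sqrt{a})\, Z_\epsilon\, \pi^0(\sqrt{a}) \to \varphi(\lambda(\zeta))$. What the paper's argument buys is brevity, at the cost of invoking the standard but nontrivial fact that the Fock representation is universal, i.e.\ that the Toeplitz algebra acts faithfully on the Fock module; what yours buys is representation-independence --- everything happens inside $A$ and its unitization, with no appeal to a distinguished faithful model. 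One caveat common to both arguments: you need $a \geq 0$ on all of $E^0$ to form $\sqrt{a}$, and the paper needs it for the degree-zero estimate $0 \leq \langle \pi^0(a)\phi, \phi \rangle$, whereas the hypothesis $0 \leq \zeta \leq a \circ r$ only forces $a \geq 0$ on $r(E^1)$; this is harmless since the lemma is only ever applied with $a$ positive, but it puts your proof and the paper's on the same footing rather than exposing a gap in yours.
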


\begin{proof}
Let $H$ denote the Fock space associated to $H(E)$.  The Toeplitz representation $(\pi^0, \pi^1)$ of $E$ on $\mathbb{B}(H)$ defined by
\[ \pi^0(a)(\phi) = a \phi \qquad \text{and} \qquad \pi^1(\xi)(\phi) = \xi \otimes \phi \]
is the universal Toeplitz representation of $E$.  Hence it is enough to verify the lemma for this representation.

Fix $\xi_i, \eta_i \in C_c(E^1)$ for $1 \leq i \leq n$ as in Lemma \ref{lem:PartitionOfUnity}.  Given $\phi \in H(E)$, $\phi' \in H$, note that
\begin{align*}
  \langle \varphi(\lambda(\zeta)) (\phi \otimes \phi'), \phi \otimes \phi' \rangle &=  \sum_i \langle (\xi_i \otimes \eta_i^*)(\phi) \otimes \phi', \phi \otimes \phi' \rangle = \langle \lambda(\zeta)(\phi) \otimes \phi', \phi \otimes \phi' \rangle \\
  &\leq \langle \lambda(a \circ r)(\phi) \otimes \phi', \phi \otimes \phi' \rangle = \langle \pi^0(a) (\phi \otimes \phi'), \phi \otimes \phi' \rangle.
\end{align*}
Moreover, when $\phi \in H^{\otimes 0} = C_0(E^0)$, we have
\[ \langle \varphi(\lambda(\zeta)) (\phi), \phi \rangle = 0 \leq \langle \pi^0(a) (\phi), \phi \rangle. \]
It follows that $\langle \varphi(\lambda(\zeta)) \phi, \phi \rangle \leq \langle \pi^0(a) \phi, \phi \rangle$ for all $\phi \in H$ and hence $\varphi(\lambda(\zeta)) \leq \pi^0(a)$.
\end{proof}

\section{The Boundary Path Space}\label{sec:BoundaryPathSpace}

In \cite{Yeend1} and \cite{Yeend2}, Yeend introduced a locally compact Hausdorff space $\partial E$ associated to a (higher rank) topological graph $E$ called the \emph{boundary path space}.  In the case of a rank one topological graph $E$ with $E^0_\reg = E^0$, $\partial E$ is exactly the infinite path space $E^\infty$ with the product topology.  In general, the product topology on $E^\infty$ will not be locally compact and the space $\partial E$ is the appropriate replacement.

Kumjian and Li gave a simpler description of $\partial E$ for rank one topological graphs in \cite{KumjianLi} (see also \cite{Webster} for the case when $E$ is discrete).  We follow their approach.  For our purposes, it will be useful to have approximate versions $\partial E_n$ of the boundary path space whose projective limit is $\partial E$ (see Proposition \ref{prop:BoundaryProjectiveLimit}).

Recall from Definition \ref{defn:RegularPaths}, $E^*_\sng$ and $E^*_\reg$ denote the set of paths whose source is a singular or regular vertex, respectively.  Define
\[ \partial E_n = E^0_\text{sng} \cup \cdots \cup E^{n-1}_\sng \cup E^n \quad \text{and} \quad \partial E = E^*_\sng \cup E^\infty. \]
For $S \subseteq E^*$ and $n \in \mathbb{N}$, define
\[ Z_n(S) = \{ \alpha \in  \partial E_n : r(\alpha) \in S \text{ or } \alpha_1 \cdots \alpha_k \in S \text{ for some  } 1 \leq k \leq n \} \subseteq \partial E_n \]
and define $Z(S) \subseteq \partial E$ similarly.

The spaces $\partial E$ and $\partial E_n$ carry natural locally compact Hausdorff topologies (Proposition \ref{prop:BoundaryIsLCH}).  This was shown by Yeend in \cite{Yeend1} for higher rank topological graphs using a different description of $\partial E$.  That our definition agrees with Yeend's is Lemma 4.5 in \cite{KumjianLi}.  The result for $\partial E_n$ has an identical proof.  We give a careful description of these topologies and a proof that they are locally compact and Hausdorff since the proof is not easy to extract from \cite{Yeend1} and the details are omitted in \cite{KumjianLi}.

First we need a topological lemma.  The proof is an easy exercise in constructing subnets.

\begin{lemma}\label{lem:subnet}
Suppose $X$ is a topological space, $(x_i)_{i \in I}$ is a net in $X$, and $x \in X$.  If for every neighborhood $U$ of $x$, the set $\{ i \in I : x_i \in U \}$ is cofinal in $I$, then $(x_i)$ has a subnet converging to $x$.
\end{lemma}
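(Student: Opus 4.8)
The plan is to recognize this as the standard fact that a \emph{cluster point} of a net admits a convergent subnet: the hypothesis says precisely that every neighborhood of $x$ contains $x_i$ for cofinally many $i$, i.e.\ that $x$ is a cluster point of $(x_i)$. First I would build the directed set that indexes the subnet. Let $\mathcal{N}(x)$ denote the collection of neighborhoods of $x$, and set
\[ J = \{ (i, U) \in I \times \mathcal{N}(x) : x_i \in U \}, \]
ordered by $(i, U) \preceq (i', U')$ if and only if $i \leq i'$ in $I$ and $U' \subseteq U$. The subnet will be $(x_{\varphi(j)})_{j \in J}$, where $\varphi : J \rightarrow I$ is the projection $\varphi(i, U) = i$.

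The crucial step, and the only place the hypothesis is used, is to verify that $J$ is directed. Given $(i_1, U_1), (i_2, U_2) \in J$, I would first choose $i_3 \in I$ dominating both $i_1$ and $i_2$ (possible since $I$ is directed) and set $U_3 = U_1 \cap U_2 \in \mathcal{N}(x)$. By hypothesis the set $\{ i \in I : x_i \in U_3 \}$ is cofinal, so there is some $i_4 \geq i_3$ with $x_{i_4} \in U_3$; then $(i_4, U_3) \in J$ is an upper bound for both $(i_1, U_1)$ and $(i_2, U_2)$. This is the main obstacle in the sense that without the cofinality assumption one could not simultaneously push the index past $i_3$ while keeping the point inside the shrunken neighborhood $U_3$, which is exactly what both directedness here and convergence below demand.

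It then remains to check that $\varphi$ gives a genuine subnet and that it converges to $x$. For the subnet condition, given $i_0 \in I$ I would use the (trivial) cofinality of $I$ in itself to find $i^* \geq i_0$; then $(i^*, X) \in J$ has the property that $\varphi(i, U) = i \geq i_0$ for every $(i, U) \succeq (i^*, X)$, so $\varphi$ is cofinal in the required sense. For convergence, I would fix a neighborhood $V$ of $x$ and again invoke the hypothesis to pick $i_V \in I$ with $x_{i_V} \in V$, so that $(i_V, V) \in J$. Any $(i, U) \succeq (i_V, V)$ then satisfies $x_{\varphi(i, U)} = x_i \in U \subseteq V$, whence the subnet is eventually in $V$. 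As $V$ was an arbitrary neighborhood of $x$, this shows $(x_{\varphi(j)})_{j \in J} \rightarrow x$ and completes the argument.
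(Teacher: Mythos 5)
Your proof is correct and is precisely the argument the paper has in mind: the paper omits the proof, calling it ``an easy exercise in constructing subnets,'' and your construction --- directing the set $J$ of pairs $(i,U)$ with $x_i \in U$, then verifying directedness, the subnet condition for the projection $\varphi$, and convergence --- is exactly that exercise carried out in full. The cofinality hypothesis is invoked correctly and in the one place it is genuinely needed (producing $(i_4, U_3)$ above both given pairs, and an admissible pair $(i_V, V)$ for each neighborhood $V$), so there is nothing to add.
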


\begin{proposition}\label{prop:BoundaryIsLCH}
For $n \in \mathbb{N}$, the sets of the form $Z_n(U) \setminus Z_n(K)$ (resp. $Z(U) \setminus Z(K)$) for open sets $U \subseteq E^*$ and compact sets $K \subseteq E^*$ form a basis for a locally compact Hausdorff topology on $\partial E_n$ (resp. $\partial E$).

Moreover, for any compact set $K \subseteq E^*$, $Z_n(K)$ is compact in $\partial E_n$ and $Z(K)$ is compact in $\partial E$.
\end{proposition}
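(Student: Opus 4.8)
The plan is to handle $\partial E_n$ and $\partial E$ in parallel, checking in turn that the proposed sets form a basis, that the topology is Hausdorff, and that it is locally compact; the last point I would deduce from the ``moreover'' clause, so the genuine work is the compactness of $Z(K)$, which I expect to be the main obstacle. Throughout, for a path $\alpha$ and $0 \le k \le |\alpha|$ I write $\alpha^{(k)} = \alpha_1 \cdots \alpha_k$ for its initial segment of length $k$, with $\alpha^{(0)} = r(\alpha)$.

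\textbf{Basis.} Since every path has $r(\alpha) \in E^0 \subseteq E^*$, we have $Z_n(E^*) = \partial E_n$ and $Z(E^*) = \partial E$, so the basic sets cover. The one substantive axiom is that the intersection of two basic sets contains a basic neighborhood of each of its points. Given $\alpha$ in $(Z_n(U_1) \setminus Z_n(K_1)) \cap (Z_n(U_2) \setminus Z_n(K_2))$, choose initial segments $\alpha^{(p)} \in U_1$ and $\alpha^{(q)} \in U_2$ with, say, $p \le q \le |\alpha|$. Writing $t \colon E^q \to E^p$ for the continuous truncation $\gamma \mapsto \gamma_1 \cdots \gamma_p$ and using that each $E^m$ is open in $E^*$, the set $W = (U_2 \cap E^q) \cap t^{-1}(U_1 \cap E^p)$ is an open neighborhood of $\alpha^{(q)}$ in $E^q$. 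Any path in $Z_n(W)$ has its length-$q$ initial segment in $W$ (only length-$q$ paths meet $W \subseteq E^q$), hence lies in both $Z_n(U_1)$ and $Z_n(U_2)$; since $Z_n(K_1 \cup K_2) = Z_n(K_1) \cup Z_n(K_2)$, the basic set $Z_n(W) \setminus Z_n(K_1 \cup K_2)$ contains $\alpha$ and sits inside the intersection. The argument for $Z$ is identical.

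\textbf{Hausdorff.} Let $\alpha \neq \beta$. If there is a least $k \le \min(|\alpha|,|\beta|)$ with $\alpha^{(k)} \neq \beta^{(k)}$, pick disjoint open $U_\alpha, U_\beta \subseteq E^k$ separating $\alpha^{(k)}, \beta^{(k)}$ (as $E^k$ is Hausdorff); since a path has a unique length-$k$ initial segment, $Z_n(U_\alpha)$ and $Z_n(U_\beta)$ are disjoint basic neighborhoods. Otherwise one path, say $\alpha$, is a proper initial segment of the other, with $\ell = |\alpha| < |\beta|$. Choosing an open $V \ni \beta^{(\ell+1)}$ in $E^{\ell+1}$ with compact closure, $Z_n(V)$ is a neighborhood of $\beta$ while $\partial E_n \setminus Z_n(\overline{V})$ is a basic neighborhood of $\alpha$ (no initial segment of $\alpha$ has length $\ell+1$), and the two are disjoint because $V \subseteq \overline{V}$ forces $Z_n(V) \subseteq Z_n(\overline{V})$.

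\textbf{Compactness of $Z(K)$.} This is the heart of the matter, and local compactness follows from it: for any $\alpha$, a compact neighborhood $Q \subseteq E^0$ of $r(\alpha)$ yields the compact neighborhood $Z(Q) \supseteq Z(\mathrm{int}\, Q) \ni \alpha$. To prove $Z(K)$ compact I would take a net $(\alpha^i)$ in $Z(K)$ and build a convergent subnet by a recursive edge-by-edge extraction. Since $K$ meets only finitely many $E^m$, after passing to a subnet all $\alpha^i$ have their distinguished segment in $K$ of a common length $k$, and by compactness of $K \cap E^k$ we may assume $\alpha^{i,(k)} \to \beta^{(k)} \in K$; any limit extending $\beta^{(k)}$ then lies in $Z(K)$. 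At a stage where $\alpha^{i,(j)} \to \beta^{(j)}$, examine the continuation edges $\alpha^i_{j+1}$, whose ranges converge to $v = s(\beta^{(j)})$: either they are confined cofinally to a compact subset of $E^1$, so a cluster value extends the limit to $\beta^{(j+1)}$, or they escape every compact set, in which case I stop. The decisive point, and the only place properness is used, is that escape cannot occur at $v \in E^0_\reg$: properness of $r$ over a compact neighborhood of $v$ inside $E^0_\reg$ confines the edges (whose ranges eventually lie in that neighborhood) to a compact set. Thus escape forces $v \in E^0_\sng$, so the finite limit $\beta^{(j)} \in E^*_\sng \subseteq \partial E$; and when the edges escape, continuity of the ``$(j{+}1)$-st edge'' maps shows every initial segment of $\alpha^i$ of length exceeding $j$ eventually leaves each compact set, which is exactly what is needed to check $\alpha^i \to \beta^{(j)}$ against a basic neighborhood $Z(U) \setminus Z(K')$.

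For $\partial E_n$ the recursion terminates after at most $n$ steps, so finitely many subnet extractions suffice. The genuine difficulty is the infinite case for $\partial E$: iterating subnet extractions indefinitely does not yield a single subnet, and since the spaces need not be second countable, sequences are unavailable. This is precisely where Lemma \ref{lem:subnet} enters: the recursion instead produces a candidate path $\delta = \beta_1 \beta_2 \cdots \in E^\infty$ together with the fact that every basic neighborhood of $\delta$ contains $\alpha^i$ cofinally often, and Lemma \ref{lem:subnet} upgrades this to an honest convergent subnet with $\delta^{(k)} = \beta^{(k)} \in K$, so $\delta \in Z(K)$. Keeping the cofinality bookkeeping coherent across all stages at once is the step I expect to demand the most care.
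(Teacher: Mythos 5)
Your basis, Hausdorff, and local-compactness arguments coincide with the paper's, and your compactness proof has the same overall skeleton as the paper's: stage-by-stage subnet extraction of initial segments in $K$, a candidate limit path built from the stages, and the cofinality criterion of Lemma \ref{lem:subnet} to convert ``every basic neighborhood is visited cofinally'' into an honest convergent subnet, which is indeed how one avoids performing infinitely many extractions at once. The genuine difference is the stopping rule of the recursion. The paper stops as soon as the current limit segment $\alpha_1\cdots\alpha_k$ satisfies $s(\alpha_k)\in E^0_\sng$ (or $k=n$) and takes that segment as the candidate limit; you stop only when the continuation edges $\alpha^i_{j+1}$ escape every compact subset of $E^1$, and you invoke properness of $r$ over $E^0_\reg$ to show that escape can only happen over a singular vertex. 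Your criterion is the more robust one, and it buys something real: a singular vertex can still receive edges, and if the continuation edges of the net are confined cofinally to a compact set, the true subnet limit extends strictly past the singular vertex. In that situation a candidate that stops at the singular vertex is not the limit of any subnet: against a basic neighborhood $Z(U)\setminus Z(L)$ whose compact set $L$ contains paths longer than the candidate, the paper's verification requires a subnet whose length-$m$ initial segments converge for an $m$ that then exceeds the candidate's length, which the early-stopped construction cannot supply. Your escape condition is exactly the statement that longer initial segments eventually leave every such $L$, which is what the final check needs; so your route closes a case that the paper's argument, as written, does not handle.

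One loose end in your sketch: at stage $j$ some indices may have $|\alpha^i| = j$, so the continuation edge is undefined, and your dichotomy does not say where these indices go. They should be folded into the escape branch, and the escape-implies-singular step then needs one further observation: if $v = s(\beta_j)$ were regular, such indices cannot be cofinal, because a finite path in $\partial E$ (or $\partial E_n$ with length $<n$) ends at a singular vertex, while $s(\alpha^i_j)\to v$ and $E^0_\reg$ is open, so eventually $s(\alpha^i_j)$ is regular and $|\alpha^i|>j$. With that line added, properness again contradicts escape over a regular vertex, and in the convergence check the length-$j$ indices are harmless since they have no initial segments of length exceeding $j$.
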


\begin{proof}
For notational convenience, we write $\partial E_\infty := \partial E$ and $Z_\infty(S) := Z(S)$ for $S \subseteq E^*$.  Fix $n \in \mathbb{N} \cup \{\infty\}$.

First we show the sets described above form a basis for $\partial E_n$.  Suppose $U, V \subseteq E^*$ are open and $K, L \subseteq E^*$ are compact and assume $\alpha \in (Z_n(U) \setminus Z_n(K)) \cup (Z_n(V) \setminus Z_n(L))$.  Then there are $k, \ell \in \mathbb{N}$ such that $k, \ell \leq |\alpha|$, $\alpha_1 \cdots \alpha_k \in U$ and $\alpha_1 \cdots \alpha_\ell \in V$ (if $k = 0$ or $\ell = 0$, these expressions are interpreted as $r(\alpha)$).  Without loss of generality, assume $k \leq \ell$.  Note that the set
\[ W := \{ \beta \in V : |\beta| \geq k \text{ and } \beta_1 \cdots \beta_k \in U \} \]
is open in $E^*$.  Also, $\alpha \in Z_n(W)$ and $Z_n(W) \subseteq Z_n(U) \cap Z_n(V)$.  Moreover, $K \cup L$ is compact and $Z_n(K) \cup Z_n(L) = Z_n(K \cup L)$.  As
\[ \alpha \in Z_n(W) \setminus Z_n(K \cup L) \subseteq ( Z_n(U) \setminus Z_n(K) ) \cap ( Z_n(V) \setminus Z_n(L) ), \]
the sets described in the statement of the proposition form a basis for $\partial E_n$.

To see the topology is Hausdorff, fix distinct paths $\alpha, \beta \in \partial E_n$.  Let $k = |\alpha|$ and $\ell = |\beta|$, and assume, without loss of generality, $k \leq \ell$.  If $k = \ell = \infty$, there is an $m \in \mathbb{N}$ such that $\alpha_1 \cdots \alpha_m \neq \beta_1 \cdots \beta_m$.  Fix disjoint open neighborhoods $U, V \subseteq E^m$ of $\alpha_1 \cdots \alpha_m$ and $\beta_1 \cdots \beta_m$, respectively.  Then $\alpha \in Z_n(U)$, $\beta \in Z_n(V)$, and $Z_n(U) \cap Z_n(V) = \emptyset$.  If $k < \infty$ and $\alpha_1 \cdots \alpha_k \neq \beta_1 \cdots \beta_k$, we can proceed as above.  The case that remains is when $k < \ell$ and $\alpha_1 \cdots \alpha_k = \beta_1 \cdots \beta_k$.  Fix a relatively compact open set $U \subseteq E^{k+1}$ containing $\beta_1 \cdots \beta_{k+1}$.  Then $Z_n(U)$ and $\partial E_n \setminus Z_n(\bar{U})$ are disjoint open sets containing $\beta$ and $\alpha$, respectively.  Hence $\partial E_n$ is Hausdorff.

Next we show $Z_n(K)$ is compact for every compact set $K \subseteq E^*$.  We may assume $K \subseteq E^k$ for some $k \in \mathbb{N}$ with $k \leq n$.  Let $(\alpha^i)$ be a net in $Z_n(K)$.  We first show there is an $\alpha \in \partial E_n$ such that for every that for every $\ell \in \mathbb{N}$ with $\ell \leq |\alpha|$, there is a subnet of $(\alpha_1^i \cdots \alpha_\ell^i)$ converging to $\alpha_1 \cdots \alpha_\ell$.  Note in particular that the choice of subnet may depend on $\ell$, but $\alpha$ does not.

Consider the net $(\alpha_1^i \cdots \alpha_k^i) \subseteq K$.  Since $K$ is compact, there is a subnet converging to a path  $\alpha_1 \cdots \alpha_k \in K$. If $k =n $ or $s(\alpha_k) \in E^0_\sng$, then $\alpha = \alpha_1 \cdots \alpha_k \in \partial E_n$ and the claim holds.  Otherwise, there is a compact neighborhood $L \subseteq E^0_\reg$ of $s(\alpha_k)$.  If $(\alpha_1^j \cdots \alpha_k^j)$ is a subnet of $(\alpha_1^i \cdots \alpha_k^i)$ converging to $\alpha_1 \cdots \alpha_k$, then for sufficiently large $j$, $s(\alpha_k^j) \in L \subseteq E^0_\reg$.  In particular, $\alpha_1^j \cdots \alpha_k^j \notin \partial E_n$ and hence $|\alpha^j| > k$.  Now, for large $j$, $\alpha^j_{k+1} \in r^{-1}(L)$.  Since $r^{-1}(L)$ is compact, there is a subnet of $\alpha^j_{k+1}$ converging to some $\alpha_{k+1}$.  Then $s(\alpha_{k+1}) = r(\alpha_k)$ by the continuity of $r$ and $s$ and hence $\alpha_1 \cdots \alpha_{k+1} \in E^{k+1}$.  Also, $(\alpha_1^i \cdots \alpha_k^i)$ has a subnet converging to $\alpha_1 \cdots \alpha_k$ by construction.  Continuing by induction yields the desired path $\alpha \in \partial E_n$.

Having constructed $\alpha$, we now show the net $(\alpha^i)$ has a subnet converging to $\alpha$.  Fix an open set $U \subseteq E^*$ and a compact set $L \subseteq E^*$ with $\alpha \in Z_n(U) \setminus Z_n(L)$.  We may assume $U \cap E^p$ and $L \cap E^p$ are empty for $p > n$.  Then for some $\ell \in \mathbb{N}$, $\alpha_1 \cdots \alpha_\ell \in U$.  As $L$ is compact, there is an integer $m$ with $\ell \leq m \leq n$ such that $E^p \cap L = \emptyset$ for $p > m$.  By the construction of $\alpha$, there is a subnet $(\alpha^j)$ of $(\alpha^i)$ such that $\alpha_1^j \cdots \alpha_m^j$ converges to $\alpha_1 \ldots \alpha_m$ in $E^m$.  Since $\ell \leq m$ and $U$ is open, for sufficiently large $j$, $\alpha_1^j \ldots \alpha_\ell^j \in U$.  Moreover, for each $0 \leq p \leq m$ and sufficiently large $j$, $\alpha_1^j \ldots \alpha_p^j \notin L$ since $L$ is closed.  In particular, $\alpha^j \in Z_n(U) \setminus Z_n(L)$ for large $j$ and hence
\[ \{ i \in I : \alpha^i \in Z_n(U) \setminus Z_n(L) \} \]
is cofinal in $I$.  By Lemma \ref{lem:subnet}, the net $(\alpha^i)$ has a subnet converging to $\alpha$.  Hence $Z_n(K)$ is compact.

In only remains to show $\partial E_n$ is locally compact.  Given any $\alpha \in \partial E_n$, choose a relatively compact open set $U \subseteq E^0$ containing $r(\alpha)$.  Then $Z_n(U)$ is an open set containing $\alpha$.  By the work above, $Z_n(\bar{U})$ is compact and $\alpha \in Z_n(U) \subseteq Z_n(\bar{U})$. This completes the proof.
\end{proof}

For each $n \geq 1$, define maps $\rho_n : \partial E_n \rightarrow \partial E_{n-1}$ by
\[ \rho_n(\alpha_1 \cdots \alpha_k) = \begin{cases} \alpha_1 \cdots \alpha_{n-1} & k = n \\ \alpha_1 \cdots \alpha_k & k < n \end{cases} \]
and define $\rho_{n, \infty} : \partial E \rightarrow \partial E_n$ similarly.

\begin{proposition}\label{prop:BoundaryProjectiveLimit}
The maps $\rho_n$ and $\rho_{n, \infty}$ are proper, continuous, and surjective.  Moreover, the maps $\rho_{\infty, n}$ induce a homeomorphism
\[ \partial E \longrightarrow \underset{\longleftarrow}{\lim} \, (\partial E_n, \rho_n). \]
\end{proposition}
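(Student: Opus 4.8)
The plan is to establish the three topological properties of the truncation maps first, and then identify the topology on $\partial E$ with the projective limit topology. Throughout I write, for $S \subseteq E^*$ and $m \in \mathbb{N}$, $S_{\le m} := S \cap (E^0 \cup \cdots \cup E^m)$; since the $E^j$ are clopen in $E^*$, the set $S_{\le m}$ is open (resp.\ compact) whenever $S$ is.

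First I would record the single computation that drives everything. For $S \subseteq E^*$ and $\alpha \in \partial E_n$, the prefixes of $\rho_n(\alpha)$ of length at most $n-1$ coincide with those of $\alpha$, and $r(\rho_n(\alpha)) = r(\alpha)$; since $Z_{n-1}(S)$ only sees prefixes of length at most $n-1$, this yields
\[ \rho_n^{-1}\big(Z_{n-1}(S)\big) = Z_n(S_{\le n-1}), \]
and similarly $\rho_{n,\infty}^{-1}(Z_n(S)) = Z(S_{\le n})$. Continuity of $\rho_n$ and $\rho_{n,\infty}$ is then immediate, since the right-hand sides are basic open (resp.\ closed) sets when $S$ is open (resp.\ compact). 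For surjectivity, a path in $\partial E_{n-1}$ of length less than $n-1$ already lies in $\partial E_n$ and is fixed by $\rho_n$; a path $\beta \in E^{n-1}$ with singular source likewise lies in $\partial E_n$; and if $s(\beta)$ is regular then $s(\beta)$ receives an edge $e$ (because $r$ maps $r^{-1}(E^0_\reg)$ onto $E^0_\reg$), so $\beta e \in E^n$ satisfies $\rho_n(\beta e) = \beta$. The same case analysis, iterated so as to produce either a path ending at a singular vertex or an infinite path, proves $\rho_{n,\infty}$ surjective. For properness I would cover a given compact set by finitely many $Z_{n-1}(U)$ with $U \subseteq E^0$ relatively compact open, so that it is contained in $Z_{n-1}(K)$ for some compact $K \subseteq E^0$; then $\rho_n^{-1}(Z_{n-1}(K)) = Z_n(K)$ is compact by Proposition~\ref{prop:BoundaryIsLCH}, and the preimage of the original compact set, being a closed subset of $Z_n(K)$, is compact as well (and likewise for $\rho_{n,\infty}$).

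For the homeomorphism, since $\rho_n \circ \rho_{n,\infty} = \rho_{n-1,\infty}$ the maps assemble into a continuous map $\Phi : \partial E \to \varprojlim(\partial E_n, \rho_n)$. I would prove injectivity by noting that two distinct boundary paths have distinct truncations at some finite level, and surjectivity by lifting a coherent sequence $(\beta^{(n)})$: if the lengths $|\beta^{(n)}|$ are bounded they stabilize to a fixed finite path with singular source (an element of $E^*_\sng$), while if they are unbounded then $|\beta^{(n)}| = n$ for all $n$ and the last edges $\beta^{(n)}_n$ concatenate to an infinite path; in either case the resulting $\alpha \in \partial E$ satisfies $\rho_{n,\infty}(\alpha) = \beta^{(n)}$.

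The main point—and the step I expect to be the real obstacle—is to check that $\Phi$ is open, equivalently that the topology on $\partial E$ is exactly the initial topology induced by the family $\{\rho_{n,\infty}\}$. One inclusion is continuity. For the other I would show each basic open set $Z(U) \setminus Z(K)$ is open in the initial topology: writing $U = \bigsqcup_j (U \cap E^j)$ gives $Z(U) = \bigcup_j \rho_{j,\infty}^{-1}(Z_j(U \cap E^j))$, a union of initial-open sets, while $K$ meets only finitely many $E^j$, so $K = K_{\le m}$ for some $m$ and $Z(K) = \rho_{m,\infty}^{-1}(Z_m(K))$ is initial-closed. Since the maps $\{\rho_{n,\infty}\}$ separate points (by injectivity of $\Phi$) and generate the topology of $\partial E$, the map $\Phi$ is an embedding; combined with the bijectivity above, it is a homeomorphism onto $\varprojlim(\partial E_n, \rho_n)$.
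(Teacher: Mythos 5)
Your proof is correct. For continuity, surjectivity, and properness it is essentially the paper's own argument: the paper uses the same pullback identity (in the form $\rho_n^{-1}(Z_{n-1}(S)) = Z_n(S)$ whenever $S \cap E^n = \emptyset$, which is equivalent to your truncated version $Z_n(S_{\le n-1})$), the same edge-extension argument at regular sources for surjectivity, and the same covering trick for properness. Where you genuinely diverge is in identifying $\partial E$ with the projective limit. The paper verifies the universal property: given continuous maps $f_n : X \rightarrow \partial E_n$ with $\rho_{n+1} f_{n+1} = f_n$, it asserts that a unique set-theoretic factorization $f : X \rightarrow \partial E$ exists and then proves $f$ is continuous using $Z(U) \setminus Z(K) = \rho_{n,\infty}^{-1}(Z_n(U) \setminus Z_n(K))$ for relatively compact open $U$ (such sets still form a basis). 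You instead build the comparison map $\Phi$ into the concrete limit inside $\prod_n \partial E_n$ and prove directly that it is a homeomorphism: injectivity from distinct truncations, surjectivity by lifting coherent sequences (bounded lengths stabilize to an element of $E^*_\sng$, unbounded lengths force $|\beta^{(n)}| = n$ and the truncations concatenate), and openness by showing the topology of $\partial E$ is the initial topology of the family $\{\rho_{n,\infty}\}$, decomposing an arbitrary open $U$ over the clopen levels $E^j$. The two routes share the cylinder-set pullback computation but distribute the work differently: the universal-property route gets the inverse map and its continuity for free from uniqueness of limits, but leaves the set-level lifting implicit (the paper's ``there is a unique function $f$'' is precisely your coherent-sequence argument, which it does not spell out); your route makes that lifting explicit, at the cost of having to prove openness of $\Phi$, and in doing so it handles arbitrary open $U$ rather than reducing to a basis of relatively compact ones. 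Both arguments are complete; yours is the more self-contained reading of the statement, the paper's the more categorical one.
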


\begin{proof}
It is easy to see the maps $\rho_n$ are surjective.  Indeed, if $\alpha \in \partial E_{n-1}$ with $s(\alpha) \in E^0_\sng$, then $\alpha$ is in the range of $\rho_n$.  Otherwise, $|\alpha| = n - 1$ and $s(\alpha) \in E^0_\reg$.  By the definition of $E^0_\reg$, there is an $e \in E^1$ with $r(e) = s(\alpha)$.  Hence $\alpha e \in E^n \subseteq \partial E_n$ and $\rho_n(\alpha e) = \alpha$.  Applying this argument by induction shows $\rho_{n, \infty}$ is surjective.

To see continuity, note that if $S \subseteq E^*$ and $S \cap E^n = \emptyset$, then $\rho_n^{-1}(Z_{n-1}(S)) = Z_n(S)$.  In particular, if $U \subseteq E^*$ is open and $K \subseteq E^*$ is compact with $U \cap E^n = K \cap E^n = \emptyset$, then $\rho_n^{-1}(Z_{n-1}(U) \setminus Z_{n-1}(K)) = Z_n(U) \setminus Z_n(K)$.  So $\rho_n$ is continuous.  If $K \subseteq \partial E_{n-1}$ is compact, there are relatively compact open sets $U_1, \ldots, U_m \subseteq E^*$ with $K \subseteq \bigcup_{i=1}^n Z_{n-1}(U_i)$.  Now,
\[ \rho_n^{-1}(K) \subseteq \bigcup_{i=1}^n \rho_n^{-1}(Z_{n-1}(U_i)) \subseteq \bigcup_{i=1}^n Z_n(\bar{U}_i). \]
As each $Z(\bar{U}_i)$ is compact by Proposition \ref{prop:BoundaryIsLCH} and $\rho_n^{-1}(K)$ is closed by the continuity of $\rho_n$, $\rho_n^{-1}(K)$ is compact.  Hence $\rho_n$ is proper.  Nearly identical arguments show $\rho_{n, \infty}$ is continuous and proper for each $n$.

It remains to show $\partial E$ is the projective limit of the $\partial E_n$.  Suppose $X$ is a topological space and $f_n : X \rightarrow \partial E_n$ are continuous maps such that $\rho_{n+1} f_{n+1} = f_n$ for each $n \geq 0$.  There is a unique function $f : X \rightarrow \partial E$ such that $\rho_{n, \infty}f = f_n$ for each $n \in \mathbb{N}$.  It suffices to verify $f$ is continuous.  Fix a relatively compact open set $U \subseteq E^*$ and a compact set $K \subseteq E^*$.  There is an $n \in \mathbb{N}$ such that $U \cap E^m$ and $K \cap E^m$ are empty for each $m > n$.  Then $Z(U) \setminus Z(K) = \rho_{n, \infty}^{-1}(Z_n(U) \setminus Z_n(K))$.  Now,
\[ f^{-1}(Z(U) \setminus Z(K)) = f^{-1}(\rho_{n, \infty}^{-1}(Z_n(U) \setminus Z_n(K))) = f_n^{-1}(Z_n(U) \setminus Z_n(K)) \]
is open by the continuity of $f_n$.  So $f$ is continuous.
\end{proof}

\begin{definition}\label{defn:BackwardsShift}
For $n \geq 1$, define the \emph{backwards shift} $\sigma_n : \partial E_n \setminus E^0 \rightarrow \partial E_{n-1}$ by
\[ \sigma(\alpha_1 \cdots \alpha_k) = \begin{cases} s(\alpha_1) & k = 1 \\ \alpha_2 \cdots \alpha_k &  k \geq 2. \end{cases} \]
and define $\sigma : \partial E \setminus E^0 \rightarrow \partial E$ similarly.
\end{definition}

Note that the maps $\sigma_n$ and $\rho_n$ satisfy the following compatibility conditions:
\[ \sigma_{n-1} \rho_n = \rho_{n-1} \sigma_n \qquad \text{ and } \qquad \sigma_n \rho_{n, \infty} = \rho_{n, \infty} \sigma. \]
on $\partial E_n \setminus E^0$ and $\partial E \setminus E^0$, respectively.
\begin{proposition}\label{prop:BackwardsShift}
The maps $\sigma_n : \partial E_n \setminus E^0 \rightarrow \partial E_{n-1}$ and $\sigma : \partial E \setminus E^0 \rightarrow \partial E$ are local homeomorphisms.
\end{proposition}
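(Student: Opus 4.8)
The plan is to handle $\sigma_n$ and $\sigma$ uniformly by writing $\partial E_\infty := \partial E$ and $\sigma_\infty := \sigma$ for $n \in \mathbb{N} \cup \{\infty\}$, as in the proof of Proposition~\ref{prop:BoundaryIsLCH}. Fix a path $\alpha$ in the domain $\partial E_n \setminus E^0$, so $|\alpha| \geq 1$ and the first edge $\alpha_1$ is defined. The whole argument is driven by the hypothesis that $s\colon E^1 \to E^0$ is a local homeomorphism: I would first choose an open neighborhood $V \subseteq E^1$ of $\alpha_1$ on which $s$ restricts to a homeomorphism $s|_V\colon V \to s(V)$ onto an open set $s(V) \subseteq E^0$, and take as candidate neighborhood $N := Z_n(V)$. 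Since $V \subseteq E^1$, unwinding the definition of $Z_n$ shows $N = \{\beta \in \partial E_n : |\beta| \geq 1 \text{ and } \beta_1 \in V\}$, which is a basic open set containing $\alpha$ and automatically contained in $\partial E_n \setminus E^0$.

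Next I would identify the image and build an explicit inverse. A direct computation gives $r(\sigma_n(\beta)) = s(\beta_1)$ for every $\beta \in N$ (including the length-one case, where $\sigma_n(\beta) = s(\beta_1) \in E^0$), so that $\sigma_n(N) = \{\gamma \in \partial E_{n-1} : r(\gamma) \in s(V)\} = Z_{n-1}(s(V))$; this is open because $s(V)$ is open and $r^{-1}(W) = Z_{n-1}(W)$ for open $W \subseteq E^0$, which also records that $r$ is continuous. Injectivity of $\sigma_n|_N$ is then immediate: if $\sigma_n(\beta) = \sigma_n(\beta')$ then $s(\beta_1) = s(\beta_1')$, so $\beta_1 = \beta_1'$ by injectivity of $s|_V$, and the shift preserves the remaining edges. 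The inverse $\iota\colon Z_{n-1}(s(V)) \to N$ prepends the unique edge of $V$ lying over a prescribed range: for $\gamma$ with $r(\gamma) \in s(V)$ put $e := (s|_V)^{-1}(r(\gamma))$ and $\iota(\gamma) := e\gamma$, interpreted as $\iota(\gamma) = e$ when $\gamma \in E^0$.

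At this point one must check that $\iota(\gamma)$ really lies in $\partial E_n$, which is where the regular/singular bookkeeping enters: if $\gamma \in E^{n-1}$ then $e\gamma \in E^n \subseteq \partial E_n$, while if $\gamma \in E^j_\sng$ with $j < n-1$ then $s(e\gamma) = s(\gamma) \in E^0_\sng$, so $e\gamma \in E^{j+1}_\sng \subseteq \partial E_n$ (for $n = \infty$ this is cleaner, since $\partial E = E^*_\sng \cup E^\infty$). That $\iota$ and $\sigma_n|_N$ are mutually inverse is then immediate from the constructions, so it remains only to prove both are continuous; once this is done, $\sigma_n|_N$ is a homeomorphism onto the open set $Z_{n-1}(s(V))$ and $\sigma_n$ is a local homeomorphism. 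Continuity would be verified directly against the basis of sets $Z(U) \setminus Z(K)$, using the continuity of the range map and of $(s|_V)^{-1}$.

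I expect the continuity of the local inverse $\iota$ to be the main obstacle. The easy-looking steps---injectivity and the formula $\sigma_n(N) = Z_{n-1}(s(V))$---are genuinely routine, but translating the operation ``prepend the unique edge $e \in V$ with $s(e) = r(\gamma)$'' into an operation on the defining data $(U,K)$ of a basic open set, and checking that the result is again of the form $Z(U') \setminus Z(K')$, is precisely where the local homeomorphism property of $s$ is needed to control the first coordinate. Confirming simultaneously that $\iota$ lands in $\partial E_n$ rather than merely in $E^*$ is the delicate part of the computation.
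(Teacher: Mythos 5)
Your set\-/theoretic skeleton is sound, and part of it mirrors the paper's own proof: the paper also takes the basic open set $Z_n(V)$ for an open $V \ni \alpha_1$ on which $s$ is injective, and uses $s(\beta_1) = r(\sigma_n(\beta))$ to get local injectivity. Your identification $\sigma_n(Z_n(V)) = Z_{n-1}(s(V))$, the construction of the prepending map $\iota$, and the regular/singular bookkeeping showing that $\iota$ lands in $\partial E_n$ rather than merely in $E^*$ are all correct. The genuine gap is that the two continuity claims --- continuity of $\sigma_n|_N$ and continuity of $\iota$ --- are exactly the analytic content of the proposition, and you defer both, explicitly calling the second ``the main obstacle'' without resolving it. As written, you have only shown that $\sigma_n$ restricts to a bijection of $N$ onto an open set; that does not establish a local homeomorphism.

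These deferred steps are not routine, and your suggested method (``verify directly against the basis $Z(U) \setminus Z(K)$'') hits a real snag on the compact part of the basis. The preimage under $\sigma_n|_N$ of a set $Z_{n-1}(U)$ is indeed easily seen to be open, since prepending $V$ to an open $U \subseteq E^k$ yields the relatively open set $(V \times U) \cap E^{k+1}$; but for $Z_{n-1}(U) \setminus Z_{n-1}(K)$ one must also handle $Z_{n-1}(K)$, and ``prepend $V$ to $K$'' is neither compact nor obviously closed, while arguing that the preimage of the closed set $Z_{n-1}(K)$ is closed presupposes the continuity you are trying to prove. The paper sidesteps this entirely: it first proves that the one\-/step shifts $\sigma'_n : E^n \rightarrow E^{n-1}$ on \emph{finite} path spaces are open and continuous, via a net argument in which the local homeomorphism property of $s$ is used to prepend edges along a convergent net --- this is precisely the missing continuity of your $\iota$, but carried out in the product topology on $E^n$, where it is easy --- and then transfers this to the boundary spaces through the identity $\sigma_n((Z_n(U)\setminus Z_n(K))\setminus E^0) = Z_{n-1}(\sigma'_n(U\setminus E^0)) \setminus Z_{n-1}(\sigma'_n(K\setminus E^0))$, which gives openness of $\sigma_n$; continuity of $\sigma_n$ then gets its own net argument. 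To complete your proof you would need to supply arguments of this kind (reducing the $\partial E_n$ topology questions to finite\-/path\-/space questions, or working with nets); without them the proposition is not established.
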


\begin{proof}
As in the proof of Proposition \ref{prop:BoundaryIsLCH}, we let $\partial E_\infty := \partial E$, $\sigma_\infty := \sigma$, and $Z_\infty(S) = Z(S)$ for all $S \subseteq E^*$.

First we claim the maps $\sigma'_1  = s: E^1 \rightarrow E^{0}$ and $\sigma_n' : E^n \rightarrow E^{n-1}$, $n \geq 2$, given by
\[ \sigma'_n(\alpha_1 \cdots \alpha_n) = \alpha_2 \cdots \alpha_n \]
are continuous and open.  Continuity is clear and the map $\sigma'_1 = s$ is open by the definition of a topological graph.  Fix $n \geq 2$ and let $U \subseteq E^n$ be open.  Fix $\alpha \in U$ and consider a net $(\beta^i) \subseteq E^{n-1}$ with $\beta^i \rightarrow \sigma'_n(\alpha)$.  It's enough to show $\beta^i \in \sigma'_n(U)$ for large $i$.  Let $V \subseteq E^1$ be an open neighborhood of $\alpha_1$ such that $s|V$ is a homeomorphism.  Then
\[ r(\beta^i) \rightarrow r(\sigma'_n(\alpha)) = s(\alpha_1) \in s(V). \]
Since $s$ is a local homeomorphism, $s(V)$ is open and hence $r(\beta^i) \in s(V)$ for large $i$.  Hence, for large $i$, there is an $e^i \in V$ with $s(e^i) = r(\beta^i)$.  As $s|V$ is a homeomorphism and $s(e^i) \rightarrow s(\alpha_1)$, we have $e^i \rightarrow \alpha_1$.  Hence $e^i\beta^i \rightarrow \alpha$.  So for sufficiently large $i$, $e^i\beta^i \in U$ and $\beta^i = \sigma_n'(e^i\beta^i) \in \sigma'_n(U)$.  This shows $\sigma'_n(U)$ is open.  Hence $\sigma'_n$ is open and continuous.

Let $\sigma' : E^* \setminus E^0 \rightarrow E^*$ denote the map induced by the $\sigma'_n$, $n \geq 1$.  Then $\sigma'$ is open and continuous.  Given an open set $U \subseteq E^*$ and a compact set $K \subseteq E^*$, and $n \in \mathbb{N} \cup \{\infty\}$, we have
\[ \sigma_n((Z_n(U) \setminus Z_n(K)) \setminus E^0) = Z_{n-1}(\sigma'_n(U \setminus E^0)) \setminus Z_{n-1}(\sigma'_n(K \setminus E^0)). \]
As $\sigma'_n$ is open and continuous, $\sigma'_n(U \setminus E^0) \subseteq E^*$ is open and $\sigma'_n(K \setminus E^0) \subseteq E^*$ is compact.  Therefore, $\sigma_n$ is open.

To show continuity, suppose $\alpha^i \in \partial E \setminus E^0$ is a net converging to $\alpha \in \partial E \setminus E^0$.  Let $U \subseteq E^*$ be an open set and $K \subseteq E^*$ be a compact set with $\sigma_n(\alpha) \in Z_{n-1}(U) \setminus Z_{n-1}(K)$.  For some $m \geq 1$, $|\alpha| \geq m$ and $\alpha_2 \cdots \alpha_m \in U$ (where this expression is interpreted as $s(\alpha_1)$ when $m = 1$).  It follows that $|\alpha^i| \geq m$ for all large $i$ and $\alpha_2^i \cdots \alpha_m^i \rightarrow \alpha_2 \cdots \alpha_m$ in $E^{m-1}$.  Hence $\sigma_n(\alpha^i) \in Z_{n-1}(U)$ for sufficiently large $i$.  A similar argument shows $\sigma_n(\alpha^i) \notin Z_{n-1}(K)$ for sufficiently large $i$ and hence $\sigma_n(\alpha^i) \rightarrow \sigma_n(\alpha)$ and $\sigma$ is continuous.

Having shown $\sigma_n$ is open and continuous, it only remains to show $\sigma$ is locally injective.  Fix $\alpha \in \partial E_n \setminus E^0$.  Let $V \subseteq E^1$ be an open neighborhood of $\alpha_1$ such that $s|V$ is injective.  Then $Z_n(V)$ is an open neighborhood of $\alpha \in \partial E_n \setminus E^0$.  Given $\beta, \beta' \in Z(V)$ with $\sigma_n(\beta) = \sigma_n(\beta')$, we have $\beta_1, \beta_1' \in V$ and
$s(\beta_1) = r(\sigma_n(\beta)) = r(\sigma_n(\beta')) = s(\beta'_1)$.  So $\beta_1 = \beta_1'$.  Now,
$\beta = \beta_1 \sigma_n(\beta) = \beta_1'\sigma_n(\beta') = \beta'$.
\end{proof}

\section{Invariant Measures on Topological Graphs}

Our goal is to describe the tracial states on a topological graph algebra $\cstar(E)$.  Note that any tracial state on $\cstar(E)$ yields a Radon probability measure on $E^0$ by composing the tracial state with the canonical map $\pi^0 : C_0(E^0) \rightarrow \cstar(E)$ and applying the Reisz Representation Theorem.  Definition \ref{defn:InvariantMeasure} below describes all measures on $E^0$ which arise in this way (see Theorem \ref{thm:InvariantTrace}).  We first introduce some notation.

Suppose $X$ and $Y$ are locally compact Hausdorff spaces and $\varphi : X \rightarrow Y$ is a local homeomorphism.  Given $f \in C_c(X)$, the function
\[ Y \rightarrow \mathbb{C} \, : \, y \mapsto \sum_{x \in \varphi^{-1}(y)} f(x) \]
is continuous and has compact support by Lemma \ref{lem:LocalHomeomorphism}.  Given a Radon measure $\mu$ on $Y$, let $\varphi^*\mu$ denote the unique Radon measure on $X$ satisfying
\[ \int_X f \, d\varphi^*\mu = \int_Y \sum_{x \in \varphi^{-1}(y)} f(x) \, d\mu(y) \]
for all $f \in C_c(X)$.

In particular, note that if $E$ is a topological graph, then $s : E^1 \rightarrow E^0$ is a local homeomorphism.  Thus a Radon measure $\mu$ on $E^0$ induces a Radon measure $s^*\mu$ on $E^1$.

\begin{definition}\label{defn:InvariantMeasure}
An \emph{invariant measure} $\mu$ on $E$ is a Radon probability measure $\mu$ on $E^0$ such that for all positive functions $f \in C_c(E^0)$,
\begin{equation}\label{eqn:InvariantMeasure}
   \int_{E^{^1}} f \circ r \, ds^*\mu \leq \int_{E^{^0}} f \, d \mu,
\end{equation}
with equality when $\operatorname{supp}(f) \subseteq E^0_\reg$.  Let $T(E)$ denote the set of invariant measures on $E$.
\end{definition}

\begin{remark}
If $\mu$ is an invariant measure on $E$, then $s^*\mu$ has mass at most 1.  To see this, note that \eqref{eqn:InvariantMeasure} holds for all positive $f \in C_b(E^0)$ by the Monotone Convergence Theorem.  Then taking $f = 1$ in \eqref{eqn:InvariantMeasure} shows $(s^*\mu)(E^1) \leq \mu(E^0) = 1$.  The measure $s^*\mu$ will not have mass 1 in general.  It is easy to construct examples of this even with finite graphs.
\end{remark}

We end this section by showing invariant measures on a topological graph $E$ induce measures on the boundary path space $\partial E$ satisfying a natural invariance condition.  First we need a measure theoretic lemma.

\begin{lemma}
Let $\mu$ be an invariant measure on $E$.  For any function $f \in C_b(E^0_\reg)$,
\begin{equation}\label{eqn:InvariantMeasureRegular}
  \int_{r^{^{-1}}(E^{^0}_\reg)} f \circ r \, ds^*\mu = \int_{E^{^0}_\reg} f \, d\mu,
\end{equation}
and for any positive function $f \in C_c(E^0_\sng)$,
\begin{equation}\label{eqn:InvariantMeasureSingular}
  \int_{r^{^{-1}}(E^{^0}_\sng)} f \circ r \, ds^*\mu \leq \int_{E^{^0}_\sng} f \, d \mu.
\end{equation}
\end{lemma}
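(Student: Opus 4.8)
The plan is to prove the two displayed formulas directly from the defining inequality \eqref{eqn:InvariantMeasure} of an invariant measure, exploiting the fact that $E^0 = E^0_\reg \sqcup E^0_\sng$ with $E^0_\reg$ open and $E^0_\sng$ closed. The key structural observation is that $s : E^1 \to E^0$ is a local homeomorphism, so $s^{-1}(E^0_\reg) = r^{-1}(E^0_\reg)$ need not hold---rather, the partition of $E^0$ pulls back along $r$ to partition $E^1$ into $r^{-1}(E^0_\reg)$ and $r^{-1}(E^0_\sng)$, and for a function $f$ on $E^0$ the pullback $f \circ r$ is supported on the corresponding piece of $E^1$. First I would establish \eqref{eqn:InvariantMeasureRegular} for $f \in C_c(E^0_\reg)$ positive, then bootstrap to bounded $f$, and finally derive \eqref{eqn:InvariantMeasureSingular} by a subtraction argument.

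**Proving the regular identity \eqref{eqn:InvariantMeasureRegular}.**

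For a positive $f \in C_c(E^0_\reg)$, extend $f$ by zero to a function in $C_c(E^0)$ with $\operatorname{supp}(f) \subseteq E^0_\reg$; the equality clause in Definition \ref{defn:InvariantMeasure} gives $\int_{E^1} f \circ r \, ds^*\mu = \int_{E^0} f \, d\mu$. Since $f$ is supported in $E^0_\reg$, the right side equals $\int_{E^0_\reg} f \, d\mu$, and $f \circ r$ vanishes off $r^{-1}(E^0_\reg)$, so the left side equals $\int_{r^{-1}(E^0_\reg)} f \circ r \, ds^*\mu$. This gives \eqref{eqn:InvariantMeasureRegular} for positive compactly supported $f$, hence for all $f \in C_c(E^0_\reg)$ by linearity. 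To extend to $f \in C_b(E^0_\reg)$, I would first treat positive bounded $f$ by choosing an increasing net $(f_j) \subseteq C_c(E^0_\reg)$ of positive functions converging pointwise to $f$ and applying the Monotone Convergence Theorem on both sides---noting that $f_j \circ r \nearrow f \circ r$ pointwise on $r^{-1}(E^0_\reg)$ so the stated MCT applies to the net $(f_j \circ r)$ against $s^*\mu$ and to $(f_j)$ against $\mu$. General bounded $f$ then follows by decomposing into real and imaginary, positive and negative parts (using boundedness to keep everything finite, since $\mu$ is a probability measure and $s^*\mu$ has finite mass by the Remark).

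**Proving the singular inequality \eqref{eqn:InvariantMeasureSingular}.**

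For positive $f \in C_c(E^0_\sng)$, the idea is to approximate $f$ from above by a function defined on all of $E^0$ and exploit the inequality \eqref{eqn:InvariantMeasure} together with the already-proved equality on the regular part. Since $E^0_\sng$ is closed and $f \in C_c(E^0_\sng)$, I would extend $f$ to $\tilde f \in C_c(E^0)$ (Tietze extension, keeping $\tilde f \geq 0$ and bounded). Applying \eqref{eqn:InvariantMeasure} to $\tilde f$ gives $\int_{E^1} \tilde f \circ r \, ds^*\mu \leq \int_{E^0} \tilde f \, d\mu$. Splitting both integrals along the partition $E^0 = E^0_\reg \sqcup E^0_\sng$ (equivalently $E^1 = r^{-1}(E^0_\reg) \sqcup r^{-1}(E^0_\sng)$) and subtracting the regular equality \eqref{eqn:InvariantMeasureRegular} applied to $\tilde f|_{E^0_\reg}$, the regular contributions cancel exactly, leaving $\int_{r^{-1}(E^0_\sng)} \tilde f \circ r \, ds^*\mu \leq \int_{E^0_\sng} \tilde f \, d\mu$. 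On $E^0_\sng$ we have $\tilde f = f$, so this is precisely \eqref{eqn:InvariantMeasureSingular}.

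**Anticipated obstacle.**

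The main delicacy is measure-theoretic bookkeeping across the partition: $E^0_\sng$ is closed (not open), so $C_c(E^0_\sng)$ consists of restrictions rather than functions extended by zero, and one must verify that restricting integrals to $r^{-1}(E^0_\sng)$ is legitimate---i.e. that $\tilde f \circ r$ restricted to $r^{-1}(E^0_\sng)$ agrees with $f \circ r$ and that the subtraction of the regular identity from the full inequality produces no sign or support issues. The cancellation in the subtraction step works precisely because \eqref{eqn:InvariantMeasureRegular} is an \emph{equality} (valid for bounded functions, which is why extending it to $C_b(E^0_\reg)$ in the previous step is essential), whereas only an inequality is available globally. I would be careful that the extension $\tilde f$ restricted to $E^0_\reg$ lies in $C_b(E^0_\reg)$ so that \eqref{eqn:InvariantMeasureRegular} genuinely applies; compact support of $\tilde f$ guarantees this.
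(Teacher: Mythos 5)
Your proposal is correct and follows essentially the same route as the paper's proof: the regular identity is obtained from the equality clause of Definition \ref{defn:InvariantMeasure} for positive compactly supported functions and then extended to $C_b(E^0_\reg)$ via the Monotone Convergence Theorem and linearity (using that $E^0_\reg$ is open), and the singular inequality is obtained by a positive Tietze extension to $C_c(E^0)$ followed by subtracting \eqref{eqn:InvariantMeasureRegular} (applied to the restriction, which lies in $C_b(E^0_\reg)$) from \eqref{eqn:InvariantMeasure}. The extra bookkeeping you flag---finiteness of $s^*\mu$, agreement of the pullbacks on $r^{-1}(E^0_\sng)$---is exactly what makes the subtraction legitimate, and the paper leaves it implicit.
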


\begin{proof}
We first prove \eqref{eqn:InvariantMeasureRegular}.  For any positive function $f \in C_c(E^0_\reg)$, this is an immediate consequence of Definition \ref{defn:InvariantMeasure}.  The result follows by the Monotone Convergence Theorem and the linearity of the integral using that $E^0_\reg$ is an open subset of $E^0$.

Now suppose $f \in C_c(E^0_\sng)$ is positive.  By the Tietze Extension Theorem, we may extend $f$ to a positive function, still denoted $f$, in $C_c(E^0)$.  As $f|E^0_\reg$ is a bounded, continuous function on $E^0_\reg$, \eqref{eqn:InvariantMeasureSingular} follows by subtracting \eqref{eqn:InvariantMeasureRegular} from \eqref{eqn:InvariantMeasure}.
\end{proof}

\begin{proposition}\label{prop:BoundaryMeasure}
Suppose $E$ is a topological graph and $\mu$ is an invariant measure on $E$.  There is a Radon probability measure $\tilde{\mu}$ on $\partial E$ such that \begin{align}
   \int_{\partial E} f \circ r \, d\tilde{\mu} &= \int_{E^{^0}} f \, d\mu  &\text{for each $f \in C_c(E^0)$} \label{eqn:BoundaryMeasureRange}
\intertext{and}
   \int_{\partial E \setminus E^{^0}} f \, d\sigma^*\tilde{\mu} &= \int_{\partial E} f \, d\tilde{\mu} &\text{for each $f \in C_c(\partial E \setminus E^0)$.} \label{eqn:BoundaryMeasureInvariant}
\end{align}
\end{proposition}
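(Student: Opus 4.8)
The plan is to realize $\tilde\mu$ as a projective limit of measures along the tower $\partial E = \varprojlim(\partial E_n, \rho_n)$ of Proposition~\ref{prop:BoundaryProjectiveLimit}. Since $s : E^* \to E^0$ is a local homeomorphism, so is each restriction $s|_{E^k} : E^k \to E^0$, and the pushforward construction preceding Definition~\ref{defn:InvariantMeasure} yields a Radon measure $\nu_k := (s|_{E^k})^*\mu$ on $E^k$, with $\nu_0 = \mu$. Write $p_{k+1} := \rho_{k+1}|_{E^{k+1}} : E^{k+1} \to E^k$ for the map that drops the last edge. Testing against $g \in C_c(E^k)$ and using that $\sum_{\beta \in E^k :\, s(\beta) = w} g(\beta) =: G(w) \in C_c(E^0)$, one rewrites $\int g \, d(p_{k+1})_*\nu_{k+1} = \int_{E^1} (G \circ r)\, ds^*\mu$; the equality~\eqref{eqn:InvariantMeasureRegular} then shows $((p_{k+1})_*\nu_{k+1})|_{E^k_\reg} = \nu_k|_{E^k_\reg}$, while the one-sided inequality~\eqref{eqn:InvariantMeasureSingular} shows $((p_{k+1})_*\nu_{k+1})|_{E^k_\sng} \leq \nu_k|_{E^k_\sng}$. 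This domination is what allows me to define a genuine positive measure $\mu_k^\sng := \nu_k|_{E^k_\sng} - ((p_{k+1})_*\nu_{k+1})|_{E^k_\sng}$ on $E^k_\sng$, and I regard this as the conceptual heart of the proof: the gap between the two sides of~\eqref{eqn:InvariantMeasure} at singular vertices is exactly the mass deposited on finite boundary paths.

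With these pieces I set $\tilde\mu_n := \sum_{k=0}^{n-1} \mu_k^\sng + \nu_n$ on $\partial E_n = E^0_\sng \cup \cdots \cup E^{n-1}_\sng \cup E^n$, read as a positive functional on $C_c(\partial E_n)$ with the Monotone Convergence Theorem absorbing the bookkeeping when the pieces fail to be clopen. The compatibility $(\rho_n)_*\tilde\mu_n = \tilde\mu_{n-1}$ then falls out piece by piece: $\rho_n$ is the identity on each $E^k_\sng$ with $k \leq n-1$ and equals $p_n$ on $E^n$, so $(\rho_n)_*\tilde\mu_n = \sum_{k=0}^{n-2}\mu_k^\sng + \mu_{n-1}^\sng + (p_n)_*\nu_n$, and the definition of $\mu_{n-1}^\sng$ together with $((p_n)_*\nu_n)|_{E^{n-1}_\reg} = \nu_{n-1}|_{E^{n-1}_\reg}$ collapses the last two terms to $\nu_{n-1}$. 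A short telescoping computation gives $\mu_k^\sng(E^k_\sng) = \nu_k(E^k) - \nu_{k+1}(E^{k+1})$, so that $\tilde\mu_n(\partial E_n) = \nu_0(E^0) = 1$ and each $\tilde\mu_n$ is a probability measure.

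To pass to the limit I would avoid abstract projective-limit-of-measures theorems and instead build a state directly on $C_0(\partial E)$. Properness of $\rho_{n,\infty}$ makes $\rho_{n,\infty}^* : C_0(\partial E_n) \to C_0(\partial E)$ an isometric $*$-homomorphism, and $\bigcup_n \rho_{n,\infty}^* C_0(\partial E_n)$ is an increasing union of $*$-subalgebras that separates points of $\partial E$ and vanishes nowhere, hence is dense by the locally compact Stone--Weierstrass theorem. The assignment $\rho_{n,\infty}^* g \mapsto \int g \, d\tilde\mu_n$ is well defined by the compatibility above, positive, and of norm one, so it extends to a state, i.e.\ a Radon probability measure $\tilde\mu$ on $\partial E$ with $(\rho_{n,\infty})_*\tilde\mu = \tilde\mu_n$. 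Since $\rho_{0,\infty} = r$ and $\tilde\mu_0 = \mu$, the range relation~\eqref{eqn:BoundaryMeasureRange} is immediate: $\int_{\partial E} f \circ r \, d\tilde\mu = \int_{E^0} f \, d\tilde\mu_0 = \int_{E^0} f \, d\mu$.

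For the invariance~\eqref{eqn:BoundaryMeasureInvariant} I would first prove its finite-level form $\sigma_n^*\tilde\mu_{n-1} = \tilde\mu_n|_{\partial E_n \setminus E^0}$. Prepending an edge preserves the source, hence the regular/singular dichotomy, and commutes with dropping the last edge, so a direct computation gives $\sigma^*\nu_k = \nu_{k+1}$ and $\sigma^*\mu_k^\sng = \mu_{k+1}^\sng$; summing over the pieces yields the finite-level identity. The limit passage is the fiddliest point. For a cylinder function $f = g \circ \rho_{n,\infty}$ with $g \in C_c(\partial E_n \setminus E^0)$, setting $G_n(\delta) := \sum_{\beta \in \sigma_n^{-1}(\delta)} g(\beta) \in C_c(\partial E_{n-1})$, the compatibility $\sigma_n \rho_{n,\infty} = \rho_{n-1,\infty}\sigma$ gives $\sum_{\beta \in \sigma^{-1}(\alpha)} f(\beta) = G_n(\rho_{n-1,\infty}(\alpha))$; both $\int f \, d\sigma^*\tilde\mu$ and $\int f \, d\tilde\mu$ then reduce, via $(\rho_{n-1,\infty})_*\tilde\mu = \tilde\mu_{n-1}$ and the finite-level identity, to $\int g \, d(\tilde\mu_n|_{\partial E_n \setminus E^0})$. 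As such cylinder functions are dense in $C_0(\partial E \setminus E^0)$ and both $\sigma^*\tilde\mu$ and $\tilde\mu|_{\partial E \setminus E^0}$ are finite, the two measures agree, which is~\eqref{eqn:BoundaryMeasureInvariant}. I expect the main obstacles to be exactly these two: the positivity of the correction measures $\mu_k^\sng$, where the one-sided invariance at singular vertices is indispensable, and the non-second-countable measure theory underlying both the definition of the $\tilde\mu_n$ and the final density argument.
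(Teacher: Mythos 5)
Your proposal is correct and follows essentially the same route as the paper: the paper builds the very same approximating measures $\tilde\mu_n$ on $\partial E_n$ (defined recursively by $\tilde\mu_{n+1}|_{\partial E_{n+1}\setminus E^{0}} = \sigma_{n+1}^*\tilde\mu_n$ plus the defect term on $E^0_\sng$, with positivity coming from the singular-vertex inequality), proves the $\rho_n$-compatibility, passes to the projective limit via Gelfand duality, and obtains the invariance by the same reduction to cylinder functions $g\circ\rho_{n,\infty}$. Your closed-form expression $\tilde\mu_n = \sum_{k<n}\mu_k^\sng + \nu_n$ is precisely the solved form of the paper's recursion (your finite-level invariance is the paper's defining formula, and your piece-by-piece $\rho_n$-compatibility replaces the paper's induction), so the two constructions yield literally the same measures.
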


\begin{proof}
For $n \in \mathbb{N}$, we define measure $\tilde{\mu}_n$ on $\partial E_n$ inductively as follows.  Set $\tilde{\mu}_0 = \mu$ and given $\tilde{\mu}_n$, let $\tilde{\mu}_{n+1}$ denote the Radon measure on $\partial E_{n+1}$ determined by
\begin{equation}\label{eqn:BoundaryMeasureApproximations}
\int_{\partial E_{n+1}} f \, d\tilde{\mu}_{n+1} = \int_{\partial{E}_{n+1} \setminus E^{^0}} f \, d \sigma_{n+1}^* \tilde{\mu}_n + \int_{E^{^0}_\sng} f \, d\mu - \int_{r^{^{-1}}(E^{^0}_\sng)} f \circ r \, d s^*\mu
\end{equation}
for every $f \in C_c(\partial E_{n+1})$.  Note in particular that if $f \in C_c(\partial E_{n+1})$, then $f|E^0_\sng \in C_c(E^0_\sng)$ since $E^0_\sng$ is closed in $\partial E_{n+1}$ and hence $\tilde{\mu}_{n+1}$ is positive by \eqref{eqn:InvariantMeasureSingular}.

Next we show
\begin{equation}\label{eqn:ProjectionMapsMeasurePreserving}
  \int_{\partial E_{n+1}} f \circ \rho_{n+1} \, d\tilde{\mu}_{n+1} = \int_{\partial E_n} f \, d\tilde{\mu}_n
\end{equation}
for every $f \in C_c(\partial E_n)$.  Assume first $n = 0$ and $f \in C_c(\partial E_0)$.  Then since $\rho_1(v) = v$ for all $v \in E^0_\sng$,
\begin{align*}
     \int_{\partial E_1} f \circ \rho_1 \, d\tilde{\mu}_1
  &\overset{\eqref{eqn:BoundaryMeasureApproximations}}{=} \int_{E^{^1}} f \circ r \, ds^*\mu + \int_{E^{^0}_\sng} f \, d\mu - \int_{r^{^{-1}}(E^{^0}_\sng)} f \circ r \, ds^*\mu \\
  &= \int_{r^{^-1}(E^{^0}_\reg)} f \circ r \, ds^*\mu + \int_{E^{^0}_\sng} f \, d\mu \\
  &\overset{\eqref{eqn:InvariantMeasureRegular}} = \int_{E^{^0}_\reg} f \, d \mu + \int_{E^{^0}_\sng} f\, d\mu = \int_{E^{^0}} f \, d\mu.
\end{align*}
So \eqref{eqn:ProjectionMapsMeasurePreserving} holds when $n = 0$.

Suppose $n \geq 1$ and \eqref{eqn:ProjectionMapsMeasurePreserving} holds for $n - 1$.  Let $f \in C_c(\partial E_n \setminus E^0)$ be given.  Since $\sigma_n : \partial E_n \setminus E^0 \rightarrow \partial E_{n-1}$ is a local homeomorphism, the function $g : \partial E_{n-1} \rightarrow \mathbb{C}$ defined by
\[ g(\alpha) = \sum_{\sigma_n(\beta) = \alpha} f(\beta) = \sum_{s(e) = r(\alpha)} f(e\alpha) \]
is continuous and has compact support by Lemma \ref{lem:LocalHomeomorphism}.  Now we compute
\begin{align*}
     \int_{\partial E_{n+1}} f \circ \rho_{n+1} \, d \sigma_{n+1}^* \tilde{\mu}_n
  &= \int_{\partial E_n} \sum_{s(e) = r(\alpha)} f(\rho_{n+1}(e\alpha)) \, d\tilde{\mu}_n(\alpha) \\
  &= \int_{\partial E_n} \sum_{s(e) = r(\rho_n(\alpha))} f(e\rho_n(\alpha)) \, \tilde{\mu}_n(\alpha) \\
  &= \int_{\partial E_n} g \circ \rho_n \, d \tilde{\mu}_n \overset{\eqref{eqn:ProjectionMapsMeasurePreserving}}{=} \int_{\partial E_{n-1}} g \, d\tilde{\mu}_{n-1} \\
  &= \int_{\partial E_{n-1}} \sum_{s(e) = r(\alpha)} f(e\alpha) \, d\tilde{\mu}_{n-1} = \int_{\partial E_n \setminus E^{^0}} f \, d \sigma_n^* \tilde{\mu}_{n-1}.
\end{align*}
This shows
\begin{equation}\label{eqn:ProjectionMapsMeasurePreserving'}
  \int_{\partial E_{n+1} \setminus E^{^0}} f \circ \rho_{n+1} \, d \sigma_{n+1}^* \tilde{\mu}_{n+1} = \int_{\partial E_n \setminus E^{^0}} f \, d \sigma_n^* \tilde{\mu}_{n-1}
\end{equation}
for all $f \in C_c(\partial E_n \setminus E^0)$ and hence for all $f \in C_b(\partial E_n \setminus E^0)$ by the Monotone Converge Theorem using that $\partial E_n \setminus E^0$ is an open subset of $\partial E^n$.

Now, assuming $n \geq 1$ and \eqref{eqn:ProjectionMapsMeasurePreserving} holds for $n - 1$, let $f \in C_c(\partial E_n)$.  Using \eqref{eqn:ProjectionMapsMeasurePreserving'} and the fact that $\rho_{n+1}$ restricts to the identity map on $E^0_\sng$, we have
\begin{align*}
     \int_{\partial E_{n+1}} &f \circ \rho_{n+1} \, d\tilde{\mu}_{n+1} \\
  &\overset{\eqref{eqn:BoundaryMeasureApproximations}}{=} \int_{\partial{E_{n+1} \setminus E^{^0}}} f \circ \rho_{n+1} \, \sigma_{n+1}^* \tilde{\mu}_n + \int_{E^{^0}_\sng} f \circ \rho_{n+1} \, d\mu - \int_{r^{^{-1}}(E^{^0}_\sng)} f \circ \rho_{n+1} \circ r \, d \mu \\
  &\overset{\eqref{eqn:ProjectionMapsMeasurePreserving'}}{=} \int_{\partial{E_n \setminus E^{^0}}} f \, d \sigma_n^* \tilde{\mu}_{n-1} + \int_{E^{^0}_\sng} f \, d\mu - \int_{r^{^{-1}}(E^{^0}_\sng)} f \circ r \, d\mu \\
  &\overset{\eqref{eqn:BoundaryMeasureApproximations}}{=} \int_{\partial E_n} f \, d\tilde{\mu}_n,
\end{align*}
which verifies \eqref{eqn:ProjectionMapsMeasurePreserving}.

Note that \eqref{eqn:ProjectionMapsMeasurePreserving} holds for all continuous bounded functions $f$ on $\partial E_{n+1}$, and in particular, taking $f = 1$ in \eqref{eqn:ProjectionMapsMeasurePreserving} shows $\tilde{\mu}_n$ is a probability measure for each $n \geq 0$ since $\tilde{\mu}_0 = \mu$ is.  Now, using \eqref{eqn:ProjectionMapsMeasurePreserving}, Proposition \ref{prop:BoundaryProjectiveLimit}, and Gelfand Duality, the measures $\tilde{\mu}_n$ on $\partial E_n$ induce a Radon probability measure $\tilde{\mu}$ on $\partial E$ such that
\begin{equation}\label{eqn:BoundaryMeasureProjection}
  \int_{\partial E} f \circ \rho_{\infty, n} d\tilde{\mu} = \int_{\partial E_n} f d\tilde{\mu}_n
\end{equation}
for every $f \in C_c(\partial E_n)$ and $n \in \mathbb{N}$.

It is immediate from the construction that \eqref{eqn:BoundaryMeasureRange} holds.  To show \eqref{eqn:BoundaryMeasureInvariant}, it suffices to show
\begin{equation}\label{eqn:BoundaryMeasureApproxInvariance}
  \int_{\partial E \setminus E^{^0}} f \circ \rho_{n, \infty} \, d \sigma^* \tilde{\mu} = \int_{\partial E} f \circ \rho_{n, \infty} \, d \tilde{\mu}
\end{equation}
for each $n \geq 1$ and $f \in C_c(\partial E_n \setminus E^0)$.  Calculations similar to those above yield
\[ \int_{\partial E \setminus E^{^0}} f \circ \rho_{n, \infty} \, d \sigma^* \tilde{\mu} = \int_{\partial E_n \setminus E^{^0}} f \, d \sigma_n^* \tilde{\mu}_{n-1}. \]
By the definition of $\tilde{\mu}_n$, if $f \in C_c(\partial E \setminus E^0)$, then
\[ \int_{\partial E_n \setminus E^{^0}} f \, d \sigma_n^* \tilde{\mu}_{n-1} \overset{\eqref{eqn:BoundaryMeasureApproximations}}{=} \int_{\partial E} f \, d\tilde{\mu}_n \overset{\eqref{eqn:BoundaryMeasureProjection}}{=} \int_{\partial E} f \circ \rho_{n, \infty} \, d \tilde{\mu}. \]
The last two equalities verify \eqref{eqn:BoundaryMeasureApproxInvariance} and hence \eqref{eqn:BoundaryMeasureInvariant} holds.
\end{proof}

For a vertex $v \in E^0$, write $E^k v$ for the set of paths $\alpha \in E^k$ such that $s(\alpha) = v$.  Applying \eqref{eqn:BoundaryMeasureInvariant} by induction yields the following corollary.

\begin{corollary}\label{cor:BoundaryMeasureInvariant}
Let $\mu$ be an invariant measure on $E$ and let $\tilde{\mu}$ denote the induced measure on $\partial E$ as in Proposition \ref{prop:BoundaryMeasure}.  For any integer $k \geq 1$ and $f \in C_c(\partial E \setminus \partial E_{k-1})$,
\begin{equation}\label{eqn:BoundaryMeasureInvariant'}
   \int_{\partial E}  \sum_{\beta \in E^k r(\alpha)} f(\beta\alpha) \, d\tilde{\mu}(\alpha) = \int_{\partial E} f \, d\tilde{\mu}
\end{equation}
\end{corollary}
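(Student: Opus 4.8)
The plan is to prove \eqref{eqn:BoundaryMeasureInvariant'} by induction on $k$, using the single-step invariance \eqref{eqn:BoundaryMeasureInvariant} as the engine and the fact that $\sigma$ is a local homeomorphism (Proposition \ref{prop:BackwardsShift}), together with Lemma \ref{lem:LocalHomeomorphism}, to keep the relevant sums finite, continuous, and compactly supported. Throughout I read $\partial E \setminus \partial E_{k-1}$ as the open set of boundary paths of length at least $k$; for $k = 1$ this is exactly $\partial E \setminus E^0$, the domain appearing in \eqref{eqn:BoundaryMeasureInvariant}.

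First I would dispose of the base case $k = 1$ by unwinding the definition of the pullback measure $\sigma^*\tilde{\mu}$. For $f \in C_c(\partial E \setminus E^0)$ the defining formula for $\sigma^*\tilde{\mu}$ reads
\[ \int_{\partial E \setminus E^0} f \, d\sigma^*\tilde{\mu} = \int_{\partial E} \sum_{\sigma(\beta) = \alpha} f(\beta) \, d\tilde{\mu}(\alpha), \]
and since $\sigma(\beta) = \alpha$ holds precisely when $\beta = e\alpha$ for a unique edge $e \in E^1 r(\alpha)$, the inner sum is $\sum_{\beta \in E^1 r(\alpha)} f(\beta\alpha)$. Substituting this into \eqref{eqn:BoundaryMeasureInvariant} gives \eqref{eqn:BoundaryMeasureInvariant'} for $k = 1$.

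For the inductive step I would factor a path of length $k+1$ into its first edge and its length-$k$ tail: writing $\beta = e\gamma$ with $\gamma \in E^k r(\alpha)$ and $e \in E^1 r(\gamma\alpha)$ gives
\[ \sum_{\beta \in E^{k+1} r(\alpha)} f(\beta\alpha) = \sum_{\gamma \in E^k r(\alpha)} \ \sum_{e \in E^1 r(\gamma\alpha)} f(e\gamma\alpha) = \sum_{\gamma \in E^k r(\alpha)} g(\gamma\alpha), \]
where $g(\delta) := \sum_{e \in E^1 r(\delta)} f(e\delta) = \sum_{\sigma(\beta) = \delta} f(\beta)$. By Lemma \ref{lem:LocalHomeomorphism} applied to the local homeomorphism $\sigma$, the function $g$ lies in $C_c(\partial E)$, and since $f$ is supported on paths of length at least $k+1$, the function $g$ is supported on paths of length at least $k$, so $g \in C_c(\partial E \setminus \partial E_{k-1})$. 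Applying the inductive hypothesis to $g$ and then the base case to $f$ yields
\[ \int_{\partial E} \sum_{\beta \in E^{k+1} r(\alpha)} f(\beta\alpha) \, d\tilde{\mu}(\alpha) = \int_{\partial E} g \, d\tilde{\mu} = \int_{\partial E} f \, d\tilde{\mu}, \]
which completes the induction.

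The routine but genuinely necessary bookkeeping --- and the only place I expect to slow down --- is checking that the interchanges of the finite sums with the integrals are legitimate and that the support of $g$ lands in the correct approximate boundary piece; both are controlled by the finiteness clause of Lemma \ref{lem:LocalHomeomorphism} and, if one wishes to pass from $C_c$ to $C_b$ test functions, by the Monotone Convergence Theorem. The one conceptual point worth fixing at the outset is the identification of $\partial E \setminus \partial E_{k-1}$ with the boundary paths of length at least $k$, so that the codomain of the $k$\-/fold shift matches the domain of \eqref{eqn:BoundaryMeasureInvariant'}.
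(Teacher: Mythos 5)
Your proof is correct and is exactly the argument the paper intends: the paper's entire justification is the one-line remark that the corollary follows by applying \eqref{eqn:BoundaryMeasureInvariant} inductively, which is precisely your induction on $k$ --- base case via the definition of $\sigma^*\tilde{\mu}$, inductive step via regrouping $\beta = e\gamma$ and applying the hypothesis to $g(\delta) = \sum_{\sigma(\beta')=\delta} f(\beta')$. The bookkeeping you flag also goes through as expected, since $\operatorname{supp}(g) \subseteq \sigma(\operatorname{supp}(f))$ is a compact subset of $\partial E \setminus \partial E_{k-1}$ and all sums involved are finite by Lemma \ref{lem:LocalHomeomorphism}.
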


\section{The Boundary Representation}

In this section, we construct a Hilbert $C_0(\partial E)$-module $L^2(E)$ and a faithful representation $\pi : \cstar(E) \rightarrow \mathbb{B}(L^2(E))$.  Roughly, this can be thought of as the left regular representation of the graph on it's boundary path space.  Moreover, if $\mu$ is an invariant measure on $E$, there is an induced representation $\pi_\mu$ of $\cstar(E)$ on a Hilbert space $L^2(E, \mu)$ (see Definition \ref{defn:GNSSpace}).  This representation will be used in the next section to show $\mu$ induces a tracial state on $\cstar(E)$.

The following is a standard construction in the groupoid literature (see, for example Definition 2.4 in \cite{Renault}).  Define the subset $X \subseteq \partial E \times \mathbb{Z} \times \partial E$ by
\[ X = \{ (\alpha, k - \ell, \beta) : k, \ell \in \mathbb{N}, \, \alpha, \beta \in \partial E, \, |\alpha| \geq k, \, |\beta| \geq \ell, \, \sigma^k(\alpha) = \sigma^\ell(\beta) \}. \]
Since $\sigma : \partial E \setminus E^0 \rightarrow \partial E$ is a local homeomorphism, the sets
\[ Z(U, k, \ell, V) := \{ (\alpha, k - \ell, \beta) \in X : \alpha \in U, \beta \in V, \sigma^k(\alpha) = \sigma^\ell(\beta) \} \]
where $k, \ell \in \mathbb{N}$, $U \subseteq \partial E \setminus \partial E_{k - 1}$ and $V \subseteq \partial E \setminus \partial E_{\ell - 1}$ are open, and $\sigma^k$ (resp. $\sigma^\ell$) is a homeomorphisms on $U$ (resp. $V$) form a basis for a locally compact Hausdorff topology on $X$.  Moreover, the map $d : X \rightarrow \partial E$ given by $d(\alpha, n, \beta) = \beta$ is a local homeomorphism.

\begin{remark}
After identifying $\partial E$ with the subspace $\{(\beta, 0, \beta) : \beta \in \partial E \} \subseteq X$, the space $X$ forms a locally compact, Hausdorff, amenable, \`{e}tale groupoid with unit space $\partial E$.  Moreover the \cstar-algebra generated by $X$ is precisely $\cstar(E)$ as was shown by Yeend in \cite{Yeend2} (see also \cite{KumjianLi} where the twisted analogue of this result is proven).
\end{remark}

We recall the from Definition 6.4 in \cite{KumjianLi} the characterization of convergent nets in $X$.

\begin{lemma}\label{lem:ConvergentNetsInX}
Consider a net $(\alpha^i, n^i, \beta^i) \subseteq X$ and a point $(\alpha, n, \beta) \in X$.  Find integers $k, \ell \geq 0$ such that
\begin{enumerate}
  \item $n = k - \ell$, $|\alpha| \geq k$, $|\beta| \geq \ell$, and $\sigma^k(\alpha) = \sigma^\ell(\beta)$, and
  \item If $k', \ell' \geq 0$ satisfy (1) with $k' \leq k$ and $\ell' \leq \ell$, then $k = k'$ and $\ell = \ell'$.
\end{enumerate}
Then $(\alpha^i, n^i, \beta^i)$ converges to $(\alpha, n, \beta)$ if and only if $\alpha^i \rightarrow \alpha$, $\beta^i \rightarrow \beta$, and for all sufficiently large $i$, we have $n^i = n$, $|\alpha^i| \geq k$, $|\beta^i| \geq \ell$, and $\sigma^k(\alpha^i) = \sigma^\ell(\beta^i)$.
\end{lemma}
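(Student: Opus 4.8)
The plan is to read convergence in $X$ directly off the basis $Z(U, k, \ell, V)$, anchoring everything to the minimal exponent pair attached to the limit point. Before starting I would record three standing facts. First, both coordinate projections $X \to \partial E$, sending $(\alpha, n, \beta)$ to $\alpha$ and to $\beta$, are continuous (the second is the local homeomorphism $d$; continuity of the first follows by shrinking $U$ in a basic neighborhood), and the cocycle $(\alpha, n, \beta) \mapsto n$ is locally constant, since on each $Z(U, k, \ell, V)$ the middle coordinate is identically $k - \ell$. Second, length is lower semicontinuous on $\partial E$: if $\gamma^i \to \gamma$ and $|\gamma| \geq m$, then a relatively compact open neighborhood of the initial segment $\gamma_1 \cdots \gamma_m \in E^m$ determines (via Proposition \ref{prop:BoundaryIsLCH}) a basic neighborhood of $\gamma$ consisting only of paths of length at least $m$, so $|\gamma^i| \geq m$ eventually; consequently $\partial E \setminus \partial E_{m-1}$, the set of boundary paths of length at least $m$, is open, and $\sigma^m$ is a local homeomorphism there by iterating Proposition \ref{prop:BackwardsShift}. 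Third, the pairs $(k', \ell')$ satisfying (1) for a fixed $(\alpha, n, \beta)$ are totally ordered in the product order: any two have $k' - \ell' = n$, so they differ by a common offset, and the coincidence $\sigma^{k'}(\alpha) = \sigma^{\ell'}(\beta)$ propagates under further applications of $\sigma$. Hence a unique minimal pair $(k, \ell)$ exists, it is the pair singled out by (1)--(2), and \emph{every} pair satisfying (1) dominates it: $k' = k + m$ and $\ell' = \ell + m$ for some $m \geq 0$.

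For the forward implication, suppose $(\alpha^i, n^i, \beta^i) \to (\alpha, n, \beta)$. Continuity of the projections gives $\alpha^i \to \alpha$ and $\beta^i \to \beta$, and local constancy of the cocycle gives $n^i = n$ eventually. Using the minimal pair, I would choose open sets $U \ni \alpha$ and $V \ni \beta$ inside $\partial E \setminus \partial E_{k-1}$ and $\partial E \setminus \partial E_{\ell-1}$ on which $\sigma^k$ and $\sigma^\ell$ restrict to homeomorphisms; this is possible by the standing facts, and then $Z(U, k, \ell, V)$ is a basic neighborhood of $(\alpha, n, \beta)$. Convergence forces the net eventually into $Z(U, k, \ell, V)$, and membership there is exactly the assertion that $|\alpha^i| \geq k$, $|\beta^i| \geq \ell$, and $\sigma^k(\alpha^i) = \sigma^\ell(\beta^i)$, which completes this direction.

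For the reverse implication, assume the listed conditions and take an arbitrary basic neighborhood $Z(U', k', \ell', V')$ of $(\alpha, n, \beta)$. Membership of the limit point shows $(k', \ell')$ satisfies (1), so by the third preliminary observation $k' = k + m$ and $\ell' = \ell + m$ with $m \geq 0$. Since $U'$ and $V'$ are open and contained in $\partial E \setminus \partial E_{k'-1}$ and $\partial E \setminus \partial E_{\ell'-1}$, the convergences $\alpha^i \to \alpha$ and $\beta^i \to \beta$ give $\alpha^i \in U'$, $\beta^i \in V'$, $|\alpha^i| \geq k'$, and $|\beta^i| \geq \ell'$ for large $i$; and $n^i = n = k' - \ell'$ eventually. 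Finally, applying $\sigma^m$ to the hypothesis $\sigma^k(\alpha^i) = \sigma^\ell(\beta^i)$ yields $\sigma^{k'}(\alpha^i) = \sigma^{\ell'}(\beta^i)$ once the lengths are large enough. Thus the net lies in $Z(U', k', \ell', V')$ eventually, and as the neighborhood was arbitrary, $(\alpha^i, n^i, \beta^i) \to (\alpha, n, \beta)$.

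The only genuine subtlety, and the step I would treat most carefully, is the minimality bookkeeping: the exponents in an arbitrary basic neighborhood need not equal the minimal $(k, \ell)$, and the argument hinges on their only exceeding them by a common offset $m \geq 0$. This is precisely what lets me promote the fixed-exponent coincidence $\sigma^k(\alpha^i) = \sigma^\ell(\beta^i)$ supplied by the hypothesis to the variable coincidence $\sigma^{k'}(\alpha^i) = \sigma^{\ell'}(\beta^i)$ demanded by the neighborhood. Everything else is a routine unwinding of the basis for the topology on $X$ together with the lower semicontinuity of length; I would also check the degenerate cases $k = 0$ or $\ell = 0$, where $\sigma^0$ is the identity, but expect no difficulty there.
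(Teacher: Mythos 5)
Your proof is correct, but there is nothing in the paper to compare it against: the paper never proves Lemma \ref{lem:ConvergentNetsInX}, it simply recalls the statement from \cite{KumjianLi} (Lemma 6.4 there). Your argument is a sound, self-contained substitute for that citation, and all of its ingredients rest on material the paper does establish. Continuity of the two coordinate projections and local constancy of the middle coordinate follow directly from the form of the basic sets $Z(U,k,\ell,V)$; the set $\{\gamma \in \partial E : |\gamma| \geq m\} = Z(E^m)$ is open in $\partial E$, which gives your lower semicontinuity of length, and $\sigma^m$ is a local homeomorphism on it by iterating Proposition \ref{prop:BackwardsShift}; and since any two pairs $(k',\ell')$ satisfying (1) have the same difference $k'-\ell' = n$, they are comparable in the product order, so the pair singled out by (2) is the unique minimum and every admissible pair has the form $(k+m,\ell+m)$ with $m \geq 0$. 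You also correctly isolated the one genuinely delicate point: in the reverse direction an arbitrary basic neighborhood of $(\alpha,n,\beta)$ carries exponents $(k+m,\ell+m)$ rather than the minimal $(k,\ell)$, and the hypothesis $\sigma^k(\alpha^i) = \sigma^\ell(\beta^i)$ must be promoted by applying $\sigma^m$; this is legitimate precisely because $\alpha^i \in U'$ and $\beta^i \in V'$ eventually force $|\alpha^i| \geq k+m$ and $|\beta^i| \geq \ell+m$, so the shifts are defined. The forward direction, via a basic neighborhood built from the minimal pair, is likewise complete. In short: correct proof, filling in a step the paper delegates to a reference.
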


\begin{definition}
The space $C_c(X)$ is a right $C_0(\partial E)$-module with the action
\[ (f \cdot b)(\alpha, n, \beta) = f(\alpha, n, \beta) b(\beta) \]
for $f \in C_c(X)$, $b \in C_0(\partial E)$, and $(\alpha, n, \beta) \in X$.  Define a $C_0(\partial E)$-valued inner product on $C_c(X)$ by
\[ \langle f, g \rangle(\beta) = \sum_{d(x) = \beta} \overline{f(x)} g(x) \]
for each $f, g \in C_c(X)$ and $\beta \in \partial E$.  Let $L^2(E)$ denote the Hilbert $C_0(\partial E)$-module obtained by completing $C_c(X)$ with respect to the inner product above.
\end{definition}

We will construct a covariant Toeplitz representation of $E$ on $L^2(E)$.  First we isolate a few technical lemmas which will be used throughout the section.  Define
\[ X_0 := \{ (\alpha, n, \beta) \in X : |\alpha| \geq 1 \} \]
and let $d_0 : X_0 \rightarrow \partial E$ denote the restriction of $d$; that is, $d_0(\alpha, n, \beta) = \beta$.

\begin{lemma}
For any $\beta \in \partial E$,
\[ d_0^{-1}(\beta) = \{ (e\alpha, n + 1, \beta) : (\alpha, n, \beta) \in X, \, e \in E^1, \text{ and } s(e) = r(\alpha) \} \]
In particular, given a funtion $f : X_0 \rightarrow \mathbb{C}$ and $\beta \in \partial E$,
\begin{equation}\label{eqn:Reindexing}
  \sum_{x \in d_0^{-1}(\beta)} f(x) = \sum_{(\alpha, n, \beta) \in X} \sum_{s(e) = r(\alpha)} f(e\alpha, n + 1, \beta)
\end{equation}
whenever one (and hence both) sums exist.
\end{lemma}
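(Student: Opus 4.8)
The plan is to prove the displayed set equality first and then read off the summation formula \eqref{eqn:Reindexing} as a formal consequence, since the double sum on the right is exactly a sum indexed by a set that bijects onto the fibre $d_0^{-1}(\beta)$. I would argue the set equality by a double inclusion, with only the forward inclusion requiring genuine care; the reverse inclusion and the reindexing are essentially bookkeeping.

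For the inclusion $\subseteq$, I would take an arbitrary $(\gamma, m, \beta) \in d_0^{-1}(\beta)$, so $|\gamma| \geq 1$ and $(\gamma, m, \beta) \in X$, and peel off the first edge by setting $e := \gamma_1$ and $\alpha := \sigma(\gamma)$. Then $\gamma = e\alpha$ and $s(e) = r(\alpha)$ by the path condition (this includes the degenerate case $|\gamma| = 1$, where $\alpha = \sigma(\gamma) = s(e) \in E^0_\sng$ is a vertex and $r(\alpha) = \alpha = s(e)$). The substantive point is to produce a witness $(k, \ell)$ for membership of $(\gamma, m, \beta)$ in $X$ with $k \geq 1$: given any representation, if $k = 0$ then $\gamma = \sigma^\ell(\beta)$, and since $|\sigma^\ell(\beta)| = |\gamma| \geq 1$ forces $|\beta| \geq \ell + 1$, one more application of $\sigma$ yields $\sigma(\gamma) = \sigma^{\ell+1}(\beta)$, i.e.\ the representation $(1, \ell+1)$. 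Having arranged $k \geq 1$, I would use $(k-1, \ell)$ to witness $(\alpha, m-1, \beta) \in X$, via $\sigma^{k-1}(\alpha) = \sigma^{k}(\gamma) = \sigma^{\ell}(\beta)$ and $|\alpha| = |\gamma| - 1 \geq k - 1$, so that $(\gamma, m, \beta) = (e\alpha, (m-1)+1, \beta)$ lies in the right-hand set.

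For $\supseteq$, I would prepend: given $(\alpha, n, \beta) \in X$ with witness $(k, \ell)$ and $e \in E^1$ satisfying $s(e) = r(\alpha)$, the pair $(k+1, \ell)$ witnesses $(e\alpha, n+1, \beta) \in X$ through $\sigma^{k+1}(e\alpha) = \sigma^k(\alpha) = \sigma^\ell(\beta)$, and $|e\alpha| \geq 1$ places it in the domain of $d_0$ with $d_0$-image $\beta$. Finally, for \eqref{eqn:Reindexing} I would note that the assignment $\bigl((\alpha, n, \beta), e\bigr) \mapsto (e\alpha, n+1, \beta)$ is a bijection from the index set of the double sum onto $d_0^{-1}(\beta)$: surjectivity is precisely the inclusion $\subseteq$ above, and injectivity is immediate because $e$ and $\alpha$ are recovered from $e\alpha$ as its first edge and $\sigma(e\alpha)$. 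Reindexing then gives the identity, the fibres being discrete since $d$ is a local homeomorphism, and the equality holds whenever either side converges. The one non-mechanical point—hence the main obstacle—is the non-uniqueness of the pair $(k, \ell)$ representing a point of $X$: one must guarantee a representation with $k \geq 1$ before an edge can be peeled off, which is exactly where the length observation $|\sigma^\ell(\beta)| \geq 1 \Rightarrow |\beta| \geq \ell + 1$ enters.
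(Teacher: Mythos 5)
Your proof is correct, and it is worth noting that the paper itself states this lemma without any proof, treating it as routine; your argument supplies exactly the verification one would expect (peel off the first edge, prepend for the reverse inclusion, and reindex via the resulting bijection). You also correctly isolate and resolve the only genuinely non-obvious point, namely that a point of $X$ may only come presented with a witness pair $(k,\ell)$ having $k=0$, in which case the length observation $|\sigma^\ell(\beta)|\geq 1 \Rightarrow |\beta|\geq \ell+1$ lets you pass to the witness $(1,\ell+1)$ before peeling off $\gamma_1$.
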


\begin{lemma}\label{lem:ShiftLocalHomeomorphism}
The map $\tilde{\sigma} : X_0 \rightarrow X$ given by
\[ \tilde{\sigma}(\alpha, n, \beta) = (\sigma(\alpha), n - 1, \beta) \]
is a local homeomorphism.
\end{lemma}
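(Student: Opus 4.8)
The plan is to verify that $\tilde\sigma$ is continuous, open, and locally injective; a map with these three properties is a local homeomorphism, since on a neighbourhood where it is injective it restricts to a continuous open bijection onto an open set, hence a homeomorphism.

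Before anything else I would confirm $\tilde\sigma$ lands in $X$. If $(\alpha,n,\beta)\in X_0$, choose witnesses $k,\ell$ for membership in $X$; because $|\alpha|\geq 1$ forces $|\beta|\geq\ell+1$ whenever $k=0$, one may replace $(k,\ell)$ by $(k+1,\ell+1)$ and so assume $k\geq 1$. Then $(k-1,\ell)$ witnesses $(\sigma(\alpha),n-1,\beta)\in X$, using $\sigma^{k-1}(\sigma(\alpha))=\sigma^k(\alpha)=\sigma^\ell(\beta)$. The same observation shows every point of $X_0$ has a basic neighbourhood $Z(U,k,\ell,V)$ with $k\geq 1$.

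The heart of the argument is a local computation on such a $W=Z(U,k,\ell,V)$ with $k\geq 1$. The key point is to upgrade the hypothesis on $\sigma^k$ to one on $\sigma$: since $\sigma^k$ is injective on $U$, so is $\sigma$ (if $\sigma(\alpha_1)=\sigma(\alpha_2)$ then applying $\sigma^{k-1}$ gives $\sigma^k(\alpha_1)=\sigma^k(\alpha_2)$), and as $\sigma$ is a local homeomorphism by Proposition \ref{prop:BackwardsShift}, hence open, $\sigma|_U$ is a homeomorphism onto the open set $\sigma(U)$, on which $\sigma^{k-1}=\sigma^k\circ(\sigma|_U)^{-1}$ is then a homeomorphism. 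Checking the two inclusions, I would identify $\tilde\sigma(W)=Z(\sigma(U),k-1,\ell,V)$, which is again a basic open set, and observe that $\tilde\sigma|_W$ is a bijection onto it, injectivity coming from injectivity of $\sigma|_U$. This simultaneously gives local injectivity and, since the sets $W$ with $k\geq 1$ form a basis for $X_0$, openness: any open subset of $X_0$ is a union of such $W$, and $\tilde\sigma$ carries each onto the open set above. (To see these $W$ do form a basis one refines a given basic neighbourhood with $k'=0$ to one with $k=1$, $\ell=\ell'+1$, shrinking $V$ so that $\sigma^{\ell'}(V)\subseteq U$; the containment then follows from injectivity of $\sigma$ on $U$, which forces $\alpha''=\sigma^{\ell'}(\beta'')$.)

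Continuity I would obtain from the net characterisation of convergence in Lemma \ref{lem:ConvergentNetsInX}: for a convergent net in $X_0$ with minimal witnesses $(k_0,\ell_0)$ for the limit, continuity of $\sigma$ lets one transport the eventual conditions $|\alpha^i|\geq k_0$ and $\sigma^{k_0}(\alpha^i)=\sigma^{\ell_0}(\beta^i)$ across the shift to the minimal witnesses of the image point, which are $(k_0-1,\ell_0)$ when $k_0\geq 1$ and $(0,\ell_0+1)$ when $k_0=0$; both cases are immediate. I expect the main obstacle to be precisely the two places where information about the iterate $\sigma^k$ must be converted into information about $\sigma$ itself --- namely that $\sigma^k$ injective on $U$ yields $\sigma$ injective on $U$ with $\sigma^{k-1}$ a homeomorphism on $\sigma(U)$, together with the refinement showing the $k\geq 1$ sets are a basis. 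The continuity check is then a routine application of Lemma \ref{lem:ConvergentNetsInX}.
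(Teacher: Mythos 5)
Your proposal is correct and takes essentially the same route as the paper's proof: continuity via the net characterization in Lemma \ref{lem:ConvergentNetsInX}, and openness together with local injectivity by computing the image of a basic open set $Z(U,k,\ell,V)$ under $\tilde\sigma$, using that $\sigma$ is open and locally injective (Proposition \ref{prop:BackwardsShift}). In fact your write-up is more careful than the paper's, which states the image formula with an index slip (it should be $Z(\sigma(U \setminus E^0), k-1, \ell, V)$) and tacitly ignores the basic sets with $k = 0$ that your refinement argument handles.
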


\begin{proof}
Given a net $(\alpha^i, n^i, \beta^i) \in X_0$ converging to $(\alpha, n, \beta) \in X_0$, it is easy to show $(\sigma(\alpha^i), n - 1, \beta^i)$ converges to $(\sigma(\alpha), n - 1, \beta)$ using Lemma \ref{lem:ConvergentNetsInX}.  Hence $\tilde{\sigma}$ is continuous.  Given a basic open set $Z(U, k, \ell, V)$ in $X_0$ as in the definition of the topology on $X$, we have
\[ \tilde{\sigma}(Z((U), k, \ell, V) \cap X_0) = Z(\sigma(U \setminus E^0), k, \ell, V). \]
As $\sigma$ is open, this shows $\tilde{\sigma}$ is open.  That $\tilde{\sigma}$ is locally injective follows follows from the fact that $\sigma$ is locally injective.
\end{proof}

\begin{lemma}\label{lem:ContinuityPi1}
Given $f \in C_c(X)$ and $\xi \in C_c(E^1)$, the function $g : X \rightarrow \mathbb{C}$ given by
\[ g(\alpha, n, \beta) = \begin{cases} \xi(\alpha_1) f(\sigma(\alpha), n - 1, \beta) & |\alpha| \geq 1 \\ 0 & |\alpha| = 0 \end{cases} \]
is continuous and $\operatorname{supp}(g) \subseteq X_0$ is compact.
\end{lemma}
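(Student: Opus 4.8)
The plan is to recognize $g$, on the open set $X_0$, as the product
\[ g(\alpha, n, \beta) = \xi(\alpha_1) \cdot f\bigl(\tilde{\sigma}(\alpha, n, \beta)\bigr), \]
where $\tilde{\sigma} : X_0 \to X$ is the local homeomorphism of Lemma~\ref{lem:ShiftLocalHomeomorphism}, and then to reduce the statement to two claims: (i) this formula defines a continuous function on $X_0$, and (ii) its support is a compact subset of $X_0$. Granting these, $g$ is continuous on all of $X$: it is continuous on the open set $X_0$ by (i), it vanishes identically on the open set $X \setminus \operatorname{supp}(g)$, and these two open sets cover $X$ precisely because $\operatorname{supp}(g) \subseteq X_0$ by (ii).

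For (i), let $p_1 : X_0 \to E^1$ denote the first-edge map $p_1(\alpha, n, \beta) = \alpha_1$; its continuity follows from the description of the topology on $\partial E$ in Proposition~\ref{prop:BoundaryIsLCH}, since a convergent net in $X_0$ has convergent first edges. Then $\xi \circ p_1$ is continuous, and $f \circ \tilde{\sigma}$ is continuous because $\tilde{\sigma}$ is continuous (Lemma~\ref{lem:ShiftLocalHomeomorphism}) and $f \in C_c(X)$; hence so is their product $g$.

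The substance of the lemma is (ii). Write $C = \operatorname{supp}(\xi) \subseteq E^1$ and $L = \operatorname{supp}(f) \subseteq X$, both compact, and set
\[ S = p_1^{-1}(C) \cap \tilde{\sigma}^{-1}(L) = \{ (\alpha, n, \beta) \in X_0 : \alpha_1 \in C \text{ and } \tilde{\sigma}(\alpha, n, \beta) \in L \}. \]
Then $\{ g \neq 0 \} \subseteq S$, and $S$ is closed in $X_0$, being the intersection of preimages of closed sets under the continuous maps $p_1$ and $\tilde{\sigma}$. I would show $S$ is compact; since $X$ is Hausdorff this makes $S$ closed in $X$, whence $\operatorname{supp}(g) \subseteq S \subseteq X_0$ is compact. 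To prove compactness I take an arbitrary net $(\alpha^i, n^i, \beta^i)$ in $S$ and produce a convergent subnet with limit in $X_0$ (hence in $S$, as $S$ is closed in $X_0$). Since $\tilde{\sigma}(\alpha^i, n^i, \beta^i) = (\sigma(\alpha^i), n^i - 1, \beta^i)$ lies in the compact set $L$, a subnet converges to some $(\gamma, m, \beta) \in L$; by Lemma~\ref{lem:ConvergentNetsInX} this gives $\sigma(\alpha^i) \to \gamma$, $\beta^i \to \beta$, and $n^i = m + 1$ for large $i$. Since $\alpha_1^i \in C$ with $C$ compact, a further subnet yields $\alpha_1^i \to e \in C$. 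Passing the compatibility $s(\alpha_1^i) = r(\sigma(\alpha^i))$ to the limit gives $s(e) = r(\gamma)$, so $\alpha := e\gamma$ is a genuine path with $|\alpha| \geq 1$, $\alpha_1 = e$, and $\sigma(\alpha) = \gamma$; applying $\sigma$ once more to a relation witnessing $(\gamma, m, \beta) \in X$ shows $(\alpha, m+1, \beta) \in X_0$. Finally $(\alpha^i, n^i, \beta^i) \to (\alpha, m+1, \beta)$: since $\tilde{\sigma}$ is a local homeomorphism it has a continuous local inverse near $(\alpha, m+1, \beta)$, and the convergence $\alpha_1^i \to e = \alpha_1$ selects the correct sheet, so $\tilde{\sigma}(\alpha^i, n^i, \beta^i) \to (\gamma, m, \beta) = \tilde{\sigma}(\alpha, m+1, \beta)$ forces $(\alpha^i, n^i, \beta^i) \to (\alpha, m+1, \beta)$.

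The main obstacle is this last reconciliation: the subnet along which $\sigma(\alpha^i)$ converges and the subnet along which $\alpha_1^i$ converges must be combined to recover convergence of $\alpha^i$ itself in the delicate topology of Proposition~\ref{prop:BoundaryIsLCH}, which is what the local homeomorphism property of $\tilde{\sigma}$ buys us. The hypothesis $\xi \in C_c(E^1)$ is used exactly to trap the first edges $\alpha_1^i$ in the compact set $C$; without it the preimage $\tilde{\sigma}^{-1}(L)$ need not be relatively compact, as $\tilde{\sigma}$ prepends an edge and the fibre over a point ranges over a generally noncompact set of edges.
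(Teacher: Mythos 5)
Your proof is correct and follows essentially the same route as the paper's: continuity comes from continuity of the first\-/edge map and of $\tilde{\sigma}$ (Lemma \ref{lem:ShiftLocalHomeomorphism}), and compactness of the support comes from extracting subnets using compactness of $\operatorname{supp}(\xi) \subseteq E^1$ and $\operatorname{supp}(f) \subseteq X$ and gluing the limits $e$ and $\gamma$ into the path $e\gamma$, exactly as in the paper. The only cosmetic differences are that you deduce continuity at points of $X \setminus X_0$ from the support containment and an open cover (the paper checks those points directly using the open set $\partial E \setminus Z(\operatorname{supp}(\xi))$), and that you finish the convergence argument with a local inverse of $\tilde{\sigma}$ and sheet selection rather than by invoking Lemma \ref{lem:ConvergentNetsInX} directly; both variants are sound and at the same level of rigor as the paper's ``it is easy to show.''
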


\begin{proof}
First we show $g: X \rightarrow \mathbb{C}$ is continuous.  Suppose $(\alpha^i, n^i, \beta^i) \rightarrow (\alpha, n, \beta)$ in $X$.  Then, in particular, $\alpha^i \rightarrow \alpha$ in $\partial E$.  Assuming $|\alpha| \geq 1$, we have $|\alpha^i| \geq 1$ for sufficiently large $i$ since $\partial E \setminus E^0$ is open in $\partial E$.  By the continuity of $\tilde{\sigma}$ (Lemma \ref{lem:ShiftLocalHomeomorphism}), $(\sigma(\alpha^i), n^i - 1, \beta^i) \rightarrow (\sigma(\alpha), n - 1, \beta)$.  Also, $\alpha^i \rightarrow \alpha$ and by the continuity of $\rho_1$ (Lemma \ref{prop:BoundaryProjectiveLimit}), $\alpha^i_1 \rightarrow \alpha_1$.  Hence by the continuity of $\xi$ and $f$, we have
\[ g(\alpha^i, n^i, \beta^i) \rightarrow g(\alpha, n, \beta). \]
Now suppose $|\alpha| = 0$.  Since $\operatorname{supp}(\xi) \subseteq E^1$ is compact, the set $U := \partial E \setminus Z(\operatorname{supp}(\xi))$ is open.  Since $\alpha \in U$, $\alpha^i \in U$ for all sufficiently large $i$.  Thus for large $i$, either $|\alpha^i| = 0$ or $|\alpha^i| \geq 1$ and $\alpha^i_1 \notin \operatorname{supp}(\xi)$.  In either case, for large $i$,
\[ g(\alpha^i, n^i, \beta^i) = g(\alpha, n, \beta) = 0. \]
This proves the continuity of $g$.  Note that this also shows $\operatorname{supp}(g) \subseteq X_1$.

To show $g$ has compact support, suppose $((\alpha^i, n^i, \beta^i))_{i \in I}$ is a net in the support of $g$.  Then for each $i$, $|\alpha^i| \geq 1$, $\alpha_1^i \in \operatorname{supp}(\xi)$, and $(\sigma(\alpha^i), n^i - 1, \beta^i) \in \operatorname{supp}(f)$.  Hence there is a subnet $((\alpha^{i(j)}, n^{i(j)}, \beta^{i(j)}))_{j \in J}$ and points $e \in \operatorname{supp}(\xi)$ and $(\alpha', n, \beta) \in \operatorname{supp}(f)$ such that $\alpha_1^{i(j)} \rightarrow e$ and $(\sigma(\alpha^{i(j)}), n^{i(j)} - 1, \beta^{i(j)}) \rightarrow (\alpha', n, \beta)$.  Using the characterization of convergent nets in $X$ given in Lemma \ref{lem:ConvergentNetsInX}, it is easy to show $(\alpha^{i(j)}, n^{i(j)}, \beta^{i(j)})$ converges to $(e \alpha', n + 1, \beta) \in X$.  Hence $g$ has compact support.
\end{proof}

We now construct the desired representation of $\cstar(E)$.  Recall from Definition \ref{defn:GraphCorrespondence}, $H(E)$ is the completion of $C_c(E^1)$ with respect to the $C_0(E^0)$-valued inner product induced by the local homeomorphism $s : E^1 \rightarrow E^0$.  It is easily verified that there is a $\ast$-homomorphism $\pi^0 : C_0(E^0) \rightarrow \mathbb{B}(L^2(E))$ such that
\[ \pi^0(a)(f)(\alpha, n, \beta) = a(r(\alpha)) f(\alpha, n, \beta) \]
for each $a \in C_0(E^0)$, $f \in C_c(X)$, and $(\alpha, n, \beta) \in X$.  The following proposition described the linear map $\pi^1 : H(E) \rightarrow \mathbb{B}(L^2(E))$.  The covariance is checked in Proposition \ref{prop:LeftRegularRepresentation}.  The representation is shown to be faithful and gauge invariant in Theorem \ref{thm:GaugeActionOnBL2}.

\begin{proposition}
There is a contractive, linear map $\pi^1 : H(E) \rightarrow \mathbb{B}(L^2(E))$ such that
\[ \pi^1(\xi)(f)(\alpha, n, \beta) = \begin{cases} \xi(\alpha_1) f(\sigma(\alpha), n - 1, \beta) & |\alpha| \geq 1 \\ 0 & |\alpha| = 0 \end{cases} \]
for each $\xi \in C_c(E^1)$, $f \in C_c(X)$, and $(\alpha, n, \beta) \in X$.  Moreover, we have
\[ \pi^1(\xi)^*(f)(\alpha, n, \beta) = \sum_{s(e) = r(\alpha)} \overline{\xi(e)} f(e\alpha, n + 1, \beta) \]
for each $\xi \in C_c(E^1)$, $f \in C_c(X)$, and $(\alpha, n, \beta) \in X$.
\end{proposition}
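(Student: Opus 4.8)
The plan is to define $\pi^1(\xi)$ by the stated formula on the dense submodule $C_c(X) \subseteq L^2(E)$, to exhibit the second formula as a formal adjoint $S_\xi$ on $C_c(X)$, to verify the adjoint relation $\langle \pi^1(\xi)f, g\rangle = \langle f, S_\xi g\rangle$ by a reindexing computation, and then to prove contractivity so that $\pi^1(\xi)$ extends to an adjointable operator on $L^2(E)$; linearity in $\xi$ together with density of $C_c(E^1)$ in $H(E)$ finally yields the map on all of $H(E)$.

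First I would confirm that both operators preserve $C_c(X)$. That $\pi^1(\xi)f \in C_c(X)$ for $f \in C_c(X)$ is precisely the content of Lemma \ref{lem:ContinuityPi1}. For the candidate adjoint I would recognise $S_\xi$ as a pushforward along the local homeomorphism $\tilde\sigma : X_0 \to X$ of Lemma \ref{lem:ShiftLocalHomeomorphism}. Indeed $\tilde\sigma^{-1}(\alpha,n,\beta) = \{(e\alpha, n+1, \beta) : s(e) = r(\alpha)\}$, and the function $h(\gamma,m,\beta) := \overline{\xi(\gamma_1)}\, g(\gamma,m,\beta)$ lies in $C_c(X_0)$ by the same continuity and compact-support argument used for Lemma \ref{lem:ContinuityPi1} (using that $\xi$ is compactly supported on $E^1$ and that $X_0$ is open in $X$). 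Lemma \ref{lem:LocalHomeomorphism} then shows $(S_\xi g)(\alpha,n,\beta) = \sum_{\tilde\sigma(y) = (\alpha,n,\beta)} h(y)$ is continuous and compactly supported, with every sum finite.

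Next I would expand $\langle \pi^1(\xi)f, g\rangle(\beta) = \sum_{d(x) = \beta} \overline{\pi^1(\xi)f(x)}\, g(x)$; only $x = (\gamma,m,\beta)$ with $|\gamma| \geq 1$ contribute, so the sum runs over $d_0^{-1}(\beta)$, and the reindexing identity \eqref{eqn:Reindexing} with $\gamma = e\alpha$, $m = n+1$ transforms it into $\sum_{(\alpha,n,\beta)} \overline{f(\alpha,n,\beta)} \sum_{s(e)=r(\alpha)} \overline{\xi(e)}\, g(e\alpha, n+1, \beta) = \langle f, S_\xi g\rangle(\beta)$. The same reindexing applied to $\langle \pi^1(\xi)f, \pi^1(\xi)f\rangle$ collapses the inner sum over leading edges via $\sum_{s(e)=r(\alpha)} |\xi(e)|^2 = \langle \xi,\xi\rangle(r(\alpha))$, giving the identity $\langle \pi^1(\xi)f, \pi^1(\xi)f\rangle = \langle f, \pi^0(\langle\xi,\xi\rangle)f\rangle$ in $C_0(\partial E)$. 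Since $\pi^0$ is a $\ast$-homomorphism, $\pi^0(\langle\xi,\xi\rangle)$ is a positive operator of norm at most $\|\langle\xi,\xi\rangle\| = \|\xi\|^2$, hence dominated by $\|\xi\|^2$ times the identity; therefore $\langle \pi^1(\xi)f, \pi^1(\xi)f\rangle \leq \|\xi\|^2 \langle f,f\rangle$ and $\|\pi^1(\xi)f\| \leq \|\xi\|\,\|f\|$.

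This contractivity bound lets $\pi^1(\xi)$ extend to a bounded operator on $L^2(E)$; the adjoint relation of the previous step then shows it is adjointable with $\pi^1(\xi)^* = S_\xi$, which establishes the second displayed formula. Since $\xi \mapsto \pi^1(\xi)$ is visibly linear and contractive on $C_c(E^1)$, which is dense in $H(E)$, it extends uniquely to the required contractive linear map $\pi^1 : H(E) \to \mathbb{B}(L^2(E))$. I expect the only genuinely fiddly points to be the two continuity-and-compact-support verifications for $S_\xi g$ — establishing $h \in C_c(X_0)$ and applying the pushforward Lemma \ref{lem:LocalHomeomorphism} to the correct local homeomorphism — together with keeping the bookkeeping in \eqref{eqn:Reindexing} straight, since the substitution $\gamma = e\alpha$, $m = n+1$ simultaneously unfolds the leading edge and shifts the integer coordinate. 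All interchange and convergence concerns are disposed of by the finiteness of the sums guaranteed by Lemma \ref{lem:LocalHomeomorphism}.
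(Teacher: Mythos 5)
Your overall route is the same as the paper's: show $\pi^1(\xi)$ preserves $C_c(X)$ via Lemma \ref{lem:ContinuityPi1}, realize the candidate adjoint $S_\xi$ as a pushforward along the local homeomorphism $\tilde\sigma$ of Lemma \ref{lem:ShiftLocalHomeomorphism} so that Lemma \ref{lem:LocalHomeomorphism} gives $S_\xi g \in C_c(X)$, prove contractivity and the adjoint relation on $C_c(X)$ by the reindexing identity \eqref{eqn:Reindexing}, and extend by density of $C_c(E^1)$ in $H(E)$. Your choice of the function to push forward, $h(\gamma,m,\beta) = \overline{\xi(\gamma_1)}\,g(\gamma,m,\beta)$, is the correct one (it is exactly what makes $\sum_{\tilde\sigma(y)=x} h(y) = (S_\xi g)(x)$ work), and you are right that its membership in $C_c(X_0)$ needs an argument parallel to, rather than a literal citation of, Lemma \ref{lem:ContinuityPi1}.

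There is one genuine gap, at the sentence ``the adjoint relation of the previous step then shows it is adjointable with $\pi^1(\xi)^* = S_\xi$.'' In the Hilbert-module setting an adjoint must be defined on all of $L^2(E)$, and at this point of your argument $S_\xi$ is defined only on the dense submodule $C_c(X)$, with no norm bound established; since bounded module maps need not be adjointable, you cannot simply declare adjointability from a formal adjoint relation on a dense submodule. The paper closes this by a second explicit computation showing $\|S_\xi g\| \leq \|\xi\|\,\|g\|$ for $g \in C_c(X)$, mirroring the contractivity computation for $\pi^1(\xi)$. Alternatively, you can deduce the bound from what you already have: since $S_\xi g \in C_c(X)$, the adjoint relation with $f = S_\xi g$ gives $\langle S_\xi g, S_\xi g\rangle = \langle \pi^1(\xi)S_\xi g, g\rangle$, whence by Cauchy--Schwarz $\|S_\xi g\|^2 \leq \|\pi^1(\xi)S_\xi g\|\,\|g\| \leq \|\xi\|\,\|S_\xi g\|\,\|g\|$, so $\|S_\xi g\| \leq \|\xi\|\,\|g\|$. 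With that bound both operators extend to $L^2(E)$, the relation $\langle \pi^1(\xi)f, g\rangle = \langle f, S_\xi g\rangle$ extends by continuity to all $f,g \in L^2(E)$, and adjointability with $\pi^1(\xi)^* = S_\xi$ follows. With this one-line patch your proof is complete and agrees with the paper's.
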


\begin{proof}
Lemma \ref{lem:ContinuityPi1} shows $\pi^1(\xi)(f) \in C_c(X)$.  It is clear that $\pi^1(\xi) : C_c(X) \rightarrow C_c(X)$ is linear.  To show $\pi^1(\xi)$ is bounded, let $f \in C_c(X)$ be given and note that
\begin{align*}
     \|\pi^1(\xi)(f)\|^2
  &= \sup_{\beta \in \partial E} \sum_{x \in d^{-1}(\beta)} |\pi^1(\xi)(f)(x)|^2 \\
  &= \sup_\beta \sum_{(\alpha, n, \beta), |\alpha| \geq 1} |\pi^1(\xi)(f)(\alpha, n, \beta)|^2 \\
  &\overset{\eqref{eqn:Reindexing}}{=} \sup_\beta \sum_{(\alpha, n, \beta)} \sum_{s(e) = r(\alpha)} |\pi^1(\xi)(f)(e\alpha, n + 1, \beta)|^2 \\
  &= \sup_\beta \sum_{(\alpha, n, \beta)} \sum_{s(e) = r(\alpha)} |\xi(e)|^2 |f(\alpha, n, \beta)|^2 \\
  &= \sup_\beta \sum_{(\alpha, n, \beta)} \langle \xi, \xi \rangle(r(\alpha)) |f(\alpha, n, \beta)|^2 \\
  &\leq \|\xi\|^2 \sup_\beta \sum_{(\alpha, n, \beta)} |f(\alpha, n, \beta)|^2 \\ 
  &= \|\xi\|^2 \|f\|^2
\end{align*}
Hence $\pi^1(\xi)$ is contractive and extends to a contractive linear map $L^2(E) \rightarrow L^2(E)$.

It remains to show $\pi^1(\xi)$ is adjointable with the adjoint given as above.  To this end, fix $\xi \in C_c(E^1)$ and define $T : C_c(X) \rightarrow C_c(X)$ by
\[ T(f)(\alpha, n, \beta) = \sum_{s(e) = r(\alpha)} \overline{\xi(e)} f(e \alpha, n + 1, \beta). \]
we claim $T(f) \in C_c(X)$.  Define $g : X_1 \rightarrow \mathbb{C}$ by $g(\alpha, n, \beta) = \overline{\xi(e)} f(\sigma(\alpha), n - 1, \beta)$ and note that $g \in C_c(X_1)$ by Lemma \ref{lem:ContinuityPi1}.  Now, for $x \in X$,
\[ T(f)(x) = \sum_{\tilde{\sigma}(y) = x} g(y) \]
and $T(f) \in C_c(X_1)$ by Lemma \ref{lem:LocalHomeomorphism} since $\tilde{\sigma} : X_1 \rightarrow X$ is a local homeomorphism by Lemma \ref{lem:ShiftLocalHomeomorphism}.

It is clear that $T(f)$ is linear.  We show $T(f)$ is bounded and hence extends to a linear map $L^2(E) \rightarrow L^2(E)$.  Fix $f \in C_c(X)$ and note
\begin{align*}
     \|T(f)\|^2
  &= \sup_{\beta \in \partial E} \sum_{d(x) = \beta} |T(f)(x)|^2 \\
  &= \sup_\beta \sum_{(\alpha, n, \beta)} \sum_{s(e) = r(\alpha)} |\overline{\xi(e)}|^2 |f(e\alpha, n + 1, \beta)|^2 \\
  &= \sup_\beta \sum_{(\alpha, n, \beta)} \sum_{s(e) = r(\alpha)} \langle \xi, \xi \rangle(r(\alpha)) |f(e\alpha, n + 1, \beta)|^2 \\
  &\overset{\eqref{eqn:Reindexing}}{=} \sup_\beta \sum_{(\alpha, n, \beta), |\alpha| \geq 1} \langle \xi, \xi \rangle(r(\alpha))|f(\alpha, n, \beta)|^2 \\
  &\leq \|\xi\|^2 \sup_\beta \sum_{(\alpha, n, \beta)} |f(\alpha, n, \beta)|^2 \\
  &= \|\xi\|^2 \|f\|^2
\end{align*}
So $T$ is contractive as claimed.

Finally, note that for each $f, g \in C_c(X)$ and $\beta \in \partial E$,
\begin{align*}
     \langle \pi^1(\xi)(f), g \rangle(\beta)
  &= \sum_{d(x) = \beta} \overline{\pi^1(\xi)(f)(x)} g(x) \\
  &= \sum_{(\alpha, n, \beta), |\alpha| \geq 1} \overline{\xi(\alpha_1) f(\sigma(\alpha), n - 1, \beta)} g(\alpha, n, \beta) \\
  &\overset{\eqref{eqn:Reindexing}}{=} \sum_{(\alpha, n, \beta)} \sum_{s(e) = r(\alpha)} \overline{\xi(e) f(\alpha, n , \beta)} g(e \alpha, n + 1, \beta) \\
  &= \sum_{(\alpha, n, \beta)} \overline{f(\alpha, n, \beta)} \sum_{s(e) = r(\alpha)} \overline{\xi(e)} g(e \alpha, n + 1, \beta) \\
  &= \sum_{(\alpha, n, \beta)} \overline{f(\alpha, n, \beta)} T(g)(\alpha, n , \beta) \\
  &= \langle f, Tg \rangle(\beta).
\end{align*}
So $\pi^1(\xi)$ is adjointable and the adjoint is as claimed in the statement of the proposition.

It is clear that $\pi^1 : C_c(E^1) \rightarrow \mathbb{B}(L^2(E))$ is linear and $\pi^1$ is contractive by the calculations above.  So $\pi^1$ extends to a linear contractive map on $H(E)$ completing the proof.
\end{proof}

\begin{proposition}\label{prop:LeftRegularRepresentation}
The pair $(\pi^0, \pi^1)$ constructed above is a covariant representation of $E$ and hence induces a $\ast$-homomorphisms $\pi : \cstar(E) \rightarrow \mathbb{B}(L^2(E))$.
\end{proposition}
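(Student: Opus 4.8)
The plan is to verify directly that $(\pi^0, \pi^1)$ satisfies the two Toeplitz relations and the covariance condition of Definition \ref{defn:CovariantRepresentation}; the map $\pi$ then exists by the universal property defining $\cstar(E)$. All computations are carried out on the dense submodule $C_c(X) \subseteq L^2(E)$, where the formulas for $\pi^0$, $\pi^1$, and $\pi^1(\xi)^*$ are explicit.

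First I would check the two Toeplitz relations. The relation $\pi^0(a)\pi^1(\xi) = \pi^1(a\xi)$ reduces to the identity $r(\alpha_1) = r(\alpha)$ together with the definition $a \cdot \xi = (a \circ r)\xi$ of the left action, so that both sides send $f$ to $a(r(\alpha))\xi(\alpha_1) f(\sigma(\alpha), n-1, \beta)$. For the relation $\pi^1(\xi)^*\pi^1(\eta) = \pi^0(\langle \xi, \eta\rangle)$, applying the formula for $\pi^1(\xi)^*$ and then for $\pi^1(\eta)$ gives, at a point $(\alpha, n, \beta)$, the sum $\sum_{s(e) = r(\alpha)} \overline{\xi(e)}\eta(e)\, f(\alpha, n, \beta)$, using that $(e\alpha)_1 = e$ and $\sigma(e\alpha) = \alpha$; this is exactly $\langle \xi, \eta\rangle(r(\alpha))\, f(\alpha, n, \beta) = \pi^0(\langle \xi, \eta\rangle)(f)(\alpha, n, \beta)$. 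Both computations are routine bookkeeping once the reindexing identity \eqref{eqn:Reindexing} is in hand.

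The substantive step is the covariance condition $\varphi(\lambda(a \circ r)) = \pi^0(a)$ for $a \in C_0(E^0_\reg)$, where $\varphi$ is the map of Proposition \ref{prop:InducedMapOnCompacts} and $\lambda$ that of Proposition \ref{prop:LeftMultiplicationMap}. Since $r$ restricts to a proper map on $r^{-1}(E^0_\reg)$, one has $a \circ r \in C_0(E^1)$, so $\lambda(a \circ r) \in \mathbb{K}(H(E))$ by Proposition \ref{prop:LeftMultiplicationMap} and the left side makes sense. I would first treat $a \in C_c(E^0_\reg)$, for which $\zeta := a \circ r \in C_c(E^1)$. Applying Lemma \ref{lem:PartitionOfUnity} to $\zeta$ produces $\xi_i, \eta_i \in C_c(E^1)$ with $\lambda(\zeta) = \sum_i \xi_i \otimes \eta_i^*$, whence $\varphi(\lambda(\zeta)) = \sum_i \pi^1(\xi_i)\pi^1(\eta_i)^*$. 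Computing this operator on $f \in C_c(X)$ at a point $(\alpha, n, \beta)$ with $|\alpha| \geq 1$, and using $r(\sigma(\alpha)) = s(\alpha_1)$, yields $\sum_{s(e) = s(\alpha_1)} \big(\sum_i \xi_i(\alpha_1)\overline{\eta_i(e)}\big) f(e\,\sigma(\alpha), n, \beta)$. The orthogonality property (3) of Lemma \ref{lem:PartitionOfUnity} kills every term with $e \neq \alpha_1$, and property (1) collapses the surviving coefficient to $\zeta(\alpha_1) = a(r(\alpha))$, leaving $a(r(\alpha)) f(\alpha, n, \beta) = \pi^0(a)(f)(\alpha, n, \beta)$. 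For $|\alpha| = 0$ the left side vanishes by construction, while $r(\alpha) \in E^0_\sng$ forces $a(r(\alpha)) = 0$ since $a$ is extended by zero off the open set $E^0_\reg$; so both sides again agree.

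Finally, I would pass from $C_c(E^0_\reg)$ to $C_0(E^0_\reg)$ by continuity: the maps $a \mapsto a \circ r$, $\lambda$, $\varphi$, and $\pi^0$ are all norm-contractive, and $C_c(E^0_\reg)$ is dense in $C_0(E^0_\reg)$, so the identity $\varphi(\lambda(a \circ r)) = \pi^0(a)$ extends to all $a \in C_0(E^0_\reg)$. I expect the covariance computation to be the main obstacle, since it is the only place where the fine structure of Lemma \ref{lem:PartitionOfUnity} — in particular the orthogonality condition that forces the edge sum to collapse to the single term $e = \alpha_1$ — is genuinely used; the Toeplitz relations and the closing density argument are comparatively mechanical.
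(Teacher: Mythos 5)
Your proposal is correct and follows essentially the same route as the paper's proof: direct verification of the two Toeplitz relations on $C_c(X)$, followed by the covariance condition for $a \in C_c(E^0_\reg)$ via Lemma \ref{lem:PartitionOfUnity}, where the orthogonality condition collapses the edge sum to the single term $e = \alpha_1$ and the case $|\alpha| = 0$ is handled by noting $r(\alpha) \in E^0_\sng$. Your explicit closing density argument (and the remark that properness of $r$ on $r^{-1}(E^0_\reg)$ puts $a \circ r$ in $C_0(E^1)$) merely fills in steps the paper leaves implicit.
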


\begin{proof}
First note that if $a \in C_c(E^0)$, $\xi \in C_c(E^1)$, and $f \in C_c(X)$, then
\begin{align*}
  \pi^0(a) \pi^1(\xi)(f)(\alpha, n, \beta) &= a(r(\alpha)) \pi^1(\xi)(\alpha, n, \beta) = a(r(\alpha_1)) \xi(\alpha_1) f(\sigma(\alpha), n - 1, \beta) \\
  &= (a\xi)(\alpha_1) f(\sigma(\alpha), n - 1, \beta) = \pi^1(a\xi)(f)(\alpha, n, \beta)
\end{align*}
for all $(\alpha, n, \beta) \in X$ with $|\alpha| \geq 1$.  Also, when $(\alpha, n, \beta) \in X$ with $|\alpha| = 0$,
\[ \pi^0(a)\pi^1(\xi)(f)(\alpha, n, \beta) = \pi^1(a\xi)(f)(\alpha,n,\beta) = 0. \]
Hence $\pi^0(a)\pi^1(\xi) = \pi^1(a\xi)$ for all $a \in C_0(E^0)$ and $\xi \in H(E)$.

Now, for $\xi, \eta \in C_c(E^0)$, $f \in C_c(E^1)$, and $(\alpha, n, \beta) \in X$, we have
\begin{align*}
   \pi^1(\xi)^*\pi^1(\eta)(f)(\alpha, n, \beta) &= \sum_{s(e) = r(\alpha)} \overline{\xi(e)} \pi^1(\eta)(f)(e \alpha, n + 1, \beta) = \sum_{s(e) = r(\alpha)} \overline{\xi(e)} \eta(e) f(\alpha, n, \beta) \\
   &= \langle \xi, \eta \rangle(r(\alpha)) f(\alpha, n, \beta) = \pi^0(\langle \xi, \eta \rangle)(f)(\alpha, n, \beta).
\end{align*}
So $\pi^1(\xi)^*\pi^1(\eta) = \pi^0(\langle \xi, \eta \rangle)$ for all $\xi, \eta \in H(E)$.

It remains to verify covariance.  The pair $(\pi^0, \pi^1)$ induces a map $\varphi: \mathbb{K}(H(E)) \rightarrow \mathbb{B}(L^2(E))$ such that $\varphi(\xi \otimes \eta^*) = \pi^1(\xi) \pi^1(\eta)^*$, where $\xi \otimes \eta^* : H(E) \rightarrow H(E)$ is the rank one operator given by $(\xi \otimes \eta^*)(\zeta) = \xi \langle \eta, \zeta \rangle$.  Let $\lambda : C_b(E^1) \rightarrow \mathbb{B}(L^2(E))$ denote the map given by pointwise multiplication; that is, $\lambda(\xi)(\eta)(x) = \xi(x) \eta(x)$.  We need to show $\varphi(\lambda(a \circ r)) = \pi^0(a)$ for each $a \in C_0(E^0_\reg)$.

Suppose $a \in C_c(E^0_\reg)$.  Then $a \circ r \in C_c(E^1)$ and by Lemma \ref{lem:PartitionOfUnity}, there are $n \in \mathbb{N}$ and $\xi_i, \eta_i \in C_c(E^1)$ such that $\lambda(a \circ r) = \sum_{i=1}^n \xi_i \otimes \eta_i^*$, $a \circ r = \sum_{i=1}^n \xi_i \overline{\eta_i}$, and if $e, e' \in E^1$ are distinct edges with $s(e) = s(e')$, then $\xi_i(e) \eta_i(e') = 0$.  For $f \in C_c(X)$ and $(\alpha, n, \beta) \in X$ with $|\alpha| \geq 1$,
\begin{align*}
     \varphi(\lambda(a \circ r))(f)(\alpha, n, \beta)
  &= \sum_{i=1}^m \pi^1(\xi) \pi^1(\eta)^*(f)(\alpha, n, \beta) \\
  &= \sum_{i=1}^m \xi(\alpha_1) \pi^1(\eta)^*(f)(\sigma(\alpha), n - 1, \beta) \\
  &= \sum_{i=1}^m \xi(\alpha_1) \sum_{s(e) = s(\alpha_1)} \overline{\eta(e)} f(e \sigma(\alpha), n, \beta) \\
  &= \sum_{i=1}^m \sum_{s(e) = s(\alpha_1)} \xi(\alpha_1) \overline{\eta(e)} f(e\sigma(\alpha), n, \beta) \\
  &= \sum_{i=1}^m \xi(\alpha_1) \overline{\eta(\alpha_1)} f(\alpha, n, \beta) \\
  &= a(r(\alpha_1)) f(\alpha, n, \beta) = \pi^0(a)(f)(\alpha, n, \beta).
\end{align*}
When $|\alpha| = 0$, $\varphi(\lambda(a \circ r))(f)(\alpha, n, \beta) = 0$.  Also, if $|\alpha| = 0$, then $\alpha = r(\alpha) \in E^0_\sng$ and hence $a(r(\alpha)) = 0$.  So $\pi^0(a)(f)(\alpha, n, \beta) = 0$.  Therefore, $\varphi(\lambda(a)) = \pi^0(a)$.

This shows the pair $(\pi^0, \pi^1)$ is covariant and hence induces a representation $\pi : \cstar(E) \rightarrow \mathbb{B}(L^2(E))$.
\end{proof}

We next show $\pi$ is faithful.  There is a natural $\mathbb{Z}$-grading on $L^2(E)$ which induces an action of $\mathbb{T}$ on $\mathbb{B}(L^2(E))$.  From here, the faithfulness of $\pi$ will follow from the Gauge Invariant Uniqueness Theorem.  We first describe the action on $\mathbb{B}(L^2(E))$.

\begin{definition}\label{defn:GaugeActionOnBL2}
Define unitaries $U_z : L^2(E) \rightarrow L^2(E)$ by
\[ U_z(f)(\alpha, n, \beta) = z^n f(\alpha, n, \beta) \]
for $f \in C_c(X)$ and $(\alpha, n, \beta) \in X$.  Now define a strongly continuous action $\gamma'$ of $\mathbb{T}$ on $\mathbb{B}(L^2(E))$ by $\gamma'_z(T) = U_z T U_z^*$ for $T \in \mathbb{B}(L^2(E))$ and $z \in \mathbb{T}$.  The action $\gamma'$ will be called the \emph{gauge action} on $\mathbb{B}(L^2(E))$.
\end{definition}

\begin{theorem}\label{thm:GaugeActionOnBL2}
The representation $\pi : \cstar(E) \rightarrow \mathbb{B}(L^2(E))$ constructed above is a faithful and preserves the gauge actions.
\end{theorem}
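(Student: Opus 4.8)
The plan is to prove the two assertions in sequence: first that $\pi$ intertwines the gauge actions, and then to feed this together with injectivity of the coefficient map into Katsura's Gauge-Invariant Uniqueness Theorem to obtain faithfulness.

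First I would check that $\gamma_z' \circ \pi = \pi \circ \gamma_z$ for every $z \in \mathbb{T}$. Since $\cstar(E)$ is generated by $\pi^0(C_0(E^0))$ and $\pi^1(H(E))$ and the gauge action $\gamma$ on $\cstar(E)$ is determined on these generators by $\gamma_z(\pi^0(a)) = \pi^0(a)$ and $\gamma_z(\pi^1(\xi)) = z\,\pi^1(\xi)$, it suffices to verify the identity on $\pi^0(a)$ and $\pi^1(\xi)$. A direct computation on $f \in C_c(X)$, using $U_z^* = U_{\bar z}$ and the defining formula $U_z(f)(\alpha,n,\beta) = z^n f(\alpha,n,\beta)$, gives $U_z\,\pi^0(a)\,U_z^* = \pi^0(a)$, since $\pi^0(a)$ preserves the integer coordinate $n$ and the factors $z^n$ and $\bar z^n$ cancel. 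Likewise $U_z\,\pi^1(\xi)\,U_z^* = z\,\pi^1(\xi)$, because $\pi^1(\xi)$ drops the coordinate $n$ by one, leaving a surviving factor $z^n \bar z^{n-1} = z$. These match the formulas for $\gamma_z$, so $\gamma_z' \circ \pi = \pi \circ \gamma_z$ on all of $\cstar(E)$. In particular $\gamma'$ restricts to an action on $\pi(\cstar(E))$, and this restriction is point-norm continuous because $\gamma_z'(\pi(x)) = \pi(\gamma_z(x))$, $\pi$ is contractive, and $\gamma$ is strongly continuous.

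Next I would show the coefficient map $\pi^0 : C_0(E^0) \to \mathbb{B}(L^2(E))$ is injective. Given $0 \neq a \in C_0(E^0)$, choose $v \in E^0$ with $a(v) \neq 0$. The range map $r : \partial E \to E^0$ is surjective, being the $n = 0$ instance of the surjectivity of $\rho_{n,\infty}$ established in Proposition \ref{prop:BoundaryProjectiveLimit}, so there is $\beta \in \partial E$ with $r(\beta) = v$. Identifying $C_0(\partial E)$ with the functions supported on the open diagonal $\{(\gamma,0,\gamma) : \gamma \in \partial E\} \subseteq X$, I would pick $b \in C_c(\partial E)$ with $b(\beta) \neq 0$ and let $\tilde b \in C_c(X)$ be the corresponding element of $L^2(E)$. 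Since $\pi^0(a)$ scales by $a \circ r$ and preserves the diagonal support, the only point of $d^{-1}(\gamma)$ contributing to the inner product is $(\gamma,0,\gamma)$, giving $\langle \tilde b, \pi^0(a)\tilde b\rangle(\gamma) = |b(\gamma)|^2\,a(r(\gamma))$; this is nonzero at $\gamma = \beta$, so $\pi^0(a) \neq 0$.

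Finally, with $\pi^0$ injective and $\gamma'$ furnishing a compatible gauge action on $\pi(\cstar(E))$, the Gauge-Invariant Uniqueness Theorem \cite{Katsura:TGA1}, applied to the covariant representation $(\pi^0, \pi^1)$ of Proposition \ref{prop:LeftRegularRepresentation}, shows that $\pi$ is faithful. I expect the main obstacle to be the injectivity of $\pi^0$: one must know there are enough boundary paths to detect an arbitrary vertex, which is exactly where the surjectivity of $r : \partial E \to E^0$ and the construction of diagonal vectors in $L^2(E)$ come in. The gauge-equivariance step is routine bookkeeping with the phase factors $z^n$, and once both hypotheses of the uniqueness theorem are verified the conclusion follows immediately.
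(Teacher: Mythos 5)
Your proposal is correct and follows essentially the same route as the paper: verify gauge equivariance of $\pi$ directly on the generators via the unitaries $U_z$, prove $\pi^0$ is injective using the surjectivity of $r : \partial E \rightarrow E^0$ (from Proposition \ref{prop:BoundaryProjectiveLimit}) to produce a vector in $C_c(X)$ that $\pi^0(a)$ moves nontrivially, and then invoke the Gauge Invariant Uniqueness Theorem. The only cosmetic difference is that you detect $\pi^0(a) \neq 0$ through an inner product against a diagonal-supported vector, whereas the paper simply evaluates $\pi^0(a)(f)$ at the point $(\alpha, 0, \alpha)$.
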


\begin{proof}
The gauge invariance is easily verified.  Now, by the Gauge Invariance Uniqueness Theorem, to show $\pi$ is faithful, it's enough to show $\pi^0$ is faithful.  Fix a non-zero $a \in C_0(E^0)$ and fix a vertex $v \in E^0$ with $a(v) \neq 0$.  There is an $\alpha \in \partial E$ such that $r(\alpha) = v$.  Let $f \in C_c(X)$ be such that $f(\alpha, 0, \alpha) \neq 0$.  Then
\[ \pi^0(a)(f)(\alpha, 0, \alpha) = a(r(\alpha)) f(\alpha, 0, \alpha) \neq 0. \]
In particular, $\pi^0(a) \neq 0$.  So $\pi^0$ is injective and hence $\pi$ is injective.
\end{proof}

For each invariant measure $\mu$ on $E$, we construct a Hilbert space $L^2(E, \mu)$ and a representation $\pi_\mu$ on $\cstar(E)$ on $L^2(E, \mu)$.  This representation is the key to lifting the measure $\mu$ to a tracial state on $\cstar(E)$.

\begin{definition}\label{defn:GNSSpace}
Let $\mu$ denote an invariant measure on $E$.  Let $\tilde{\mu}$ denote the probability measure on $\partial E$ given by Proposition \ref{prop:BoundaryMeasure}.  There is a representation $C_0(\partial E) \rightarrow L^2(\partial E, \tilde{\mu})$ given by multiplication.  Now define
\[ L^2(E, \mu) := L^2(E) \otimes_{C_0(\partial E)} L^2(\partial E, \tilde{\mu}). \]
Then $\pi$ induces a representation $\pi_\mu : \cstar(E) \rightarrow \mathbb{B}(L^2(E))$ given by $\pi_\mu(x) = \pi(x) \otimes \operatorname{id}$ for each $x \in \cstar(E)$.
\end{definition}

Note that the gauge action $\gamma'$ of $\mathbb{T}$ on $\mathbb{B}(L^2(E))$ induces an action, also denoted $\gamma'$ of $\mathbb{T}$ on $\mathbb{B}(L^2(E, \mu))$.  Also, the representation $\pi_\mu$ is gauge invariant.

\begin{remark}
As mentioned in the introduction, the spaces $L^2(E)$ and $L^2(E, \mu)$ can be viewed as fibred objects over $\partial E$.  This will not be needed in the paper, but we give an outline of how this can be done.

For each $\beta \in \partial E$, define $H(\beta) := \ell^2(d^{-1}(\beta))$ and let $\prod_{\beta \in \partial E} H(\beta)$ denote the $\ell^\infty$-product of the spaces $H(\beta)$.  The map
\[ C_c(\partial E) \rightarrow \prod_{\beta \in \partial E} H(\beta) \qquad f \mapsto (f|d^{-1}(\beta))_{\beta \in \partial E} \]
is linear and isometric.  Hence it extends to a linear isometry on $L^2(E)$.  There is a unique continuous field of Hilbert spaces $\mathcal{H}$ over $\partial E$ in the sense of Chapter 10 of \cite{Dixmier} such that the fibre over $\beta$ is $H(\beta)$ for every $\beta \in \partial E$.  Moreover, given a vector field $\xi \in \mathcal{H}$, $\xi \in L^2(E)$ if and only if the function $\beta \mapsto \|\xi(\beta)\|$ vanishes at infinity.

If $\mu$ is an invariant measure on $E$ and $\tilde{\mu}$ is the induced measure on $\partial E$, there is a unique measurable field of Hilbert spaces $\beta \mapsto H(\beta)$ containing $\mathcal{H}$.  Moreover, there is a unitary
\[ L^2(E, \mu) \rightarrow \int^\oplus_{\partial E} H(\beta) \, d\tilde{\mu}(\beta) \]
given by restricting $f \in L^2(E, \mu)$ to the fibres of $d : X \rightarrow \partial E$ as above.  Conjugating $\pi_\mu$ by the unitary also yields a direct integral decomposition for the representation $\pi_\mu$.
\end{remark}

\section{Traces on Topological Graph Algebras}

We now show every invariant measure $\mu$ on a topological graph $E$ induces a gauge invariant tracial state on $\cstar(E)$.  The strategy is to construct of vector $\hat{u} \in L^2(E, \mu)$ such that $\langle \hat{u}, \pi_\mu(\cdot) \hat{u} \rangle$ is a tracial state on $\cstar(E)$.

Define a function $\iota : \partial E \rightarrow X$ by $\iota(\alpha) = (\alpha, 0, \alpha)$ and note that $\iota$ is a homeomorphism onto a closed and open subset of $X$, as follows easily from the characterization of convergent nets in $\partial E$ given in Lemma \ref{lem:ConvergentNetsInX}.  We will identify $\partial E$ as a subset of $X$ through the map $\iota$.

Fix an increasing net of compact sets $K_i \subseteq \partial E$ with union $\partial E$.  Let $f_i \in C_c(\partial E) \subseteq L^2(E)$ be such that $0 \leq f_i \leq 1$ and $f_i|K_i = 1$.  Define
\[ \hat{u}_i = f_i \otimes 1 \in L^2(E) \otimes_{C_0(\partial E)} L^2(\partial E, \tilde{\mu}) = L^2(E, \mu). \]
Fix $\varepsilon > 0$.  Since $\tilde{\mu}$ is a Radon measure on $\partial E$, there is $i_0$ such that $\tilde{\mu}(\partial E \setminus K_{i_0}) < \varepsilon$.  Now, for $i, j \geq i_0$, we have
\[ \| \hat{u}_i - \hat{u}_j \|^2 = \int_{\partial E} |f_i - f_j|^2 \, d\tilde{\mu} \leq \tilde{\mu}(\partial E \setminus K_{i_0}) < \varepsilon. \]
So $\hat{u}_i$ is a Cauchy net in $L^2(E, \mu)$ and hence converges to $\hat{u} \in L^2(E, \mu)$.

Note that $\hat{u}$ is a unit vector.  Hence composing the vector state defined by $\hat{u}$ with the representation $\pi_\mu : \cstar(E) \rightarrow \mathbb{B}(L^2(E, \mu))$ yields a state on $\cstar(E)$ given by
\begin{equation}\label{eqn:TraceFormula}
  \langle \hat{u}, \pi_\mu(x) \hat{u} \rangle = \lim_{i} \int_{\partial E} \langle f_i, \pi(x) f_i \rangle(\alpha) \, d\tilde{\mu}(\alpha)
\end{equation}
for any $x \in \cstar(E)$, for any net $(f_i) \subseteq C_c(\partial E)$ increasing pointwise to 1.

We will show that this state is tracial in Theorem \ref{thm:InvariantTrace}.  First we need a couple lemmas.

\begin{lemma}\label{lem:TracesOnCuntzPimsnerAlgebras}
Let $H$ be a C*-correspondence over a C*-algebra $A$.  Let $(\pi^0, \pi^1)$ denote the universal covariant representation on the Cuntz-Pimsner algebra $\mathcal{O}_A(H)$, and let $\pi^k$ denote the induced maps $H^{\otimes k} \rightarrow \mathcal{O}_A(H)$.  If $\tau$ is a gauge invariant state on $\mathcal{O}_A(H)$ such that $\tau(\pi^k(\xi)^* \pi^k(\eta)) = \tau(\pi^k(\eta) \pi^k(\xi)^*)$ for all $\xi, \eta \in H^{\otimes k}$ and $k \geq 0$, then $\tau$ is a tracial state on $\mathcal{O}_A(H)$.
\end{lemma}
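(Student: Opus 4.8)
The plan is to verify the trace identity $\tau(ab) = \tau(ba)$ only on the dense $\ast$-subalgebra spanned by the monomials $\pi^k(\xi)\pi^m(\eta)^*$ (here $\xi \in H^{\otimes k}$, $\eta \in H^{\otimes m}$, and $\pi^0$ is the given $\ast$-homomorphism $A = H^{\otimes 0} \to \mathcal{O}_A(H)$); since $(a,b) \mapsto \tau(ab) - \tau(ba)$ is bounded and separately continuous and these monomials span a dense $\ast$-subalgebra, this suffices. So I would fix $a = \pi^k(\xi)\pi^m(\eta)^*$ and $b = \pi^p(\zeta)\pi^q(\omega)^*$. Because $\gamma_z(ab) = z^{(k-m)+(p-q)}ab$ and $\tau$ is gauge invariant, $\tau(ab)$ vanishes unless $(k-m)+(p-q) = 0$; the same holds for $ba$. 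Hence I may assume $k + p = m + q$, so that both products are homogeneous of degree $0$.

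The first key step is to collapse each balanced product to a single monomial of equal upper and lower index, to which the hypothesis directly applies. Using that elementary tensors are dense and that $\tau$ and the $\pi^p$ are continuous, I may assume $\zeta = \zeta_1 \otimes \zeta_2$ and $\omega = \omega_1 \otimes \omega_2$ split according to the relevant indices. Treating first the case $m \le p$ and writing $s = p - m$, the Toeplitz relation $\pi^m(\eta)^*\pi^m(\zeta_1) = \pi^0(\langle\eta,\zeta_1\rangle)$ together with multiplicativity $\pi^k(\xi)\pi^s(\theta) = \pi^{k+s}(\xi \otimes \theta)$ gives, since $q = k+s$,
\[ ab = \pi^{q}\bigl(\xi \otimes (\langle\eta,\zeta_1\rangle\zeta_2)\bigr)\,\pi^{q}(\omega)^*. \]
A symmetric manipulation of $ba$, this time contracting $\pi^q(\omega)^*\pi^k(\xi)$, yields
\[ ba = \pi^{p}(\zeta)\,\pi^{p}\bigl(\eta \otimes (\langle\xi,\omega_1\rangle\omega_2)\bigr)^*. \]

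Next I would apply the hypothesis in the form $\tau(\pi^K(\Xi)\pi^K(\Theta)^*) = \tau(\pi^K(\Theta)^*\pi^K(\Xi)) = \tau(\pi^0(\langle\Theta,\Xi\rangle))$ to each of the two displayed monomials. This reduces the entire problem to the single scalar identity
\[ \langle \omega, \, \xi \otimes \langle\eta,\zeta_1\rangle\zeta_2\rangle = \langle \eta \otimes \langle\xi,\omega_1\rangle\omega_2, \, \zeta\rangle \quad \text{in } A, \]
which I would verify by expanding both sides with the tensor-product inner product $\langle u \otimes u', v \otimes v'\rangle = \langle u', \langle u, v\rangle v'\rangle$ and the adjointability relation $\langle c\cdot u, v\rangle = \langle u, c^* v\rangle$ for $c \in A$; both sides collapse to $\langle \omega_2, \langle\omega_1,\xi\rangle\langle\eta,\zeta_1\rangle\zeta_2\rangle$. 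Finally, the remaining case $m > p$ follows from the case already treated by the adjoint symmetry $\tau(ab) = \tau(ba) \iff \tau(b^*a^*) = \tau(a^*b^*)$: the pair $(b^*, a^*) = \bigl(\pi^q(\omega)\pi^p(\zeta)^*,\, \pi^m(\eta)\pi^k(\xi)^*\bigr)$ satisfies the index condition $p \le m$ needed to rerun the argument.

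I expect the main obstacle to be purely organizational: keeping the index bookkeeping straight through the two contractions and confirming that the two reductions land on the same element of $A$ after the tensor inner-product identity. There are no analytic difficulties beyond the initial density-and-continuity reduction, and the whole argument rests on the single algebraic hypothesis through the formula $\tau(\pi^K(\Xi)\pi^K(\Theta)^*) = \tau(\pi^0(\langle\Theta,\Xi\rangle))$.
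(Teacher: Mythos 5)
Your proof is correct, but it is organized differently from the paper's. The paper never multiplies two general monomials: it first uses the fact that the set of elements $b$ satisfying $\tau(xb)=\tau(bx)$ for all $x$ is a norm-closed subspace stable under multiplication, so that it suffices to prove $\tau(x\pi^1(\zeta)) = \tau(\pi^1(\zeta)x)$ (and its adjoint version) for a single degree-one generator $\pi^1(\zeta)$; gauge invariance then leaves only the case $x = \pi^k(\xi)\pi^{k+1}(\eta\otimes\eta')^*$, which is settled by one chain of equalities invoking the hypothesis at levels $k$ and $k+1$. You instead verify $\tau(ab)=\tau(ba)$ directly for an arbitrary pair of monomials $a = \pi^k(\xi)\pi^m(\eta)^*$, $b = \pi^p(\zeta)\pi^q(\omega)^*$, collapsing both $ab$ and $ba$ to balanced monomials and reducing everything, via the hypothesis in the form $\tau(\pi^K(\Xi)\pi^K(\Theta)^*) = \tau(\pi^0(\langle\Theta,\Xi\rangle))$, to one identity of $A$-valued inner products, with the adjoint symmetry disposing of the case $m > p$. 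The ingredients (density of monomials, gauge invariance, Toeplitz relations, the hypothesis) are the same, but the decompositions genuinely differ, and each buys something. The paper's route is shorter: one commutation, one case, no index bookkeeping. Your route is more symmetric and explicit: it exhibits $\tau(ab)$ and $\tau(ba)$ as $\tau\circ\pi^0$ applied to the same element of $A$, and it treats the pairing of two degree-zero monomials---where the claim is precisely that $\tau\circ\pi^0$ is a trace on $A$, i.e., the $k=0$ hypothesis---on the same footing as all other cases, a point the paper's generator reduction passes over silently (monomials in $\pi^0(A)$ need not be products of elements $\pi^1(\zeta)$ and $\pi^1(\zeta)^*$ when the correspondence is not full). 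One detail to mention in a write-up: when $m=0$, $k=0$, or $s=0$ the splittings $\zeta = \zeta_1\otimes\zeta_2$ and $\omega = \omega_1\otimes\omega_2$ degenerate, but there the Toeplitz relation $\pi^0(\eta)^*\pi^p(\zeta) = \pi^p(\eta^*\zeta)$ applies directly, so the argument only simplifies.
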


\begin{proof}
The elements of the form $\pi^k(\xi) \pi^\ell(\eta)^*$ for $k, \ell \in \mathbb{N}$, $\xi \in H^{\otimes k}$ and $\eta \in H^{\otimes \ell}$ span a dense subset of $\mathcal{O}_A(H)$.  Hence to show $\tau$ is a tracial state on $\mathcal{O}_A(H)$, it's enough to show
\[ \tau(x \pi^1(\zeta)) = \tau(\pi^1(\zeta) x) \qquad \text{and} \qquad \tau(x \pi^1(\zeta)^*) = \tau(\pi^1(\zeta)^* x) \]
for all $x \in \mathcal{O}_A(H)$ and $\zeta \in H$.  In fact, the second equation follows from the first by taking adjoints.  So it's enough to verify the first equation.

Fix $\zeta \in H$.  Suppose $k, \ell \in \mathbb{N}$, $\xi \in H^{\otimes k}$, and $\eta \in H^{\otimes \ell}$.  If $\ell \neq k + 1$, then
\[ \tau(\pi^k(\xi)\pi^\ell(\eta)^*\pi^1(\zeta)) = \tau(\pi^1(\zeta)\pi^k(\xi)\pi^\ell(\eta)^*) = 0 \]
since $\tau$ is gauge invariant.  Hence to verify $\tau$ is a tracial state, it's enough to show
\[ \tau(\pi^k(\xi)\pi^{k+1}(\eta \otimes \eta')^* \pi^1(\zeta)) = \tau(\pi^1(\zeta)\pi^k(\xi)\pi^{k+1}(\eta \otimes \eta')^*) \]
for all $k \in \mathbb{N}$, $\xi, \eta' \in H^{\otimes k}$, and $\eta \in H$.  We now verify this equality.
\begin{align*}
     \tau(\pi^k(\xi)\pi^{k+1}&(\eta \otimes \eta')^* \pi^1(\zeta)))
   = \tau(\pi^k(\xi)\pi^k(\eta')^*\pi^1(\eta)^*\pi^1(\zeta)) \\
  &= \tau(\pi^k(\xi)\pi^k(\eta')^*\pi^0(\langle \eta, \zeta \rangle))
   = \tau(\pi^k(\xi)\pi^k(\langle \zeta, \eta \rangle \eta')^*) \\
  &= \tau(\pi^k(\langle \zeta, \eta \rangle \eta')^*\pi^k(\xi))
   = \tau(\pi^k(\eta')^*\pi^0(\langle \eta, \zeta \rangle)\pi^k(\xi)) \\
  &= \tau(\pi^k(\eta')^*\pi^1(\eta)^*\pi^1(\zeta)\pi^k(\xi))
   = \tau(\pi^{k+1}(\eta \otimes \eta')^*\pi^{k+1}(\zeta \otimes \xi)) \\
  &= \tau(\pi^{k+1}(\zeta \otimes \xi)\pi^{k+1}(\eta \otimes \eta')^*)
   = \tau(\pi^1(\zeta)\pi^k(\xi)\pi^{k+1}(\eta \otimes \eta')^*).
\end{align*}
Hence $\tau$ is a tracial state.
\end{proof}

\begin{lemma}\label{lem:ConvergesOfTracesOnCuntzPimsnserAlgebras}
Let $H$ be a C*-correspondence over a C*-algebra $A$.  Let $(\pi^0, \pi^1)$ denote the universal covariant representation on the Cuntz-Pimsner algebra $\mathcal{O}_A(H)$.  If $(\tau_i)$ is a net of gauge invariant tracial states on $\mathcal{O}_A(H)$ and $\tau$ is a gauge invariant tracial on $\mathcal{O}_A(H)$, then $\tau_i \rightarrow \tau$ weak* if and only if $\tau_i(\pi^0(a)) \rightarrow \tau(\pi^0(a))$ for every $a \in A$.
\end{lemma}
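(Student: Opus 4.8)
The plan is to note that the forward implication is immediate and to reduce the converse to convergence on a dense spanning set, on which gauge invariance together with the trace property forces every gauge invariant tracial state to be determined by its restriction to $\pi^0(A)$. Indeed, if $\tau_i \to \tau$ weak*, then by definition $\tau_i(x) \to \tau(x)$ for every $x \in \mathcal{O}_A(H)$, and in particular for each $x = \pi^0(a)$, which gives the ``only if'' direction with no work.

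For the converse, suppose $\tau_i(\pi^0(a)) \to \tau(\pi^0(a))$ for all $a \in A$. First I would recall, as in the proof of Lemma \ref{lem:TracesOnCuntzPimsnerAlgebras}, that the products $\pi^k(\xi)\pi^\ell(\eta)^*$ with $k, \ell \in \mathbb{N}$, $\xi \in H^{\otimes k}$, and $\eta \in H^{\otimes \ell}$ span a dense subset of $\mathcal{O}_A(H)$. On such an element gauge invariance forces the value of any gauge invariant state to vanish unless $k = \ell$, since the gauge action scales $\pi^k(\xi)\pi^\ell(\eta)^*$ by $z^{k - \ell}$. For the diagonal terms $k = \ell$, the trace property combined with the correspondence relation $\pi^k(\eta)^*\pi^k(\xi) = \pi^0(\langle \eta, \xi \rangle)$ gives, for every gauge invariant tracial state $\omega$,
\[ \omega(\pi^k(\xi)\pi^k(\eta)^*) = \omega(\pi^k(\eta)^*\pi^k(\xi)) = \omega(\pi^0(\langle \eta, \xi \rangle)), \]
where $\langle \eta, \xi \rangle \in A$ is the inner product on $H^{\otimes k}$. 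Thus the value of $\omega$ on the entire spanning set is pinned down by $\omega \circ \pi^0$.

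Applying this identity to each $\tau_i$ and to $\tau$ and invoking the hypothesis with $a = \langle \eta, \xi \rangle$ yields $\tau_i(\pi^k(\xi)\pi^k(\eta)^*) \to \tau(\pi^k(\xi)\pi^k(\eta)^*)$, while the off-diagonal terms converge trivially since both sides vanish. Hence $\tau_i \to \tau$ pointwise on the dense spanning set. Finally, since each $\tau_i$ and $\tau$ is a state and therefore contractive, a routine $3\varepsilon$-argument upgrades pointwise convergence on a dense set to weak* convergence on all of $\mathcal{O}_A(H)$: for $x \in \mathcal{O}_A(H)$ and $y$ in the span with $\|x - y\|$ small, $|\tau_i(x) - \tau(x)|$ is controlled by $2\|x - y\|$ plus $|\tau_i(y) - \tau(y)|$. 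I do not expect a genuine obstacle here; the only real content is the diagonal identity above, which is essentially the computation already carried out in Lemma \ref{lem:TracesOnCuntzPimsnerAlgebras}, so this lemma amounts to repackaging that computation together with the density and boundedness arguments.
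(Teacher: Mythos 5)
Your proposal is correct and follows essentially the same route as the paper: both reduce to the spanning set $\pi^k(\xi)\pi^\ell(\eta)^*$, kill the off-diagonal terms by gauge invariance, and handle the diagonal terms via the trace property and the identity $\pi^k(\eta)^*\pi^k(\xi) = \pi^0(\langle \eta, \xi\rangle)$. The only difference is cosmetic: you spell out the uniform-boundedness/$3\varepsilon$ step that upgrades convergence on the dense span to weak* convergence, which the paper leaves implicit.
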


\begin{proof}
The forward implication is trivial.  Assume $\tau_i \circ \pi^0 \rightarrow \tau \circ \pi^0$ weak*.  If $k, \ell \in \mathbb{N}$ with $k \neq \ell$, $\xi \in H^{\otimes k}$ and $\ell \in H^{\otimes \ell}$, then
\[ \tau_i(\pi^k(\xi) \pi^\ell(\eta)^*) = \tau(\pi^k(\xi) \pi^\ell(\eta)^*) = 0. \]
If $k \in \mathbb{N}$ and $\xi, \eta \in H^{\otimes k}$, then
\[ \tau_i(\pi^k(\xi) \pi^k(\eta)^*) = \tau_i(\pi^0(\langle \eta, \xi \rangle)) \rightarrow \tau(\pi^0(\langle \eta, \xi \rangle)) = \tau(\pi^k(\xi) \pi^k(\eta)^*). \]
Since the elements of the form $\pi^k(\xi) \pi^\ell(\eta)^*$ for $k, \ell \in \mathbb{N}$, $\xi \in H^{\otimes k}$, and $\eta \in H^{\otimes \ell}$, span a dense subset of $\mathcal{O}_A(H)$, it follows that $\tau_i \rightarrow \tau$ weak*.
\end{proof}

Let $T(\cstar(E))$ denote the space of tracial states on $\cstar(E)$ equipped with the weak* topology and let $T(\cstar(E)^\gamma$ denote the subspace consisting of tracial states invariant under the gauge action $\gamma : \mathbb{T} \curvearrowright \cstar(E)$.  Also, let $T(E)$ denote the space of invariant measures on $E$ equipped with the weak* topology.

\begin{theorem}\label{thm:InvariantTrace}
Given $\tau \in T(\cstar(E))$, there is a unique $\mu \in T(E)$ such that
\begin{equation}\label{eqn:TraceToMeasure}
  \tau(a) = \int_{E^{^0}} a(v) \, d\mu(v)
\end{equation}
for all $a \in C_0(E^0) \subseteq \cstar(E)$.  Conversely, given $\mu \in T(E)$, there is a unique $\tau \in T(\cstar(E))^\gamma$ such that
\begin{equation}\label{eqn:MeasureToTrace}
  \tau(x) = \langle \hat{u}, \pi_\mu(x) \hat{u} \rangle_{L^2(E, \mu)}
\end{equation}
for each $x \in \cstar(E)$.  These maps define inverse affine homeomorphisms between $T(E)$ and $T(\cstar(E))^\gamma$.
\end{theorem}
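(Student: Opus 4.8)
The plan is to handle the two maps separately and then verify they are mutually inverse affine homeomorphisms. For the forward map, given $\tau \in T(\cstar(E))$, composing with $\pi^0 : C_0(E^0) \to \cstar(E)$ produces a state on $C_0(E^0)$, so the Riesz Representation Theorem yields a unique Radon probability measure $\mu$ satisfying \eqref{eqn:TraceToMeasure}. To see $\mu \in T(E)$ I would fix a positive $f \in C_c(E^0)$ and, for any $\zeta \in C_c(E^1)$ with $0 \leq \zeta \leq f \circ r$, use Lemma \ref{lem:PartitionOfUnity} to write $\zeta = \sum_i \xi_i \overline{\eta_i}$ and $\lambda(\zeta) = \sum_i \xi_i \otimes \eta_i^*$. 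The trace property together with the relation $\pi^1(\xi_i)^*\pi^1(\eta_i) = \pi^0(\langle \xi_i, \eta_i \rangle)$ gives $\tau(\varphi(\lambda(\zeta))) = \int_{E^1} \zeta \, ds^*\mu$, while Lemma \ref{lem:InequalityForFunctionsOnEdges} gives $\varphi(\lambda(\zeta)) \leq \pi^0(f)$; applying $\tau$ and taking the supremum over such $\zeta$ yields the inequality \eqref{eqn:InvariantMeasure}. When $\operatorname{supp}(f) \subseteq E^0_\reg$, the covariance relation $\varphi(\lambda(f \circ r)) = \pi^0(f)$ upgrades this to an equality, so $\mu$ is invariant.

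For the converse I would take $\tau$ defined by \eqref{eqn:MeasureToTrace} and first observe it is gauge invariant: each $f_i \in C_c(\partial E)$ is supported on the diagonal $\{(\alpha, 0, \alpha)\}$, so it is fixed by the unitaries $U_z$ of Definition \ref{defn:GaugeActionOnBL2}; hence $\hat{u}$ is $U_z$-invariant and $\tau \circ \gamma_z = \tau$. To show $\tau$ is tracial I would invoke Lemma \ref{lem:TracesOnCuntzPimsnerAlgebras}, which reduces the task to verifying $\tau(\pi^k(\xi)^*\pi^k(\eta)) = \tau(\pi^k(\eta)\pi^k(\xi)^*)$ for all $k \geq 0$ and $\xi, \eta \in H(E)^{\otimes k}$; by density and continuity it suffices to treat functions in $C_c(E^k)$.

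The heart of the argument, and the step I expect to be the main obstacle, is computing these two quantities through the boundary representation. The left-hand side is immediate: $\pi^k(\xi)^*\pi^k(\eta) = \pi^0(\langle \xi, \eta \rangle)$, so $\tau(\pi^k(\xi)^*\pi^k(\eta)) = \int_{E^0} \langle \xi, \eta \rangle \, d\mu$. For the right-hand side I would use the explicit formulas for $\pi^1(\xi)$ and its adjoint to compute $\pi(\pi^k(\eta)\pi^k(\xi)^*) f_i$ and pair against $f_i$. Because $f_i$ lives on the diagonal, the inner product $\langle f_i, \pi(\pi^k(\eta)\pi^k(\xi)^*) f_i \rangle(\delta)$ collapses to $|f_i(\delta, 0, \delta)|^2 \, \eta(\delta_1 \cdots \delta_k) \overline{\xi(\delta_1 \cdots \delta_k)}$ on paths $\delta$ with $|\delta| \geq k$; passing to the limit in \eqref{eqn:TraceFormula} gives $\tau(\pi^k(\eta)\pi^k(\xi)^*) = \int_{\partial E} F \, d\tilde{\mu}$, where $F(\delta) = \eta(\delta_1 \cdots \delta_k) \overline{\xi(\delta_1 \cdots \delta_k)} \in C_c(\partial E \setminus \partial E_{k-1})$. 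Applying the invariance identity \eqref{eqn:BoundaryMeasureInvariant'} of Corollary \ref{cor:BoundaryMeasureInvariant} to $F$ rewrites this as $\int_{\partial E} \langle \xi, \eta \rangle(r(\alpha)) \, d\tilde{\mu}(\alpha)$, and then \eqref{eqn:BoundaryMeasureRange} identifies it with $\int_{E^0} \langle \xi, \eta \rangle \, d\mu$. Thus the two sides agree and $\tau$ is a gauge invariant tracial state.

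It remains to check the two constructions are mutually inverse affine homeomorphisms. That the composite $\mu \mapsto \tau \mapsto \mu'$ returns $\mu$ follows from the same diagonal computation applied to $\pi^0(a)$, which gives $\tau(\pi^0(a)) = \int_{\partial E} a \circ r \, d\tilde{\mu} = \int_{E^0} a \, d\mu$ by \eqref{eqn:BoundaryMeasureRange}. For the reverse composite I would prove a uniqueness statement: any $\tau \in T(\cstar(E))^\gamma$ is determined by its restriction to $C_0(E^0)$, since gauge invariance forces $\tau(\pi^k(\xi)\pi^\ell(\eta)^*) = 0$ for $k \neq \ell$, while the trace property gives $\tau(\pi^k(\xi)\pi^k(\eta)^*) = \int_{E^0} \langle \eta, \xi \rangle \, d\mu$ for $k = \ell$; as these elements span a dense subset, two gauge invariant traces with the same associated measure coincide, so $\tau \mapsto \mu \mapsto \tau$ is the identity on $T(\cstar(E))^\gamma$. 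Both maps are visibly affine, and the measure-to-trace map is weak* continuous by Lemma \ref{lem:ConvergesOfTracesOnCuntzPimsnserAlgebras}: convergence $\mu_i \to \mu$ in $T(E)$ means $\int a \, d\mu_i \to \int a \, d\mu$, that is, $\tau_{\mu_i}(\pi^0(a)) \to \tau_\mu(\pi^0(a))$, which by that lemma implies $\tau_{\mu_i} \to \tau_\mu$ weak*. The trace-to-measure map is continuous by inspection, completing the proof that these are inverse affine homeomorphisms.
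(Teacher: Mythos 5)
Your proposal is correct and takes essentially the same route as the paper's own proof: the Riesz representation plus Lemmas \ref{lem:PartitionOfUnity} and \ref{lem:InequalityForFunctionsOnEdges} for the trace-to-measure direction, the diagonal computation in the boundary representation combined with Corollary \ref{cor:BoundaryMeasureInvariant} and Lemma \ref{lem:TracesOnCuntzPimsnerAlgebras} for the measure-to-trace direction, and the same uniqueness and continuity arguments (via Lemma \ref{lem:ConvergesOfTracesOnCuntzPimsnserAlgebras}) for the affine homeomorphism. The only cosmetic differences are that you evaluate $\tau(\pi^k(\xi)^*\pi^k(\eta))$ algebraically through the Cuntz--Pimsner relation rather than through the boundary representation, and you phrase the passage from compactly supported $\zeta$ to $f \circ r$ as a supremum where the paper invokes the Monotone Convergence Theorem; both are equivalent.
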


\begin{proof}
Let $(\pi_u^0, \pi_u^1)$ denote the universal covariant representation of $E$ on $\cstar(E)$.

Given $\tau \in T(\cstar(E))$, the composition $C_0(E^0) \overset{\pi^0_u}{\longrightarrow} \cstar(E) \overset{\tau}{\longrightarrow} \mathbb{C}$ is a state on $C_0(E^0)$ and hence there is a unique Radon probability measure $\mu$ on $E^0$ satisfying \eqref{eqn:TraceToMeasure}.  We need to show the invariance of $\mu$.

Let $\varphi : \mathbb{K}(H(E)) \rightarrow \cstar(E)$ denote the $\ast$-homomorphism induced by $(\pi_u^0, \pi_u^1)$; that is, $\varphi(\xi \otimes \eta^*) = \pi^1_u(\xi) \pi^1_u(\eta)^*$ for $\xi, \eta \in H(E)$.  Note that since $\tau$ is a tracial state, for all $\xi, \eta \in H(E)$ we have
\begin{equation}\label{eqn:InnerProductVsRankOne}
\tau(\pi^0_u(\langle \eta, \xi \rangle)) = \tau(\pi^1_u(\eta)^*\pi^1(\xi)) = \tau(\pi^1_u(\xi)\pi^1(\eta^*)) = \tau(\varphi(\xi \otimes \eta^*)).
\end{equation}

Suppose $a \in C_c(E^0)$ and fix $\zeta \in C_c(E^1)$ with $0 \leq \zeta \leq a \circ r$ and let $\lambda : C_b(E^1) \rightarrow \mathbb{B}(H(E))$ be the multiplication map as in Proposition \ref{prop:LeftMultiplicationMap}.  By Lemma \ref{lem:PartitionOfUnity}, there are $\xi_i, \eta_i \in C_c(E^1)$ for $i = 1, \ldots, n$ with $\sum_i \xi_i \otimes \eta_i^* = \lambda(\zeta)$ and $\sum_i \xi_i \overline{\eta_i} = \zeta$.  By Lemma \ref{lem:InequalityForFunctionsOnEdges}, $\varphi(\lambda(\zeta)) \leq \pi^0_u(a)$ and now we compute
\begin{align*}
 \int_{E^{^1}} \zeta \, ds^*\mu &= \int_{E^{^0}} \sum_{s(e) = v} \zeta(e) \, d\mu(v) = \sum_{i=1}^n \int_{E^{^0}} \sum_{s(e) = v} \xi(e) \overline{\eta(e)} \, d\mu(v) \\
 &= \sum_{i=1}^n \int_{E^{^0}} \langle \eta, \xi \rangle \, d\mu = \sum_{i=1}^n \tau(\pi^0_u(\langle \eta, \xi \rangle)) \overset{\eqref{eqn:InnerProductVsRankOne}}{=} \sum_{i=1}^n \tau(\varphi(\xi_i \otimes \eta_i^*)) \\ &= \tau(\varphi(\lambda(\zeta))) \leq \tau(\pi^0_u(a)) = \int_{E^{^0}} a \, d\mu
\end{align*}
Now the Monotone Convergence Theorem implies
\[ \int_{E^{^1}} a \circ r \, ds^*\mu \leq \int_{E^{^0}} a \, d\mu. \]
Moreover, when $a \in C_c(E^0_\reg)$, we may take $\zeta = a \circ r$ in the calculation above.  Then $\varphi(\lambda(\zeta)) = \pi^0_u(a)$ by the covariance condition in the definition of $\cstar(E)$.  Repeating the calculation above yields
\[ \int_{E^{^1}} a \circ r \, ds^*\mu = \int_{E^{^0}} a \, d\mu \]
which proves $\mu$ is an invariant measure on $E$.

Conversely, suppose $\mu \in T(E)$ is an invariant measure.  Then $\mu$ defines a state $\tau$ on $\cstar(E)$ via the formula given in \eqref{eqn:MeasureToTrace}.  The representation $\pi : \cstar(E) \rightarrow \mathbb{B}(L^2(E))$ is gauge invariant by Theorem \ref{thm:GaugeActionOnBL2}.  Moreover, with the unitaries $U_z$ on $L^2(E)$ as in Definition \ref{defn:GaugeActionOnBL2}, for every $f \in C_c(\partial E)$, $U_zf = f$.  Hence for $x \in \cstar(E)$, $z \in \mathbb{T}$, and $f \in C_c(\partial E)$,
\[ \langle f, \pi(\gamma_z(x))f \rangle = \langle f, U_z \pi(x) U_z^* f \rangle = \langle f, \pi(x)f \rangle. \]
It follows from \eqref{eqn:TraceFormula} that $\tau(\gamma_z(x)) = \tau(x)$ for every $x \in \cstar(E)$ and $z \in \mathbb{T}$; that is, $\tau$ is gauge invariant.

We prove $\tau$ is tracial.  Fix an integer $k \geq 1$ and suppose $\xi, \eta \in C_c(E^k)$.  Define
\[ K := \{ \alpha \in \partial E : r(\alpha) \in s(\operatorname{supp}(\xi)) \} \cup \{\alpha \in \partial E \setminus \partial E_{k-1} : \alpha_1 \ldots \alpha_k \in \operatorname{supp}(\xi) \} \]
and note that $K$ is compact by Proposition \ref{prop:BoundaryProjectiveLimit}.  Let $f_0 \in C_c(\partial E)$ be such that $0 \leq f_0 \leq 1$ and $f_0|K = 1$ and fix $f \in C_c(\partial E)$ with $f_0 \leq f \leq 1$.

Given a vertex $v \in E^0$, write $E^k v$ for all paths $\alpha \in E^k$ with $s(\alpha) = v$.  Fix $\beta \in \partial E$.  Since $f$ is supported on $\partial E \subseteq X$, we have
\begin{align*}
     \langle f, \pi^k(\xi)^* \pi^k(\eta)(f) \rangle(\beta)
  &= \sum_{d(x) = \beta} \overline{f(x)} \pi^k(\xi)^*\pi^k(\eta)(f)(x) \\
  &= f(\beta, 0, \beta) \pi^k(\xi)^* \pi^k(\eta)(f)(\beta, 0, \beta) \\
  &= \sum_{\alpha \in E^k r(\beta)} \overline{\xi(\alpha)} \eta(\alpha) f(\beta, 0, \beta)^2 \\
  &= \sum_{\alpha \in E^k r(\beta)} \overline{\xi(\alpha)} \eta(\alpha).
\end{align*}
Similarly, if $|\beta| \geq k$,
\begin{align*}
\langle f, \pi^k(\eta) \pi^k(\xi)^*(f) \rangle(\beta) &= f(\beta, 0, \beta) \eta(\beta_1 \ldots \beta_k)\sum_{\alpha \in E^k s(\beta_k)} \overline{\xi(\alpha)} f(\alpha\sigma(\beta), 0, \beta) \\ &= \eta(\beta_1 \cdots \beta_k) \overline{\xi(\beta_1 \cdots \beta_k)} f(\beta, 0, \beta)^2 = \overline{\xi(\beta_1 \cdots \beta_k)} \eta(\beta_1 \cdots \beta_k),
\end{align*}
and if $|\beta| < k$, then $\langle f, \pi^k(\eta) \pi^k(\xi)^*(f) \rangle(\beta) = 0$.

Now, by the invariance of the measure $\tilde{\mu}$ given in Corollary \ref{cor:BoundaryMeasureInvariant}, we have
\begin{align*}
 \int_{\partial E} \langle f, &\pi^k(\eta) \pi^k(\xi)^*(f) \rangle \, d\tilde{\mu} = \int_{\partial E \setminus \partial E_{k-1}} \overline{\xi(\beta_1\cdots\beta_k)} \eta(\beta_1\cdots\beta_k) \, d\tilde{\mu} \\ &\overset{\eqref{eqn:BoundaryMeasureInvariant'}}{=} \int_{\partial E} \sum_{\alpha \in E^k r(\beta)} \overline{\xi(\alpha)} \eta(\alpha) \, d\tilde{\mu}(\beta) = \int_{\partial E} \langle f, \pi^k(\xi)^* \pi^k(\eta)(f) \rangle(\beta).
\end{align*}
Now by \eqref{eqn:TraceFormula}, $\tau(\pi^k_u(\eta) \pi^k_u(\xi)^*) = \tau(\pi^k_u(\xi)^* \pi^k_u(\eta))$ and Lemma \ref{lem:TracesOnCuntzPimsnerAlgebras} implies $\tau$ is a tracial state.  Hence $\tau \in T(\cstar(E))^\gamma$.

We now show the maps constructed above are inverses.  Starting with an invariant measure $\mu$ on $E$, let $\tau$ denote the tracial state on $\cstar(E)$ defined by \eqref{eqn:MeasureToTrace} and let $\mu'$ denote the invariant measure induced by $\tau$ as in \eqref{eqn:TraceToMeasure}.  Fix $a \in C_c(E^0)$ and let $f \in C_c(\partial E)$ be such that $0 \leq f \leq 1$ and $f(\beta) = 1$ for all $\beta \in \partial E$ with $r(\beta) \in \operatorname{supp}(a)$.  Then
\begin{align*}
   \int_{E^{^0}} a \, d\mu' &\overset{\eqref{eqn:TraceToMeasure}}{=} \tau(\pi^0_u(a)) \overset{\eqref{eqn:MeasureToTrace}}{=} \int_{\partial E} \langle \hat{u}, \pi^0(a) \hat{u} \rangle \, d\tilde{\mu} \overset{\eqref{eqn:TraceFormula}}{=} \int_{\partial E} \langle f, \pi^0(a) f \rangle \, d\tilde{\mu} \\ &= \int_{\partial E} f(\alpha) a(r(\alpha)) f(\alpha) \, d\tilde{\mu}(\alpha) = \int_{\partial E} a \circ r \, d\tilde{\mu} \overset{\eqref{eqn:BoundaryMeasureRange}}{=} \int_{E^{^0}} a \, d \mu.
\end{align*}
Hence $\mu = \mu'$.

Now suppose $\tau$ is a gauge invariant tracial state on $\cstar(E)$.  Let $\mu$ be the invariant measure on $E$ defined by \eqref{eqn:TraceToMeasure} and let $\tau'$ be the tracial state on $\cstar(E)$ defined by \eqref{eqn:MeasureToTrace}.  Arguing as in the previous paragraph, we have $\tau(\pi^0_u(a)) = \int_{E^{^0}} a \, d\mu = \tau'(\pi^0_u(a))$ for each $a \in C_0(E^0)$.  Now, for $k \in \mathbb{N}$ and $\xi, \eta \in C_c(E^k) \subseteq H(E)^{\otimes k}$, we have
\[ \tau(\pi^k_u(\xi) \pi^k_u(\eta)^*) = \tau(\pi^k_u(\eta)^* \pi^k_u(\xi)) = \tau(\pi^0_u(\langle \eta, \xi \rangle)) = \tau'(\pi^0_u(\langle \xi, \eta \rangle)) = \tau'(\pi^k_u(\xi) \pi^k_u(\eta)^*), \]
since $\tau$ and $\tau'$ are both tracial states.  Moreover, since $\tau$ and $\tau'$ are both gauge invariant, if $k, \ell \in \mathbb{N}$ are distinct integers, $\xi \in C_c(E^k)$ and $\eta \in C_c(E^\ell)$,
\[ \tau(\pi^k_u(\xi) \pi^\ell_u(\eta)^*) = 0 = \tau(\pi^k_u(\xi) \pi^\ell_u(\eta)^*). \]
Hence $\tau = \tau'$.

It is clear from the construction that the map $T(\cstar(E))^\gamma \rightarrow T(E)$ is affine and hence the inverse map is affine.  Suppose $\tau_i, \tau$ are gauge invariant tracial states on $\cstar(E)$ and $\mu_i, \mu$ are the corresponding invariant measure on $\cstar(E)$.  Then $\mu_i \rightarrow \mu$ if and only if $\tau_i \circ \pi^0 \rightarrow \tau \circ \pi^0$.  By Lemma \ref{lem:ConvergesOfTracesOnCuntzPimsnserAlgebras}, $\mu_i \rightarrow \mu$ if and only if $\tau_i \rightarrow \tau$.  Hence the map $T(\cstar(E))^\gamma \rightarrow T(E)$ is a homeomorphism.
\end{proof}

The following result shows the representation associated to an invariant measure $\mu$ on $E$ constructed in \ref{defn:GNSSpace} is precisely the GNS representation of the gauge invariant trace $\tau$ on $\cstar(E)$ corresponding to $\mu$.

\begin{theorem}
If $\tau$ is a gauge invariant trace on $\cstar(E)$ and $\mu$ is the corresponding invariant measure on $E$ given by \eqref{eqn:TraceToMeasure}, the then representations
\[ \pi_\tau : \cstar(E) \rightarrow \mathbb{B}(L^2(\cstar(E), \tau)) \qquad \text{and} \qquad \pi_\mu : \cstar(E) \rightarrow \mathbb{B}(L^2(E, \mu)) \]
are unitarily equivalent.
\end{theorem}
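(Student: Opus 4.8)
The plan is to reduce the statement to the uniqueness of the GNS representation and then to the cyclicity of the distinguished vector $\hat u \in L^2(E,\mu)$. Recall the standard fact that if $(\rho, \mathcal H, \Omega)$ is a cyclic representation of a C*-algebra $A$ whose cyclic vector implements a state $\phi$, i.e. $\phi(a) = \langle \Omega, \rho(a)\Omega\rangle$ for all $a \in A$, then $(\rho, \mathcal H, \Omega)$ is unitarily equivalent to the GNS triple $(\pi_\phi, L^2(A,\phi), \Omega_\phi)$; the intertwining unitary is the continuous extension of $\rho(a)\Omega \mapsto \pi_\phi(a)\Omega_\phi$, which is isometric because both sides carry the inner product $\phi(a^*b)$ and surjective precisely because $\Omega$ is cyclic (this remains valid for states on possibly non-unital C*-algebras, using an approximate unit to produce $\Omega_\phi$). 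By Theorem \ref{thm:InvariantTrace}, the trace $\tau$ corresponding to $\mu$ satisfies $\tau(x) = \langle \hat u, \pi_\mu(x) \hat u\rangle$ for all $x \in \cstar(E)$; this is exactly \eqref{eqn:MeasureToTrace}. Thus the entire theorem reduces to showing that $\hat u$ is a cyclic vector for $\pi_\mu$.

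To analyze cyclicity I would first set up a convenient model for $L^2(E,\mu)$. Using $f \otimes g = (f \cdot g) \otimes 1$ for $f \in C_c(X)$ and $g \in C_c(\partial E)$ (right module action), the vectors $\{ g \otimes 1 : g \in C_c(X)\}$ are dense in $L^2(E,\mu)$, and $g \mapsto g \otimes 1$ is isometric for the norm $\|g\|^2 = \int_{\partial E} \langle g, g\rangle \, d\tilde\mu = \int_X |g|^2 \, d(d^*\tilde\mu)$. Hence it extends to a unitary $L^2(X, d^*\tilde\mu) \cong L^2(E,\mu)$ carrying the constant function $1$ (the pointwise limit of the $f_i$) to $\hat u$, and cyclicity of $\hat u$ becomes the claim that the closed $\pi_\mu(\cstar(E))$-invariant subspace $\mathcal K$ generated by $1$ is all of $L^2(X, d^*\tilde\mu)$. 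Next I would compute the orbit of $\hat u$ on the dense spanning family $\pi^k_u(\xi)\pi^\ell_u(\eta)^*$ of $\cstar(E)$. Exactly as in the computation in the proof of Theorem \ref{thm:InvariantTrace}, for $f \in C_c(\partial E)$ supported on the copy of $\partial E$ in $X$ one finds that $\pi(\pi^k(\xi)\pi^\ell(\eta)^*) f$ is the function
\[ (\alpha, k-\ell, \beta) \longmapsto \xi(\alpha_1 \cdots \alpha_k)\, \overline{\eta(\beta_1 \cdots \beta_\ell)}\, f(\beta) \]
on $\{\sigma^k(\alpha) = \sigma^\ell(\beta)\}$ and zero elsewhere. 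Since $\pi_\mu(x)\hat u = \lim_i (\pi(x) f_i)\otimes 1$, taking $f = f_i \nearrow 1$ and tensoring with $1$ (the convergence justified by dominated convergence as for \eqref{eqn:TraceFormula}) shows that $\mathcal K$ contains, for all $k,\ell \geq 0$, $\xi \in C_c(E^k)$ and $\eta \in C_c(E^\ell)$, the function $h_{\xi,\eta,k,\ell}$ given by the same formula with $f \equiv 1$. As the continuous cocycle $(\alpha,n,\beta)\mapsto n$ decomposes $X$ into clopen pieces $X_n$ and $h_{\xi,\eta,k,\ell}$ is supported in $X_{k-\ell}$, it then suffices to prove that for each fixed $n$ the functions $h_{\xi,\eta,k,\ell}$ with $k-\ell = n$ have dense span in $L^2(X_n, d^*\tilde\mu)$.

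Here is where the main work lies, and I expect it to be the principal obstacle. For a \emph{fixed} pair $(k,\ell)$ the function $h_{\xi,\eta,k,\ell}$ depends on $(\alpha,\beta)$ only through the initial segments $\alpha_1\cdots\alpha_k$ and $\beta_1\cdots\beta_\ell$, so it is constant along the common tail $\sigma^k(\alpha) = \sigma^\ell(\beta)$; thus no single degree can separate points differing only in that tail. The remedy is to use all pairs $(k,\ell)$ with $k-\ell = n$ at once: replacing $(k,\ell)$ by $(k+m, \ell+m)$ reveals $m$ further edges of the tail, and as $m \to \infty$ the initial segments determine the paths completely. Concretely, I would verify that $\mathcal A_n := \operatorname{span}\{h_{\xi,\eta,k,\ell} : k-\ell = n\}$ is a conjugation-closed subalgebra of $C_0(X_n)$ — the product of two such functions becomes, after enlarging to a common pair $(k,\ell)$, again of this form, and $\overline{h_{\xi,\eta,k,\ell}} = h_{\bar\xi,\bar\eta,k,\ell}$ — which vanishes nowhere and separates points of $X_n$ by the tail argument above (finite, singular paths being handled by choosing $k$ larger than their length, so that the corresponding function vanishes there). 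The Stone–Weierstrass theorem for $C_0$ then yields density of $\mathcal A_n$ in $C_0(X_n)$ in the supremum norm, and controlling supports inside a fixed compact neighbourhood upgrades this to density in $L^2(X_n, d^*\tilde\mu)$, using that $d^*\tilde\mu$ is finite on compact sets.

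Summing over the clopen levels $X_n$ gives $\mathcal K = L^2(X, d^*\tilde\mu) = L^2(E,\mu)$, so $\hat u$ is cyclic for $\pi_\mu$. The GNS uniqueness recalled at the outset then produces a unitary $L^2(\cstar(E),\tau) \to L^2(E,\mu)$ sending $\pi_\tau(x)\Omega_\tau$ to $\pi_\mu(x)\hat u$ and intertwining $\pi_\tau$ with $\pi_\mu$, which is the desired unitary equivalence.
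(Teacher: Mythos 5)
Your route is the same as the paper's: reduce, via uniqueness of the GNS representation, to the cyclicity of $\hat u$ for $\pi_\mu$, identify the orbit vectors as the functions $h_{\xi,\eta,k,\ell}$ (the paper's $\pi^k(\xi)\pi^\ell(\eta)^*(\chi_{\partial E})$), and obtain density from Stone--Weierstrass; your two refinements (the unitary $L^2(E,\mu)\cong L^2(X,d^*\tilde\mu)$ and the decomposition of $X$ into the clopen level sets $X_n$, which makes the passage from uniform to $L^2$ density clean) are correct. The gap is in the one step that carries all the weight: the claim that $\mathcal A_n$ is closed under multiplication because ``the product of two such functions becomes, after enlarging to a common pair $(k,\ell)$, again of this form.'' Suppose $k-\ell=k'-\ell'=n$ with $k<k'$. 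The product $h_{\xi,\eta,k,\ell}\,h_{\xi',\eta',k',\ell'}$ is supported on the \emph{smaller} relation $\{\sigma^k(\alpha)=\sigma^\ell(\beta)\}$, on which the new coordinates are coupled by the diagonal constraint $\alpha_{k+1}\cdots\alpha_{k'}=\beta_{\ell+1}\cdots\beta_{\ell'}$; a degree-$(k',\ell')$ function $\xi''(\alpha_1\cdots\alpha_{k'})\overline{\eta''(\beta_1\cdots\beta_{\ell'})}$ lives on the \emph{larger} relation $\{\sigma^{k'}(\alpha)=\sigma^{\ell'}(\beta)\}$ and cannot encode that constraint, so the product is not of this form for the common pair. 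Nor can you first rewrite each factor in a common higher degree: degree-$(k+m,\ell+m)$ functions all vanish at points $(\alpha,n,\beta)$ with $|\alpha|<k+m$, in particular on finite (singular) boundary paths, where a degree-$(k,\ell)$ function need not vanish.

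The multiplicative closure is nevertheless true, but proving it requires the same device as Lemma \ref{lem:PartitionOfUnity}: cover the compact set of segments $\alpha_{k+1}\cdots\alpha_{k'}$ arising from $\operatorname{supp}\xi'$ by finitely many open sets $U_j\subseteq E^{k'-k}$ on which the source map $s$ is injective, take a subordinate partition of unity $(\rho_j)$, and put $\Phi_j(\alpha')=\xi(\alpha'_1\cdots\alpha'_k)\,\xi'(\alpha')\,\rho_j^{1/2}(\alpha'_{k+1}\cdots\alpha'_{k'})$ and $\Psi_j(\beta')=\eta(\beta'_1\cdots\beta'_\ell)\,\eta'(\beta')\,\rho_j^{1/2}(\beta'_{\ell+1}\cdots\beta'_{\ell'})$. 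Since two segments lying in the same $U_j$ with equal sources must coincide, each $h_{\Phi_j,\Psi_j,k',\ell'}$ is automatically supported inside the smaller relation, and $\sum_j h_{\Phi_j,\Psi_j,k',\ell'}$ equals the product. For what it is worth, the paper's own proof is defective at exactly this point: it asserts that cross-degree products vanish, which is false (for a single loop, the degree-$(1,0)$ and degree-$(2,1)$ functions have nonzero product), so your instinct that such products can be re-expressed rather than killed is the correct one. But as written, neither your sentence nor the paper's displayed formula establishes that the span is an algebra, and without that Stone--Weierstrass cannot be invoked; supplying the partition-of-unity argument above closes the gap.
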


\begin{proof}
Fix an increasing net $K_i$ of compact subsets of $\partial E \subseteq X$ whose interiors cover $\partial E$.  Let $f_i \in C_c(\partial E)$ be such that $0 \leq f_i \leq 1$ and $f|K_i = 1$.  Given $k, \ell \in \mathbb{N}$, $\xi \in C_c(E^k)$, and $\eta \in C_c(E^\ell)$, the net $\pi^k(\xi) \pi^\ell(\eta)^*(f_i)$ in $C_c(X)$ is eventually constant.  Abusing notation slightly, we let $\pi^k(\xi) \pi^\ell(\eta)^* (\chi_{\partial E})$ denote the limit in $C_c(X)$.

Consider the set
\[ A = \operatorname{span}\, \{ \pi^k(\xi) \pi^\ell(\eta)^* (\chi_{\partial E}) : k, \ell \in \mathbb{N}, \xi \in C_c(E^k), \eta \in C_c(E^\ell) \} \subseteq C_c(X). \]
We will show $A$ is uniformly dense in $C_c(X)$.  Fix $k, \ell \in \mathbb{N}$, $\xi \in C_c(E^k)$, $\eta \in C_c(E^\ell)$, and $(\alpha, n, \beta) \in X$. If $k - \ell = n$ and there are $\alpha' \in E^k$, $\beta' \in E^\ell$, and $\omega \in \partial E$ with $\alpha' \omega = \alpha$ and $\beta' \omega = \beta$, then
\[ \pi^k(\xi) \pi^\ell(\eta)^*(\chi_{\partial E}) = \xi(\alpha') \overline{\eta(\beta')}; \]
otherwise, $\pi^k(\xi) \pi^\ell(\eta)^*(\chi_{\partial E}) = 0$.  It follows that for $k, k', \ell, \ell' \in \mathbb{N}$, $\xi \in C_c(E^k), \xi' \in C_c(E^{k'})$, $\eta \in C_c(E^\ell)$, and $\eta' \in C_c(E^{\ell'})$,
\[ \pi^k(\xi) \pi^\ell(\eta)^*(\chi_{\partial E}) \pi^{k'}(\xi') \pi^{\ell'}(\eta')^*(\chi_{\partial E}) = \begin{cases} \pi^k(\xi \xi') \pi^\ell(\eta \eta')^* (\chi_{\partial E}) & k = k', \ell = \ell' \\ 0 & \text{else} \end{cases} \]
and
\[ \pi^k(\xi) \pi^\ell(\eta)^*(\chi_{\partial E}) = \pi^k(\overline{\xi}) \pi^\ell(\overline{\eta})^*(\chi_{\partial E}). \]
Hence $A$ is a $\ast$-subalgebra of $C_c(X)$.

Given $(\alpha, n, \beta) \in X$, there are $k, \ell \in \mathbb{N}$, $\alpha' \in E^k$, $\beta' \in E^\ell$, and $\omega \in \partial E$ such that $k - \ell = n$, $\alpha = \alpha' \omega$, and $\beta = \beta' \omega$.  Fix $\xi \in C_c(E^k)$ and $\eta \in C_c(E^\ell)$ such that $\xi(\alpha') = 1$ and $\eta(\beta') = 1$.  Then
\[ \pi^k(\xi) \pi^\ell(\eta)^*(\chi_{\partial E})(\alpha, n, \beta) = 1. \]
So the algebra $A$ does not vanish at any point in $X$.  Similar considerations show $A$ separates points in $X$.  Hence the Stone-Weierstrass Theorem implies $A$ is uniformly dense in $C_c(X)$.

Since $C_c(X)$ is dense in $L^2(E)$, we have $A$ is dense in $L^2(E)$ and in particular, the vectors $\pi_\mu^k(\xi) \pi_\mu^\ell(\eta)^* \hat{u}$ span a dense set of subspace of $L^2(E, \mu)$.  This shows $\hat{u}$ is a cyclic vector for the representation $\pi_\mu$.  The result follows from \eqref{eqn:MeasureToTrace} and the uniqueness of the GNS representation.
\end{proof}

We would like to mention two special cases of Theorem \ref{thm:InvariantTrace}.  If $X$ is a locally compact Hausdorff space and $\sigma$ is a homeomorphisms of $X$, Theorem \ref{thm:InvariantTrace} reduces to the classical result that every invariant probability measure on $(X, \sigma)$ induces a tracial state on the crossed product $C_0(X) \rtimes_\sigma \mathbb{Z}$.  When $E$ is a discrete graph, our notion of invariant measures on $E$ agrees with Tomforde's notion of graph traces on $E$ introduced in \cite{Tomforde:OrderedKTheory} and we recover the results of Section 3.3 in \cite{Tomforde:OrderedKTheory}.

For crossed products, if $\sigma$ is a free action of $\mathbb{Z}$ on a locally compact Hausdorff space $X$, then every tracial state on $C_0(X) \rtimes \mathbb{Z}$ is gauge invariant.  If $E$ is a discrete graph satisfying condition (K), then every tracial state on $\cstar(E)$ is gauge invariant as noted in Section 3.3 of \cite{Tomforde:OrderedKTheory}.  It is very likely that these results have have an analogue for topological graphs, but we were unable to prove this.  Freeness of topological graphs was defined by Katsura in \cite[Definition 7.2]{Katsura:TGA3} and gives a simultaneous generalization of the notions of freeness for homeomorphisms and condition (K) for discrete graphs.

\begin{conjecture}\label{conj:Freeness}
If $E$ is a free topological graph, then every tracial on $\cstar(E)$ is gauge invariant.
\end{conjecture}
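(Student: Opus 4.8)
The plan is to reduce the conjecture to Theorem \ref{thm:InvariantTrace} by showing that an arbitrary tracial state $\tau$ on $\cstar(E)$ equals its gauge average. Write $\Phi \colon \cstar(E) \to \cstar(E)^\gamma$ for the canonical faithful conditional expectation $\Phi(x) = \int_{\mathbb{T}} \gamma_z(x)\,dz$ onto the fixed point algebra, and let $\cstar(E)_n = \{x : \gamma_z(x) = z^n x\}$ be the spectral subspaces. Since $\tau$ is gauge invariant exactly when $\tau|_{\cstar(E)_n} = 0$ for all $n \neq 0$, it suffices to prove $\tau = \tau \circ \Phi$. Using the factorisation $\pi^k(\xi)\pi^\ell(\eta)^* = \pi^{n}(\xi')\,\varphi(\xi'' \otimes \eta^*)$ coming from $H(E)^{\otimes k} \cong H(E)^{\otimes n} \otimes H(E)^{\otimes \ell}$ when $k = n+\ell$, every element of $\cstar(E)_n$ with $n \geq 1$ is a norm limit of sums $\pi^n(\xi)\,a$ with $\xi \in C_c(E^n)$ and $a \in \cstar(E)^\gamma$. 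Thus the whole problem reduces to proving $\tau(\pi^n(\xi)\,a) = 0$ for each $n \geq 1$.

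I would work throughout in the groupoid picture: by the remark following Lemma \ref{lem:ConvergentNetsInX}, $\cstar(E) \cong C^*(X)$ for the \'etale groupoid $X$ with unit space $\partial E$, the grading by $n$ being the $\mathbb{Z}$-valued cocycle $(\alpha,n,\beta) \mapsto n$. The first step is an \emph{isotropy support lemma}: for $f \in C_c(X)$ whose support is disjoint from the isotropy bundle $\operatorname{Iso}(X) = \{(\alpha, n, \alpha) \in X\}$, one has $\tau(f) = 0$. This is the exact analogue of the standard free--crossed--product argument. Around a point $(\alpha_0, n_0, \beta_0)$ with $\alpha_0 \neq \beta_0$ one chooses an open bisection $B$ small enough that the range footprint $r(B)$ and source footprint $s(B)$ are disjoint; for $f$ supported in $B$ one picks $h \in C_c(\partial E)$ equal to $1$ on $s(\operatorname{supp} f)$ and supported away from $r(\operatorname{supp} f)$, so that $f = f\cdot h$ while $h\cdot f = 0$, whence $\tau(f) = \tau(fh) = \tau(hf) = 0$ by traciality; a partition of unity over the compact set $\operatorname{supp}(f)\subseteq X\setminus\operatorname{Iso}(X)$ finishes the argument. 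I expect this to go through verbatim in the topological, non--second-countable setting, with the Monotone Convergence Theorem of the introduction absorbing the measure-theoretic technicalities. Note that this step alone already yields part (3) of the main theorem: when $E$ has no cycles there are no periodic tails, so $\operatorname{Iso}(X) = \partial E$ is precisely the unit space, every $f \in \cstar(E)_n$ with $n\neq 0$ is supported off $\operatorname{Iso}(X)$, and hence $\tau = \tau\circ\Phi$.

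The remaining, and genuinely hard, step is to control $\tau$ on the \emph{nontrivial} isotropy $\operatorname{Iso}(X) \setminus \partial E$, which consists exactly of the eventually periodic boundary paths $\alpha$ with $\sigma^{\ell+n}(\alpha) = \sigma^{\ell}(\alpha)$. The correct framework here is the description of traces on \'etale groupoid C*-algebras: a trace is determined by an invariant measure $\nu$ on the unit space together with a measurable field of traces on the isotropy groups, and its value on $f \in \cstar(E)_n$ is an integral of $f|_{\operatorname{Iso}(X)}$ over the set of eventually periodic base points against $\nu$. Since $\nu$ is forced by traciality to be an invariant measure in the sense of Proposition \ref{prop:BoundaryMeasure}, the conjecture comes down to the statement that \emph{for every invariant measure $\tilde{\mu}$ on $\partial E$, the set of eventually periodic paths is $\tilde{\mu}$-null whenever $E$ is free.} The mechanism that should force this is the branching built into Katsura's freeness \cite[Definition 7.2]{Katsura:TGA3}: at a vertex lying on a cycle there are, after localisation, at least two distinct returns, so the invariance identity of Corollary \ref{cor:BoundaryMeasureInvariant}, which sums over all length-$k$ prolongations $\beta \in E^k r(\alpha)$, leaks mass off any single periodic orbit onto the paths determined by the competing cycle. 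Iterating this leak along $\sigma$ should show no invariant measure can charge the eventually periodic set.

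Making this leak estimate precise and uniform is the main obstacle. Two difficulties stand out. First, the branching expansion relies on the Cuntz--Krieger relation $\pi^0(a) = \varphi(\lambda(a\circ r))$, which is available only at \emph{regular} vertices; cycles passing through singular vertices admit no such expansion, and this is exactly the case the no-cycles hypothesis of part (3) sidesteps. Second, the combinatorial counting that carries the argument for discrete graphs must be replaced by an integration argument against $\tilde{\mu}$, and one must control it locally using the partition of unity of Lemma \ref{lem:PartitionOfUnity} and the local homeomorphism properties of $\sigma$ from Proposition \ref{prop:BackwardsShift}. I expect that a careful branching/contraction argument of this kind, once the singular-vertex behaviour is understood, establishes the conjecture; the singular case is, in my view, the delicate point where a new idea is most likely required.
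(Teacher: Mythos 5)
The statement you set out to prove is Conjecture \ref{conj:Freeness} of the paper: it is \emph{open}, and the paper contains no proof of it. The author states explicitly that the result could not be proved in full generality; what the paper establishes instead is a reduction --- Conjecture \ref{conj:Freeness} would follow from Proposition \ref{prop:NoCycles} once one shows that any cycle whose source lies in $\operatorname{supp}(\mu)$, for $\mu$ an invariant measure, has periodic source in Katsura's sense --- together with the special cases of graphs with no cycles (Corollary \ref{cor:NoCycles}), free actions of $\mathbb{Z}$, and discrete graphs satisfying condition (K) (Corollary \ref{cor:ConditionK}). Your proposal is in exactly the same position: it is a reduction plus an acknowledged conjecture, not a proof. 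The decisive step --- that freeness forces every invariant measure $\tilde{\mu}$ on $\partial E$ to give zero mass to the set of eventually periodic boundary paths --- is precisely where you write ``I expect'' rather than give an argument, and the singular-vertex difficulty you flag as requiring a new idea is the same obstruction the paper describes as ``topological technicalities'' it could not overcome. So the gap in your proposal is genuine, and it is the same gap that keeps the statement a conjecture.

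That said, the parts you do argue are sound and run parallel to what the paper actually proves. Your isotropy support lemma (a bisection $B$ with $r(\operatorname{supp} f)$ and $s(\operatorname{supp} f)$ disjoint, $h \in C_c(\partial E)$ equal to $1$ on the source footprint and $0$ on the range footprint, so $\tau(f)=\tau(fh)=\tau(hf)=0$) is the groupoid-language version of the first half of the paper's proof of Proposition \ref{prop:NoCycles}, and your observation that it already handles the no-cycle case recovers Corollary \ref{cor:NoCycles}; the paper's version is somewhat stronger, since a Cauchy--Schwarz estimate against $\operatorname{supp}(\mu)$ also kills contributions from cycles the measure does not charge, which is what makes Corollary \ref{cor:ConditionK} possible. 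Two cautions if you pursue your route further. First, your reduction to nullity of the eventually periodic set invokes the structure theory of traces on \'etale groupoid \cstar-algebras (invariant measure on the unit space plus a field of traces on isotropy groups); that theory is established in the literature under second-countability hypotheses which this paper deliberately avoids, so you would need to add separability assumptions or rework the decomposition, whereas the paper's reduction via Proposition \ref{prop:NoCycles} uses no groupoid machinery at all. Second, your proposed ``leak'' mechanism must confront exactly the issue you identify: Katsura's freeness is a condition on periodic \emph{vertices}, the branching it supplies is only usable at regular vertices where the Cuntz--Krieger relation holds, and cycles through singular vertices admit no such expansion --- this is the case that would need the genuinely new idea.
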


In particular, if $E$ is a topological graph such that $\cstar(E)$ is simple, then $E$ is free by \cite[Theorem 8.12]{Katsura:TGA3}.  Hence if the conjecture above holds, Theorem \ref{thm:InvariantTrace} would yield a complete description of the tracial state simplex for simple topological graph \cstar-algebras.  We prove a special case of Conjecture \ref{conj:Freeness} in Corollary \ref{cor:NoCycles} below.

\begin{lemma}\label{lem:TracesOnCuntzPimsnerAlgebras2}
Let $H$ be a C*-correspondence over a C*-algebra $A$, and let $\pi^k : H^{\otimes k} \rightarrow \cstar(E)$ denote the canonical map for $k \geq 0$.  If $\tau$ is a tracial state on $\mathcal{O}_A(H)$ and $\tau(\pi^k(\xi)) = 0$ for all $\xi \in H^{\otimes k}$ and $k \geq 1$, then $\tau$ is gauge invariant.
\end{lemma}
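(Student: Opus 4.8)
The plan is to reduce gauge invariance to the vanishing of $\tau$ on the off-diagonal spectral subspaces of the gauge action, and then to extract that vanishing from the trace property together with the hypothesis. As noted in the proof of Lemma~\ref{lem:TracesOnCuntzPimsnerAlgebras}, the elements $\pi^k(\xi)\pi^\ell(\eta)^*$ with $k, \ell \in \mathbb{N}$, $\xi \in H^{\otimes k}$, $\eta \in H^{\otimes \ell}$ span a dense subalgebra of $\mathcal{O}_A(H)$, and the gauge action satisfies $\gamma_z(\pi^k(\xi)\pi^\ell(\eta)^*) = z^{k - \ell}\pi^k(\xi)\pi^\ell(\eta)^*$. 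Since $\tau$ and each $\gamma_z$ are continuous, $\tau$ is gauge invariant if and only if $\tau \circ \gamma_z = \tau$ on this dense subalgebra; letting $z$ range over $\mathbb{T}$, this holds precisely when $\tau(\pi^k(\xi)\pi^\ell(\eta)^*) = 0$ for every $k \neq \ell$. Thus it suffices to prove this last vanishing.

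Fix $k > \ell$; the case $\ell > k$ will follow by applying the conclusion to the adjoint $\pi^\ell(\eta)\pi^k(\xi)^*$ and using $\tau(x^*) = \overline{\tau(x)}$. By the trace property, $\tau(\pi^k(\xi)\pi^\ell(\eta)^*) = \tau(\pi^\ell(\eta)^*\pi^k(\xi))$, so it is enough to show that $\pi^\ell(\eta)^*\pi^k(\xi)$ lies in the range of $\pi^{k - \ell}$ and then invoke the hypothesis. For an elementary tensor $\xi = \xi_1 \otimes \xi_2$ with $\xi_1 \in H^{\otimes \ell}$ and $\xi_2 \in H^{\otimes(k - \ell)}$, the Toeplitz relation $\pi^\ell(\eta)^*\pi^\ell(\xi_1) = \pi^0(\langle \eta, \xi_1 \rangle)$ gives
\[ \pi^\ell(\eta)^*\pi^k(\xi) = \pi^0(\langle \eta, \xi_1 \rangle)\pi^{k - \ell}(\xi_2) = \pi^{k - \ell}(\langle \eta, \xi_1 \rangle \cdot \xi_2). \]
Hence $\pi^\ell(\eta)^*\pi^k(\xi) = \pi^{k-\ell}(\zeta)$ for some $\zeta \in H^{\otimes(k-\ell)}$, and since $k - \ell \geq 1$ the hypothesis yields $\tau(\pi^{k-\ell}(\zeta)) = 0$, as desired.

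The one point requiring care is that $\pi^\ell(\eta)^*\pi^k(\xi)$ is genuinely of the form $\pi^{k-\ell}(\zeta)$ for a general $\xi \in H^{\otimes k}$, not merely for elementary tensors. The cleanest way to see this is to introduce the adjointable operator $T_\eta : H^{\otimes(k-\ell)} \to H^{\otimes k}$, $T_\eta(\zeta) = \eta \otimes \zeta$, whose adjoint $T_\eta^*$ satisfies $\pi^\ell(\eta)^*\pi^k(\xi) = \pi^{k-\ell}(T_\eta^*\xi)$ for all $\xi$; this agrees with the displayed formula on elementary tensors, where $T_\eta^*(\xi_1 \otimes \xi_2) = \langle \eta, \xi_1 \rangle \cdot \xi_2$, and both sides are continuous and linear in $\xi$. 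Taking $\zeta = T_\eta^*\xi$ then covers all $\xi$, so the argument of the previous paragraph applies to arbitrary $\xi$ and, via the adjoint, to both off-diagonal cases. This establishes $\tau(\pi^k(\xi)\pi^\ell(\eta)^*) = 0$ for all $k \neq \ell$ and hence the gauge invariance of $\tau$.
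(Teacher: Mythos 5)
Your proof is correct and follows essentially the same route as the paper's: reduce gauge invariance to the vanishing of $\tau$ on the off-diagonal elements $\pi^k(\xi)\pi^\ell(\eta)^*$ ($k \neq \ell$), then use traciality together with the Toeplitz relation to rewrite $\pi^\ell(\eta)^*\pi^k(\xi)$ as $\pi^{k-\ell}$ of an element of $H^{\otimes(k-\ell)}$, to which the hypothesis applies. The only difference is cosmetic: the paper argues on elementary tensors $\xi \otimes \xi'$ and extends implicitly by linearity and density, whereas you make that extension explicit via the adjointable creation operator $T_\eta$ and its adjoint, which is a nice touch but not a different argument.
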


\begin{proof}
Suppose $k, \ell \in \mathbb{N}$ with $k > \ell$.  Let $\xi, \eta \in H^{\otimes k}$ and $\xi' \in H^{\otimes(k - \ell)}$ be given.  Then
\[ \tau(\pi^k(\xi \otimes \xi') \pi^\ell(\eta)^*) = \tau(\pi^k(\eta)^* \pi^k(\xi) \pi^{k - \ell}(\xi')) = \tau(\pi^{k - \ell}(\langle \eta, \xi \rangle \xi')) = 0. \]
It follows that if $k, \ell \in \mathbb{N}$ are distinct, $\xi \in H^{\otimes k}$ and $\eta \in H^{\otimes \ell}$, then $\tau(\pi^k(\xi) \pi^{\ell}(\eta)^*) = 0$.  Indeed, when $k > \ell$, this is an immediate consequence of the calculation above, and when $\ell > k$, the result follows since by taking adjoints.

Now let $\gamma : \mathbb{T} \curvearrowright \mathcal{O}_A(H)$ denote the gauge action.  For $k, \ell \in \mathbb{N}$, $\xi \in H^{\otimes k}$, and $\eta \in H^{\otimes \ell}$, and $z \in \mathbb{T}$,
\[ \tau(\gamma_z(\pi^k(\xi) \pi^{\ell}(\eta)^*)) = z^{k - \ell} \tau(\pi^k(\xi) \pi^\ell(\eta)^*) = \tau(\pi^k(\xi) \pi^\ell(\eta)^*). \]
Since the elements of the form $\pi^k(\xi) \pi^\ell(\eta)^*$ span a dense subspace of $\mathcal{O}_A(H)$, $\tau$ is gauge invariant.
\end{proof}

A path $\alpha \in E^*$ is called a \emph{cycle} if $|\alpha| \geq 1$ and $s(\alpha) = r(\alpha)$.

\begin{proposition}\label{prop:NoCycles}
Suppose $\tau$ is a tracial state on a topological graph algebra $\cstar(E)$ and $\mu$ is the invariant measure on $E$ induced by $\tau$ as in \eqref{eqn:TraceToMeasure}.  If for every cycle $\alpha \in E^*$, $s(\alpha) \notin \operatorname{supp}(\mu)$, then $\tau$ is gauge invariant.
\end{proposition}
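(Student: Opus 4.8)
The plan is to invoke Lemma \ref{lem:TracesOnCuntzPimsnerAlgebras2}, which reduces the problem to showing that $\tau(\pi^k(\xi)) = 0$ for every $k \geq 1$ and every $\xi \in H(E)^{\otimes k}$. Since $C_c(E^k)$ is dense in $H(E)^{\otimes k}$ and $\tau \circ \pi^k$ is continuous, it suffices to treat $\xi \in C_c(E^k)$. I would then rely on two elementary facts. First, the Toeplitz relations give $\pi^0(a)\pi^k(\xi) = \pi^k((a \circ r)\xi)$ and $\pi^k(\xi)\pi^0(a) = \pi^k(\xi\,(a \circ s))$ for $a \in C_0(E^0)$, where $r, s$ are the range and source maps on $E^k$ (the second identity is the standard fact $\pi^1(\xi \cdot a) = \pi^1(\xi)\pi^0(a)$, which one checks by computing $\pi^1(\eta)^*$ of the difference). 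Second, I would record the trace-kernel estimate: if $h \in \cstar(E)$ is positive with $\tau(h) = 0$, then $\tau(xh) = 0$ for all $x$, which follows from Cauchy--Schwarz for the form $(x,y) \mapsto \tau(x^*y)$ since $\tau(h^{1/2\,*}h^{1/2}) = \tau(h) = 0$.

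The heart of the argument is a dichotomy at each path in $\operatorname{supp}(\xi)$, exploiting that this set is compact in $E^k$. Fix $\alpha_0 \in \operatorname{supp}(\xi)$. If $r(\alpha_0) \neq s(\alpha_0)$, I separate these two points by open sets with disjoint closures and pull them back under $r$ and $s$ to obtain an open neighborhood $U$ of $\alpha_0$ on which the range and source stay separated. If instead $r(\alpha_0) = s(\alpha_0)$, then $\alpha_0$ is a cycle, so the hypothesis gives $s(\alpha_0) \notin \operatorname{supp}(\mu)$; I fix an open neighborhood $W$ of $s(\alpha_0)$ with $W \cap \operatorname{supp}(\mu) = \emptyset$, hence $\mu(W) = 0$, and take $U = r^{-1}(W) \cap s^{-1}(W)$, a neighborhood of $\alpha_0$ whose source image lies in the $\mu$-null set $W$. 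Covering the compact set $\operatorname{supp}(\xi)$ by finitely many such neighborhoods and choosing a subordinate partition of unity writes $\xi$ as a finite sum of functions each supported in a single $U$ of one of the two types; by linearity of $\pi^k$ I may assume $\operatorname{supp}(\xi)$ lies in one such $U$.

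In the non-cycle case, $r(\operatorname{supp}(\xi))$ and $s(\operatorname{supp}(\xi))$ are disjoint compact sets, so Urysohn's lemma provides $a \in C_c(E^0)$ equal to $1$ on the former and $0$ on the latter; then $\pi^0(a)\pi^k(\xi) = \pi^k(\xi)$ and $\pi^k(\xi)\pi^0(a) = 0$, so the trace property forces $\tau(\pi^k(\xi)) = \tau(\pi^0(a)\pi^k(\xi)) = \tau(\pi^k(\xi)\pi^0(a)) = 0$. In the cycle case, I choose $a \in C_c(E^0)$ with $0 \le a \le 1$, equal to $1$ on $s(\operatorname{supp}(\xi))$ and supported in $W$; then $\pi^k(\xi)\pi^0(a) = \pi^k(\xi)$, while $h := \pi^0(a)$ is positive with $\tau(h) = \int_{E^0} a \, d\mu = 0$ by \eqref{eqn:TraceToMeasure} since $\operatorname{supp}(a) \subseteq W$ is $\mu$-null, so the trace-kernel estimate gives $\tau(\pi^k(\xi)) = \tau(\pi^k(\xi)h) = 0$. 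Summing over the partition of unity yields $\tau(\pi^k(\xi)) = 0$ in general, and Lemma \ref{lem:TracesOnCuntzPimsnerAlgebras2} finishes the proof. I expect the only real obstacle to be the localization step: one must check carefully that, using only local compactness and Hausdorffness of $E^0$ together with the continuity of $r$ and $s$ on $E^k$, the neighborhoods can be chosen with the stated separation (disjoint range/source images away from cycles, and source image inside $W$ near a cycle). Once the geometry is set up correctly, the two cases are routine manipulations of the Toeplitz relations and the trace.
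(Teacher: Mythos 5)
Your proposal is correct and follows essentially the same route as the paper's proof: reduce via Lemma \ref{lem:TracesOnCuntzPimsnerAlgebras2} to showing $\tau(\pi^k(\xi))=0$, localize $\operatorname{supp}(\xi)$ by a partition of unity according to the cycle/non-cycle dichotomy, kill the non-cycle pieces by traciality with a Urysohn function separating range from source, and kill the cycle pieces by Cauchy--Schwarz against a positive function vanishing on $\operatorname{supp}(\mu)$. The only differences (swapping the roles of $r$ and $s$ in the separating function, and packaging the Cauchy--Schwarz step as the abstract fact that $\tau(h)=0$ for $h\geq 0$ forces $\tau(xh)=0$) are cosmetic.
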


\begin{proof}
Let $\tau$ be a tracial state on $\cstar(E)$.  Fix $k \geq 1$ and let $\xi \in C_c(E^k)$ be given.  By Lemma \ref{lem:TracesOnCuntzPimsnerAlgebras2}, it's enough to show $\tau(\pi^k(\xi)) = 0$.

If $\alpha \in E^k$ with $s(\alpha) \neq r(\alpha)$, then there are disjoint open neighborhoods $U_\alpha, V_\alpha \subseteq E^0$ of $s(\alpha)$ and $r(\alpha)$, respectively.  Now, $s^{-1}(U_\alpha) \cap r^{-1}(V_\alpha) \subseteq E^k$ is an open neighborhood of $\alpha$ and hence contains a compact neighborhood $K_\alpha$ of $\alpha$.  If $\alpha \in E^k$ and $s(\alpha) = r(\alpha)$, then $s(\alpha) \notin \operatorname{supp}(\mu)$.  As $\operatorname{supp}(\mu)$ is closed, there is a compact neighborhood $K_\alpha \subseteq E^k$ of $\alpha$ with $s(K_\alpha) \cap \operatorname{supp}(\mu) = \emptyset$.

The interiors of the sets $K_\alpha$, $\alpha \in E^k$, form an open cover of $E^k$.  Fix $\alpha(1), \ldots, \alpha(n) \in E^k$ such that the interiors of the $K_{\alpha(i)}$ form an open cover of the compact set $\operatorname{supp}(\xi) \subseteq E^k$.  There are functions $\xi_i \in C_c(E^k)$ such that $\sum_i \xi_i = \xi$ and each $\xi_i$ is supported in the interior on $K_{\alpha(i)}$.  To show $\tau(\pi^k(\xi)) = 0$, it's enough to show $\tau(\pi^k(\xi_i)) = 0$ for all $i$.

If $s(\alpha(i)) \neq r(\alpha(i))$, the sets $s(K_{\alpha(i)})$ and $r(K_{\beta(i)})$ are disjoint compact subsets of $E^0$.  Hence there is an $a \in C_c(E^0)$ with $a_i|s(K_{\alpha(i)}) = 1$ and $a|r(K_{\alpha(i)}) = 0$.  Now, $\xi_i a = \xi_i$ and $a \xi_i = 0$.  Thus
\[ \tau(\pi^k(\xi_i)) = \tau(\pi^k(\xi_i)\pi^0(a_i)) = \tau(\pi^0(a_i) \pi^k(\xi_i)) = 0. \]

If $s(\alpha(i)) = r(\alpha(i))$, then $s(K_{\alpha(i)})$ is a compact subset of $E^0$ disjoint from $\operatorname{supp}(\mu)$.  There is a positive function $a \in C_c(E^0)$ with $a_i|s(K_{\alpha(i)}) = 1$ and $a|\operatorname{supp}(\mu) = 0$.  Then $\xi_i a^{1/2} = \xi_i$ and $\tau(\pi^0(a)) = \int a \, d\mu = 0$.  Now by the Cauchy-Schwartz inequality,
\[ |\tau(\pi^k(\xi_i))| = |\tau(\pi^k(\xi_i)\pi^0(a)^{1/2})| = \tau(\pi^k(\xi_i)^*\pi^k(\xi_i))^{1/2} \tau(\pi^0(a))^{1/2} = 0. \]
Hence $\tau(\pi^k(\xi_i)) = 0$.

We have shown $\tau(\pi^k(\xi_i)) = 0$ for all $i = 1, \ldots, n$.  Hence $\tau(\pi^k(\xi)) = 0$.  By Lemma \ref{lem:TracesOnCuntzPimsnerAlgebras2}, $\tau$ is gauge invariant.
\end{proof}

\begin{corollary}\label{cor:NoCycles}
If $E$ has no cycles, every tracial state on $\cstar(E)$ is gauge invariant.
\end{corollary}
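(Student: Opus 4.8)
The plan is to deduce this immediately from Proposition \ref{prop:NoCycles}, whose hypothesis becomes vacuous in the cycle-free setting. Concretely, I would fix an arbitrary tracial state $\tau$ on $\cstar(E)$ and let $\mu$ denote the invariant measure on $E$ induced by $\tau$ via \eqref{eqn:TraceToMeasure} (such a $\mu$ exists and is an invariant measure by the first half of Theorem \ref{thm:InvariantTrace}).

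Now I would observe that the hypothesis of Proposition \ref{prop:NoCycles}, namely that $s(\alpha) \notin \operatorname{supp}(\mu)$ for every cycle $\alpha \in E^*$, is a statement quantified over the set of cycles in $E$. Since $E$ has no cycles by assumption, this set is empty, so the condition is satisfied vacuously regardless of $\mu$. Proposition \ref{prop:NoCycles} then applies directly and yields that $\tau$ is gauge invariant. As $\tau$ was an arbitrary tracial state, every tracial state on $\cstar(E)$ is gauge invariant.

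I expect no real obstacle here: all the substantive work has already been carried out in Proposition \ref{prop:NoCycles} (and, underlying it, in Lemma \ref{lem:TracesOnCuntzPimsnerAlgebras2}), and the corollary is simply the specialization of that proposition to the case where the quantification over cycles is empty. The only point worth stating explicitly is that a cycle-free graph contains no cycles at all, so the condition on $\operatorname{supp}(\mu)$ imposes no constraint.
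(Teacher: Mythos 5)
Your proposal is correct and is exactly the argument the paper intends: the corollary is stated without proof precisely because the hypothesis of Proposition \ref{prop:NoCycles} (that $s(\alpha) \notin \operatorname{supp}(\mu)$ for every cycle $\alpha$) is vacuously satisfied when $E$ has no cycles. Nothing further is needed.
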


To prove Conjecture \ref{conj:Freeness}, it is enough to show if $\mu$ is an invariant measure on $E$, $\alpha$ is a cycle in $E$, and $s(\alpha) \in \operatorname{supp}(\mu)$, then $s(\alpha)$ is periodic in the sense of \cite[Definition 7.1]{Katsura:TGA3}.  Indeed, by definition, a free topological graph has no periodic vertices and hence there would be no cycles with source in $\operatorname{supp}(\mu)$.  Then Proposition \ref{prop:NoCycles} would prove Conjecture \ref{conj:Freeness}.  For a discrete graphs, there is an easy direct proof of this is given below in Corollary \ref{cor:ConditionK}.  For topological graphs, a similar approach should work, but we could not overcome the topological technicalities.

Using our result Proposition \ref{prop:NoCycles}, we can recover the two special cases of Conjecture \ref{conj:Freeness} mentioned above.  Recall a cycle $\alpha = \alpha_1 \ldots \alpha_n \in E^n$ is called \emph{simple} if $\alpha_i \neq \alpha_n$ for each $1 \leq i < n$.  A discrete graph $E$ satisfies condition (K), if there is no vertex $v \in E^0$ which is the source of a unique simple cycle.

\begin{corollary}
If $\sigma : \mathbb{Z} \curvearrowright X$ is a free action on a locally compact Hausdorff space $X$, then every tracial state on $C_0(X) \rtimes_\sigma \mathbb{Z}$ is gauge invariant.
\end{corollary}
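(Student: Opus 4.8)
The plan is to realize the crossed product as a topological graph algebra and then invoke Corollary \ref{cor:NoCycles}. Concretely, I would define a topological graph $E$ by setting $E^0 = E^1 = X$ with source map $s = \mathrm{id}_X$ and range map $r = \sigma$. Since $\sigma$ is a homeomorphism, $s$ is (trivially) a local homeomorphism and $r$ is continuous, so $E$ is a genuine topological graph; moreover $r$ is a proper surjection onto $X$, whence $E^0_\reg = X$ and $E$ has no singular vertices. By Katsura's identification of homeomorphism algebras among topological graph algebras (see \cite{Katsura:TGA1}), there is a $\ast$-isomorphism $\cstar(E) \cong C_0(X) \rtimes_\sigma \mathbb{Z}$ carrying the gauge action $\gamma$ on $\cstar(E)$ to the dual action of $\mathbb{T} = \hat{\mathbb{Z}}$ on the crossed product. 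It therefore suffices to prove that every tracial state on $\cstar(E)$ is gauge invariant.

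The next step is a direct analysis of the paths in $E$. Because an edge is just a point of $X$, the relation $s(\alpha_i) = r(\alpha_{i+1})$ reads $\alpha_i = \sigma(\alpha_{i+1})$, so a path of length $n$ is uniquely determined by its first edge $x := \alpha_1$, with $\alpha_j = \sigma^{1-j}(x)$ for $1 \le j \le n$. Its range is $r(\alpha_1) = \sigma(x)$ and its source is $s(\alpha_n) = \alpha_n = \sigma^{1-n}(x)$. Hence such a path is a cycle precisely when $\sigma(x) = \sigma^{1-n}(x)$, equivalently when $\sigma^{n}(x) = x$.

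Now freeness enters: since the action $\sigma : \mathbb{Z} \curvearrowright X$ is free, the only integer $n$ for which $\sigma^n(x) = x$ holds for some $x \in X$ is $n = 0$. As a cycle has length $n \ge 1$, there are no cycles in $E$. Corollary \ref{cor:NoCycles} then shows that every tracial state on $\cstar(E) \cong C_0(X) \rtimes_\sigma \mathbb{Z}$ is gauge invariant, which is the desired conclusion.

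I do not expect a serious obstacle here; the only delicate point is the bookkeeping in the first paragraph, namely checking that the chosen conventions for $s$ and $r$ really produce $C_0(X) \rtimes_\sigma \mathbb{Z}$ (rather than, say, the crossed product by $\sigma^{-1}$) and that the gauge action is matched with the dual action. This is routine given Katsura's results, and in any event the orientation is immaterial: one has $C_0(X) \rtimes_\sigma \mathbb{Z} \cong C_0(X) \rtimes_{\sigma^{-1}} \mathbb{Z}$, while freeness of $\sigma$ is equivalent to freeness of $\sigma^{-1}$, so the cycle computation kills periodic points regardless of the chosen convention.
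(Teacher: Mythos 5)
Your proof is correct and follows essentially the same route as the paper: both realize $(X,\sigma)$ as a topological graph with $E^0 = E^1 = X$ (the paper takes $s=\sigma$, $r=\mathrm{id}$, you take the mirrored convention $s=\mathrm{id}$, $r=\sigma$, which as you note is immaterial), observe that cycles correspond exactly to periodic points so that freeness rules them out, and then invoke Corollary \ref{cor:NoCycles}. Your version merely spells out the path bookkeeping and the identification with the crossed product in more detail than the paper does.
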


\begin{proof}
Viewing $(X, \sigma)$ as a topological graph $E$ with $E^0 = E^1 = X$, $s = \sigma$, and $r = \operatorname{id}$, a vertex $v \in E^0$ is the source of a cycle if and only if $v$ is a periodic point for the action $\sigma$.  Since the action is free, $E$ has no cycles.
\end{proof}

\begin{corollary}\label{cor:ConditionK}
If $E$ is a discrete graph satisfying condition (K), then every tracial state on $\cstar(E)$ is gauge invariant.
\end{corollary}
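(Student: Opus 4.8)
The plan is to route everything through Proposition \ref{prop:NoCycles}. Let $\tau$ be a tracial state on $\cstar(E)$ and let $\mu$ be the invariant measure on $E$ induced by $\tau$ via \eqref{eqn:TraceToMeasure}. Since $E^0$ is discrete, $\operatorname{supp}(\mu) = \{ v \in E^0 : \mu(\{v\}) > 0 \}$, so by Proposition \ref{prop:NoCycles} it suffices to show $\mu(\{v\}) = 0$ whenever $v$ is the base of a cycle, i.e. $v = r(\beta) = s(\beta)$ for some path $\beta$ of length $\geq 1$. For a path $\gamma = \gamma_1 \cdots \gamma_n$ I write $p_v = \pi^0(\chi_{\{v\}})$ for the vertex projection and $s_\gamma = \pi^n(\chi_{\{\gamma\}}) = \pi^1(\chi_{\{\gamma_1\}}) \cdots \pi^1(\chi_{\{\gamma_n\}})$ for the associated partial isometry coming from the universal covariant representation.

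First I would record the combinatorial content of condition (K). Call $\gamma$ a \emph{first-return path} at $v$ if $r(\gamma) = s(\gamma) = v$ and $s(\gamma_i) \neq v$ for $1 \leq i < |\gamma|$. Every first-return path is a simple cycle in the sense defined above, and if $v$ admitted a \emph{unique} first-return path $\gamma$, then every cycle based at $v$ would be a power $\gamma^k$ (each return to $v$ completes a copy of $\gamma$); since $\gamma^k$ fails to be simple for $k \geq 2$ (its last edge $\gamma_{|\gamma|}$ already occurs at position $|\gamma|$), $v$ would be the base of exactly one simple cycle, contradicting condition (K). As a vertex on a cycle carries at least one first-return path, condition (K) thus forces at least two distinct first-return paths $\gamma_1 \neq \gamma_2$ at $v$. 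Two distinct first-return paths are incomparable, i.e. neither is a prefix of the other: a proper prefix of a first-return path ending at $v$ would produce an internal return to $v$, which is excluded.

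The C*-algebraic step is then immediate. Fix a cycle base $v$, assume condition (K), and choose incomparable first-return paths $\gamma_1 \neq \gamma_2$ at $v$. The Toeplitz relations give $s_{\gamma_i}^* s_{\gamma_i} = \pi^0(\langle \chi_{\{\gamma_i\}}, \chi_{\{\gamma_i\}} \rangle) = p_v$ and $s_{\gamma_i} s_{\gamma_i}^* \leq p_v$, so each $q_i := s_{\gamma_i} s_{\gamma_i}^*$ is a projection dominated by $p_v$ with $\tau(q_i) = \tau(s_{\gamma_i}^* s_{\gamma_i}) = \tau(p_v) = \mu(\{v\})$. Incomparability gives $s_{\gamma_1}^* s_{\gamma_2} = 0$ (the product telescopes to $\pi^0(\langle \chi_{\{e\}}, \chi_{\{e'\}} \rangle) = 0$ at the first position where $\gamma_1$ and $\gamma_2$ disagree on an edge), so $q_1 \perp q_2$ and hence $q_1 + q_2 \leq p_v$. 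Applying the positive trace $\tau$ yields $2\mu(\{v\}) \leq \mu(\{v\})$, whence $\mu(\{v\}) = 0$. Therefore no cycle base lies in $\operatorname{supp}(\mu)$, and Proposition \ref{prop:NoCycles} shows $\tau$ is gauge invariant.

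I expect the only delicate point to be the combinatorial lemma matching ``unique simple cycle'' with ``unique first-return path'', and in particular the verification that distinct first-return paths are incomparable, which is exactly what makes the orthogonality $s_{\gamma_1}^* s_{\gamma_2} = 0$ available. Once two orthogonal subprojections of $p_v$, each of trace $\mu(\{v\})$, are in hand, the trace estimate forcing $\mu(\{v\}) = 0$ is routine.
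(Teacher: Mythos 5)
Your proof is correct, and its overall skeleton matches the paper's: both reduce to Proposition \ref{prop:NoCycles} by showing $\tau(p_v) = 0$ for every vertex $v$ lying on a cycle, and both do so by producing two partial isometries with initial projection $p_v$ whose range projections are mutually orthogonal subprojections of $p_v$, so that traciality forces $2\tau(p_v) \leq \tau(p_v)$. Where you genuinely differ is the combinatorial mechanism that delivers the orthogonality. The paper takes the two distinct simple cycles $\alpha' \in E^k$, $\beta' \in E^\ell$ at $v$ furnished by condition (K) and, when $k \neq \ell$, passes to the powers $(\alpha')^\ell, (\beta')^k \in E^{k\ell}$; distinct paths of \emph{equal} length automatically satisfy $s_\alpha^* s_\beta = 0$, but one must then check the powers really are distinct, which the paper does by counting occurrences of the common last edge ($\ell$ times in one, $k$ times in the other, using simplicity). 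You instead work with first-return paths: condition (K) yields two distinct first-return paths at $v$ (via your lemma that a unique first-return path would force a unique simple cycle), and distinct first-return paths are never prefixes of one another, so the telescoping product $s_{\gamma_1}^* s_{\gamma_2}$ vanishes with no length normalization at all. The trade-off is roughly even: the paper's power trick avoids any first-return analysis but needs the slightly fiddly edge-counting distinctness argument; your route front-loads all the combinatorics into the first-return lemma (uniqueness of first-return paths implies uniqueness of the simple cycle, plus incomparability), after which the operator-algebraic step works directly for paths of unequal lengths. Both arguments are complete and equally elementary, and both correctly use only the trace property, the Toeplitz relations, and the positivity of $\tau$.
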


\begin{proof}
For $\alpha \in E^k$, $k \geq 1$, let $\delta_\alpha \in C_c(E^k)$ denote the indicator function of $\{\alpha\}$ and let $s_\alpha = \pi^k(\delta_\alpha) \in \cstar(E)$.  Similarly for $v \in E^0$, we let $p_v = \pi^0(\delta_v) \in \cstar(E)$.

Suppose $\tau$ is a tracial state on $\cstar(E)$ and let $\mu$ denote the induced invariant measure on $E$.  Assume $v \in E^0$ is the source of a cycle.  We claim there is an $n \geq 1$ and distinct cycles $\alpha, \beta \in E^n$ with $s(\alpha) = s(\beta) = v$.  There are distinct simple cycles $\alpha' \in E^k$ and $\beta' \in E^\ell$ with $s(\alpha') = s(\beta')$.  If $k = \ell$, set $n = k = \ell$, $\alpha = \alpha'$, and $\beta = \beta'$.  Otherwise, if $k \neq \ell$, define $n := k \ell$, $\alpha := (\alpha')^\ell \in E^n$ and $\beta := (\beta')^k \in E^n$.  If $\alpha_n \neq \beta_n$, then $\alpha$ and $\beta$ are distinct.  If $\alpha_n = \beta_n$, then the edge $\alpha_n = \alpha'_k = \beta'_\ell$ occurs exactly $\ell$ times in the path $\alpha$ and $k$ times in the path $\beta$ since $\alpha'$ and $\beta'$ are simple.  As $k \neq \ell$, we have $\alpha \neq \beta$.

Now since $\alpha, \beta \in E^n$ are distinct cycles, $s_\alpha^* s_\beta = 0$.  It follows that $s_\alpha s_\alpha^* + s_\beta s_\beta^* \leq p_v$ and $s_\alpha^* s_\alpha = s_\beta^* s_\beta = p_v$.  As $\tau$ is a tracial state, $0 \leq 2\tau(p_v) \leq \tau(p_v)$ and $\tau(p_v) = 0$.  So $v \notin \operatorname{supp}(\mu)$ and the result follows Proposition \ref{prop:NoCycles}.
\end{proof}

\section{Totally Disconnected Graphs and K-Theory}

In this section, we will show the gauge invariant tracial states on $\cstar(E)$ can be detected in the K-theory of $\cstar(E)$ when $E^0$ is totally disconnected.  First we recall the Pimsner-Voiculescu sequence for topological graphs given in \cite{Katsura:TGA1}.  For a topological graph $E$, view the Hilbert module $H(E)$ as a $C_0(E^0_\reg)$--$C_0(E^0)$ \cstar-correspondence by restricting scalars on the left.  Then taking the Kasparov product with the \cstar-correspondence $H(E)$ induces a morphism $[E] : \mathrm{K}^*(E^0_\reg) \rightarrow \mathrm{K}^*(E^0)$ on topological K-theory.  There is a natural six term exact sequence
\begin{equation}\label{eqn:PimsnerVoiculescuSequence}
  \begin{tikzcd}
   \mathrm{K}^0(E^0_\reg) \arrow{r}{\iota - [E]} & \mathrm{K}^0(E^0) \arrow{r}{\pi^0_*} & \mathrm{K}_0(\cstar(E)) \arrow{d} \\
   \mathrm{K}_1(\cstar(E)) \arrow{u} & \mathrm{K}^1(E^0) \arrow{l}{\pi^0_*} & \mathrm{K}^1(E^0_\reg) \arrow{l}{\iota - [E]}
  \end{tikzcd}
\end{equation}
When $E$ is a totally disconnected topological graph, the six term exact sequence takes a simpler form (Proposition \ref{prop:PimsnerVoiculescuSequence}).  First we introduce some notation.

For a totally disconnected space $X$, we view $\mathrm{K}^0(X)$ as the Grothendieck group of the finitely generated Hilbert modules over $C_0(X)$.  Given a compact open set $U \subseteq X$, the ideal $C(U) \unlhd C_0(X)$ is a finitely generated Hilbert module over $C_0(X)$.  The Hilbert modules of the form $C(U)$ generate $\mathrm{K}^0(X)$ as an abelian group.  Moreover, there is an isomorphism $\rho_X : \mathrm{K}^0(X) \rightarrow C_0(X, \mathbb{Z})$ given by $C(U) \mapsto \chi_U$ for each compact open set $U \subseteq X$, where $\chi_U$ is the indicator function of the set $U$.

If $X$ and $Y$ are totally disconnected spaces, $H$ is a Hilbert $C_0(X)$-module, and a representation $C_0(Y) \rightarrow \mathbb{K}(H)$ is given, then the \cstar-correspondence $H$ induces a morphism $[H] : \mathrm{K}^0(Y) \rightarrow \mathrm{K}^0(X)$ given by $[K] \mapsto [K \otimes_{C_0(Y)} H]$ for every finitely generated Hilbert $C_0(Y)$-module $K$.

Specializing to a totally disconnected topological graph $E$, there is an faithful representation $C_0(E^0_\reg) \rightarrow \mathbb{K}(H(E))$.  The morphism $[E] : \mathrm{K}^0(E^0_\reg) \rightarrow \mathrm{K}^0(E^0)$ is the morphism induced by the \cstar-correspondence $H(E)$.

\begin{proposition}\label{prop:PimsnerVoiculescuSequence}
For a totally disconnected topological graph $E$, there is an exact sequence
\[ \begin{tikzcd}
   0 \arrow{r} &\mathrm{K}_1(\cstar(E)) \arrow{r} & C_0(E^0_\reg, \mathbb{Z}) \arrow{r}{\iota - \psi} & C_0(E^0, \mathbb{Z}) \arrow{r} & \mathrm{K}_0(\cstar(E)) \arrow{r} & 0.
 \end{tikzcd} \]
where the map $\psi : C_0(E^0_\reg, \mathbb{Z}) \rightarrow C_0(E^0, \mathbb{Z})$ is given by
\[ \psi(f)(v) = \sum_{s(e) = v} f(r(e)) \]
for all $f \in C_0(E^0_\reg, \mathbb{Z})$ and $v \in E^0$.
\end{proposition}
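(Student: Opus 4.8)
The plan is to feed the totally disconnected hypothesis into the general six term sequence \eqref{eqn:PimsnerVoiculescuSequence} and then identify each surviving term and map explicitly. First I would record that $\mathrm{K}^1(E^0_\reg) = \mathrm{K}^1(E^0) = 0$: topological $\mathrm{K}^1$ of any totally disconnected locally compact Hausdorff space vanishes, since such a space is a direct limit (over compact open subsets) of profinite pieces, each of which is an inverse limit of finite discrete spaces, and $\mathrm{K}^1$ commutes with the corresponding inductive limits of function algebras, all of whose $\mathrm{K}^1$-groups are trivial. Substituting these zeros into \eqref{eqn:PimsnerVoiculescuSequence}, the connecting map out of $\mathrm{K}_0(\cstar(E))$ lands in $0$ and the map into $\mathrm{K}_1(\cstar(E))$ originates from $0$. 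A short diagram chase then shows the six term sequence collapses to
\[ 0 \to \mathrm{K}_1(\cstar(E)) \xrightarrow{\partial} \mathrm{K}^0(E^0_\reg) \xrightarrow{\iota - [E]} \mathrm{K}^0(E^0) \xrightarrow{\pi^0_*} \mathrm{K}_0(\cstar(E)) \to 0, \]
with $\partial$ injective by exactness at $\mathrm{K}_1(\cstar(E))$ and $\pi^0_*$ surjective by exactness at $\mathrm{K}_0(\cstar(E))$.

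Next I would transport this sequence along the isomorphisms $\rho_{E^0_\reg} : \mathrm{K}^0(E^0_\reg) \cong C_0(E^0_\reg, \mathbb{Z})$ and $\rho_{E^0} : \mathrm{K}^0(E^0) \cong C_0(E^0, \mathbb{Z})$, both sending $[C(U)] \mapsto \chi_U$. Since the classes $[C(U)]$ for compact open $U \subseteq E^0_\reg$ generate $\mathrm{K}^0(E^0_\reg)$ as an abelian group, it is enough to evaluate $\iota$ and $[E]$ on these generators. The map $\iota$ is induced by the open inclusion $E^0_\reg \hookrightarrow E^0$; a compact open $U \subseteq E^0_\reg$ is again compact open in $E^0$, so under $\rho$ the map $\iota$ becomes the extension-by-zero inclusion $C_0(E^0_\reg, \mathbb{Z}) \hookrightarrow C_0(E^0, \mathbb{Z})$, that is, $\chi_U \mapsto \chi_U$.

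The crux is computing $[E]$ on a generator $[C(U)]$. By definition $[E][C(U)] = [C(U) \otimes_{C_0(E^0_\reg)} H(E)]$, where $H(E)$ carries the left action $a \cdot \xi = (a \circ r)\xi$. I would identify this balanced tensor product with the submodule $(\chi_U \circ r) H(E) \subseteq H(E)$ of functions supported on $r^{-1}(U)$. Because $U \subseteq E^0_\reg$ is compact and $r$ restricts to a proper map on $r^{-1}(E^0_\reg)$, the set $r^{-1}(U)$ is compact, and it is open by continuity of $r$; hence $\chi_{r^{-1}(U)} \in C_c(E^1)$ and the submodule is finitely generated projective over $C_0(E^0)$. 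Its class in $C_0(E^0, \mathbb{Z})$ is the fiberwise rank function. The fiber of $H(E)$ over $v$ is $\ell^2(s^{-1}(v))$ by the inner product in Definition \ref{defn:GraphCorrespondence}, so the fiber of the submodule over $v$ is $\ell^2(\{ e \in s^{-1}(v) : r(e) \in U \})$, whose dimension is $\#\{ e : s(e) = v,\ r(e) \in U \} = \sum_{s(e) = v} \chi_U(r(e)) = \psi(\chi_U)(v)$; this count is finite and defines an element of $C_c(E^0, \mathbb{Z})$ by Lemma \ref{lem:LocalHomeomorphism}. Thus $\rho_{E^0}([E][C(U)]) = \psi(\chi_U)$, so under the isomorphisms $\rho$ the map $\iota - [E]$ becomes $\iota - \psi$, which completes the identification and yields the stated sequence.

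The main obstacle I expect is exactly this last computation: making precise the identification $C(U) \otimes_{C_0(E^0_\reg)} H(E) \cong (\chi_U \circ r) H(E)$ as Hilbert $C_0(E^0)$-modules and verifying the resulting module is finitely generated projective with the asserted rank. This uses total disconnectedness (to produce the compact open set $r^{-1}(U)$ and a locally constant, hence finite, edge count) together with the properness of $r$ over $E^0_\reg$; once these are in place, the rank is read off fiberwise from the inner product on $H(E)$ and agrees with $\psi$ on the generators, after which everything else is formal homological algebra on the collapsed sequence.
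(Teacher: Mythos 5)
Your proposal has the same skeleton as the paper's proof: kill the $\mathrm{K}^1$ terms by total disconnectedness, collapse \eqref{eqn:PimsnerVoiculescuSequence} to a four-term exact sequence, identify $\iota$ with extension by zero, and compute $[E]$ on a generator $[C(U)]$ via the isomorphism $C(U) \otimes_{C_0(E^0_\reg)} H(E) \cong C(r^{-1}(U))$, with $r^{-1}(U)$ compact open exactly as you argue. The difference is the finish. The paper partitions $r^{-1}(U)$ into finitely many compact open sets $V_1, \ldots, V_n$ on which $s$ restricts to a homeomorphism, giving an explicit isomorphism $C(r^{-1}(U)) \cong \bigoplus_{i=1}^n C(s(V_i))$; the class is then a sum of generators, $\rho_{E^0}$ applies directly, and the count $\sum_i \chi_{s(V_i)}(v) = \#\left( r^{-1}(U) \cap s^{-1}(v) \right)$ concludes. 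You instead invoke the principle that $\rho_{E^0}$ is computed by fiberwise rank and read the rank off the localized inner product. That is valid, but two assertions in your sketch need supplying: first, since the paper defines $\rho_X$ only on the generators $[C(U)] \mapsto \chi_U$, the statement ``class equals rank function'' requires noting that fiberwise rank is additive and isomorphism-invariant, hence a group homomorphism into integer-valued functions on $X$ agreeing with $\rho_X$ on generators and therefore equal to it; second, finite generation and projectivity of $C(r^{-1}(U))$ does \emph{not} follow merely from $\chi_{r^{-1}(U)} \in C_c(E^1)$ (the module is not singly generated when distinct edges in $r^{-1}(U)$ share a source) --- one needs either Proposition \ref{prop:LeftMultiplicationMap}, which makes $\lambda(\chi_{r^{-1}(U)})$ a compact projection whose range is finitely generated projective, or the paper's partition, which settles projectivity and the rank computation in one stroke. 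Your route buys a reusable, trivialization-free principle ($\mathrm{K}^0$ of a totally disconnected space is detected by rank); the paper's route buys self-containedness, since the same partition proves every claim you defer.
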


\begin{proof}
Since $E^0$ and $E^0_\reg$ are totally disconnected, $\mathrm{K}^1(E^0_\reg) = \mathrm{K}^1(E^0) = 0$.  Hence in view of \eqref{eqn:PimsnerVoiculescuSequence}, it is enough to show
\begin{equation}\label{eqn:KTheorySquare}
  \begin{tikzcd} \mathrm{K}^0(E^0_\reg) \arrow{r}{[E]} \arrow{d}{\rho_{E^{^0}_\reg}} & \mathrm{K}^0(E^0) \arrow{d}{\rho_{E^{^0}}} \\ C_0(E^0_\reg, \mathbb{Z}) \arrow{r}{\psi} & C_0(E^0, \mathbb{Z}) \end{tikzcd}
\end{equation}
commutes.

Fix a compact open set $U \subseteq E^0_\reg$.  Then $C(U)$ is a singly generated Hilbert $C_0(E^0_\reg)$-module with generator $1_{C(U)}$.  Moreover, $r^{-1}(U)$ is a compact open subset of $E^1$ and hence is a Hilbert submodule of $H(E)$.  The map
\[ C(U) \otimes_{C_0(E^{^0}_\reg)} H(E) \rightarrow C(r^{-1}(U)) \qquad a \otimes f \mapsto (a \circ r)f \]
and the map
\[ C(r^{-1}(U)) \rightarrow C(U) \otimes_{C_0(E^{^0}_\reg)} H(E) \qquad g \mapsto 1_{C(U)} \otimes g \circ r \]
are inverse isomorphisms of Hilbert $C_0(E^0)$-modules.

Let $V_1, \ldots, V_n \subseteq E^1$ be a partition of $U$ into compact open sets such that $s|V_i$ is a homeomorphism onto $s(V_i)$ for each $i = 1, \ldots, n$, and let $\sigma_i : V_i \rightarrow s(V_i)$ denote the homeomorphism obtained by restricting $s$.  Then the map
\[ C(r^{-1}(U)) \rightarrow \bigoplus_{i=1}^n C(s(V_i)) \qquad f \mapsto (f \circ \sigma_i^{-1})_{i=1}^n \]
and the map
\[ \bigoplus_{i=1}^n C(s(V_i)) \rightarrow C(r^{-1}(U)) \qquad (g_i)_{i=1}^n \mapsto \sum_{i=1}^n g \circ \sigma_i \]
are inverse isomorphisms of Hilbert $C_0(E^0)$-modules.

Combining the last two paragraphs, we have
\[ [E](C(U)) = [C(U) \otimes H(E)] = [C(r^{-1}(U))] = \sum_{i=1}^n [C(s(V_i))] \]
in $\mathrm{K}^0(E^0)$.  Hence for $v \in E^0$,
\[ \rho_{E^{^0}}([E](C(U)))(v) = \sum_{i=1}^n \chi_{s(V_i)}(v) = \# ( r^{-1}(U) \cap s^{-1}(v) ) \]
where $\#S$ denotes the cardinality of a set $S$.  Similarly, for $v \in E^0$,
\[ \psi(\rho_{E^{^0}_\reg}(C(U)))(v) = \psi(\chi_U)(v) = \sum_{s(e) = v} \chi_U(r(e)) = \# ( r^{-1}(U) \cap s^{-1}(v) ). \]
Hence the diagram \eqref{eqn:KTheorySquare} commutes and this completes the proof.
\end{proof}

Given any \cstar-algebra $A$, the group $\mathrm{K}_0(A)$ has a natural order structure determined by the semigroup $\mathrm{K}_0(A)^+ \subseteq \mathrm{K}_0(A)$ consisting of all elements in $\mathrm{K}_0(A)$ defined by a matrix projection over $A$.  There is also a distinguished subset $\Sigma(A) \subseteq \mathrm{K}_0(A)^+$ called the \emph{scale} consisting of all elements in $\mathrm{K}_0(A)$ defined by projections in $A$.  If $A$ has an approximate unit consisting a projections, a \emph{state} on $\mathrm{K}_0(A)$ is a group morphism $\tau : \mathrm{K}_0(A) \rightarrow \mathbb{R}$ such that $\tau(x) \geq 0$ for all $x \in \mathrm{K}_0(A)^+$ and $\sup_{x \in \Sigma(A)} \tau(x) = 1$.  The collection of states on $\mathrm{K}_0(A)$ is denoted $S(\mathrm{K}_0(A))$.  Note that $S(\mathrm{K}_0(A))$ is convex and weak* compact.

For any \cstar-algebra $A$ with an approximate unit consisting of projections, there is a natural continuous affine map $T(A) \rightarrow S(\mathrm{K}_0(A))$ induced by restricting a tracial state to the set of projections.  Moreover, when $A$ is unital, Blackadar and R{\o}rdam have shown every state on $T(A) \rightarrow S(\mathrm{K}_0(A))$ is induced by a quasitrace on $A$ (see \cite{BlackadarRordam}).  This result also holds when $A$ has an approximate unit consisting of projections as can be shown by applying the unital result to the corners defined by these projections.  Moreover, when $A$ is exact, every quasitrace on $\cstar(E)$ is a trace as was shown by Haagerup in \cite{Haagerup} in the unital case and was extended to non-unital \cstar-algebras by Kirchberg in \cite{Kirchberg}.  Combining these results, when $A$ is a exact \cstar-algebra with an approximate unit consisting of projections, the canonical map $T(A) \rightarrow S(\mathrm{K}_0(A))$ is always surjective.  In particular this holds for $A = \cstar(E)$ when $E$ is a totally disconnected topological graph.

There is no known method for computing the order structure of $\mathrm{K}_0(\cstar(E))$ for a topological graph $E$, even when $E^0$ is totally disconnected. However, there are special cases.  When $E^0$ is discrete, the ordered $\mathrm{K}_0$-group is computed in \cite{Tomforde:OrderedKTheory}, and when $\cstar(E)$ is given by a dynamical system $\sigma : \mathbb{Z} \curvearrowright X$ with $X$ totally disconnected, the ordered $\mathrm{K}_0$-group is described in Theorem 5.2 of \cite{BoyleHandelman}.  In both cases, the order structure on $\mathrm{K}_0$ is determined by the Pimsner-Voiculescu sequence above.  It may be posible to give a similar description of the order structure for general topological graph algebras in the totally disconnected setting.  Computing the states on K-theory can be viewed as a first step towards such a result.  Indeed, for a simple, unital \cstar-algebra $A$, the order structure on the $\mathrm{K}_0$-group is completely determined, at least up to perforation, by the states.

Composting the map $T(\cstar(E)) \rightarrow S(\mathrm{K}_0(\cstar(E)))$ with the map obtained in \ref{thm:InvariantTrace} yields a continuous affine map $T(E) \rightarrow S(\mathrm{K}_0(\cstar(E)))$.  We will show this map is in fact bijective.

\begin{theorem}\label{thm:StatesOnK0}
Let $E$ be a topological graph with $E^0$ totally disconnected.  The canonical map $T(\cstar(E)) \rightarrow S(\mathrm{K}_0(\cstar(E)))$ restricts to an affine homeomorphism on $T(\cstar(E))^\gamma$.
\end{theorem}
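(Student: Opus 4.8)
The plan is to use Theorem \ref{thm:InvariantTrace} to replace $T(\cstar(E))^\gamma$ by the space $T(E)$ of invariant measures and then to analyze the composite
\[ T(E) \xrightarrow{\ \sim\ } T(\cstar(E))^\gamma \longrightarrow S(\mathrm{K}_0(\cstar(E))). \]
The structural input I would rely on is Proposition \ref{prop:PimsnerVoiculescuSequence}: the natural map $C_0(E^0, \mathbb{Z}) \to \mathrm{K}_0(\cstar(E))$, $\chi_U \mapsto [\pi^0(\chi_U)]$, is surjective, since $E^0$ is totally disconnected. The immediate consequence is that every state on $\mathrm{K}_0(\cstar(E))$ is determined by its values on the classes $[\pi^0(\chi_U)]$ as $U$ ranges over the compact open subsets of $E^0$, because any element of $\mathrm{K}_0(\cstar(E))$ is a finite integer combination of such classes. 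Moreover, if $\tau$ is a gauge invariant trace with associated invariant measure $\mu$, then the induced state records exactly the numbers $\tau(\pi^0(\chi_U)) = \mu(U)$.

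For injectivity, suppose two gauge invariant traces $\tau_1,\tau_2$ induce the same state on $\mathrm{K}_0(\cstar(E))$, and let $\mu_1,\mu_2$ be the corresponding invariant measures. By the observation above, $\mu_1(U) = \mu_2(U)$ for every compact open $U \subseteq E^0$. Since $E^0$ is totally disconnected, the compact open sets form a $\pi$-system generating the Borel $\sigma$-algebra, so the two Radon probability measures must coincide; Theorem \ref{thm:InvariantTrace} then gives $\tau_1 = \tau_2$. Thus the restricted map is injective.

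Surjectivity is the main point, and here I would exploit the external results already recorded before the theorem: for the exact \cstar-algebra $\cstar(E)$, which has an approximate unit of projections drawn from $\pi^0(C_0(E^0))$ since $E^0$ is totally disconnected, the canonical map $T(\cstar(E)) \to S(\mathrm{K}_0(\cstar(E)))$ is surjective. Given a state $\phi$, choose any trace $\tau \in T(\cstar(E))$ lifting it. By the first half of Theorem \ref{thm:InvariantTrace}, $\tau$ restricts on $C_0(E^0)$ to an invariant measure $\mu$, and the associated gauge invariant trace $\tau_\mu$ of the second half agrees with $\tau$ on all of $\pi^0(C_0(E^0))$, hence on every generator $[\pi^0(\chi_U)]$, hence induces the same state $\phi$. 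So $\phi$ is hit by a gauge invariant trace. The conceptual content is that the $\mathrm{K}_0$-state forgets everything about $\tau$ except its restriction to $C_0(E^0)$, so passing to the gauge invariant part loses no states. I expect the only real temptation for trouble — and the obstacle I would deliberately sidestep — is the alternative of constructing the measure directly from a $\mathrm{K}_0$-state and verifying the invariance \emph{inequality} of Definition \ref{defn:InvariantMeasure} on the singular part of $E^0$, where one cannot cleanly split a clopen set along the (merely closed) singular vertex set; routing through an arbitrary lift and Theorem \ref{thm:InvariantTrace} avoids this entirely.

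Finally, to upgrade the affine bijection to a homeomorphism I would verify continuity in both directions explicitly, again using total disconnectedness. The map is continuous and affine, being the restriction of the continuous affine canonical map precomposed with the homeomorphism of Theorem \ref{thm:InvariantTrace}; concretely, if $\mu_i \to \mu$ vaguely then $\mu_i(U) = \int \chi_U\, d\mu_i \to \int \chi_U\, d\mu = \mu(U)$ for each compact open $U$, since $\chi_U \in C_c(E^0)$, and this forces convergence of the induced states on the generators and hence on all of $\mathrm{K}_0(\cstar(E))$. Conversely, if the induced states converge, then $\mu_i(U) \to \mu(U)$ for every compact open $U$; since locally constant compactly supported functions (finite combinations of such $\chi_U$) are uniformly dense in $C_c(E^0)$, this upgrades to $\int f\, d\mu_i \to \int f\, d\mu$ for all $f \in C_c(E^0)$, i.e.\ vague convergence of the measures, so the inverse is continuous as well. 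Together these give the desired affine homeomorphism.
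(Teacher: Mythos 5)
Your proof is correct, but the key surjectivity step takes a genuinely different route from the paper's. The paper also lifts a state $f$ on $\mathrm{K}_0(\cstar(E))$ to a trace $\tau$ via the Blackadar--R{\o}rdam and Haagerup--Kirchberg results, but then it \emph{averages}: it sets $\tau_0 = \int_{\mathbb{T}} \tau\circ\gamma_z \, dz$ and shows $\tau_0$ induces the same state because each projection $p \in \mathbb{M}_n(\cstar(E))$ is joined to $\gamma_z(p)$ by a path of projections (strong continuity of $\gamma$ plus connectedness of $\mathbb{T}$), hence is unitarily equivalent to it, so $\tau(p) = \tau(\gamma_z(p)) = \tau_0(p)$. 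You instead invoke Proposition \ref{prop:PimsnerVoiculescuSequence} to see that $\mathrm{K}_0(\cstar(E))$ is generated by the classes $[\pi^0(\chi_U)]$, so the induced state remembers only $\tau \circ \pi^0$, and then you swap $\tau$ for the gauge invariant trace of Theorem \ref{thm:InvariantTrace} attached to the measure $\mu = \tau\circ\pi^0$. Your route is more elementary in that it avoids the homotopy/unitary-equivalence argument for projections, but it leans on the $\mathrm{K}$-theoretic computation of Proposition \ref{prop:PimsnerVoiculescuSequence}, which the paper's averaging argument never uses; the averaging argument is correspondingly more robust and would survive in settings without an explicit presentation of $\mathrm{K}_0$. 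Your injectivity and continuity arguments are the paper's in measure-theoretic clothing: the paper shows directly that gauge invariant traces agreeing (resp.\ converging) on $\pi^0(C_0(E^0))$ agree (resp.\ converge) everywhere, using density of the span of projections in $C_0(E^0)$, the dense span of the elements $\pi^k(\xi)\pi^\ell(\eta)^*$, and Lemma \ref{lem:ConvergesOfTracesOnCuntzPimsnserAlgebras}, whereas you transport the same statements to $T(E)$ through Theorem \ref{thm:InvariantTrace}.

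One small repair: your justification that two Radon probability measures agreeing on compact open sets coincide should not go through a $\pi$-system/Dynkin argument, since the paper explicitly does not assume second countability and the compact open sets need not generate the Borel $\sigma$-algebra. Instead use inner and outer regularity of Radon measures, or the Stone--Weierstrass density of $\operatorname{span}\{\chi_U : U \text{ compact open}\}$ in $C_0(E^0)$ (which you already invoke in the continuity argument) together with uniqueness in the Riesz representation theorem.
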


\begin{proof}
It is clear that the map is continuous and affine.  As noted above, every state $f$ on $S(\mathrm{K}_0(\cstar(E)))$ lifts to a tracial state $\tau$ on $\cstar(E)$.  Define a tracial state $\tau_0$ on $\cstar(E)$ by
\[ \tau_0(a) = \int_{\mathbb{T}} \tau(\gamma_z(a)) \, dz \]
and note that $\tau_0$ is gauge invariant.  Fix a projection $p \in \mathbb{M}_n(\cstar(E))$.  For each $z \in \mathbb{T}$, there is a path of projections in $\mathbb{M}_n(\cstar(E))$ connecting $p$ and $\gamma_z(p)$ since $\gamma$ is a strongly continuous action and $\mathbb{T}$ is connected.  Hence $p$ and $\gamma_z(p)$ are unitarily equivalent in the unitization of $\mathbb{M}_n(\cstar(E))$ (see Proposition 4.3.3 in \cite{Blackadar:KTheory}, for example) and in particular, $\tau(p) = \tau(\gamma_z(p))$.  It follows that $\tau(p) = \tau_0(p)$ for every projection $p \in \mathbb{M}_n(\cstar(E))$.  In particular, $\tau_0$ also induces the state $f$ on $\mathrm{K}_0(\cstar(E))$.

Now suppose $\tau_1$ and $\tau_2$ are two gauge invariant tracial states on $\cstar(E)$ which agree on $\mathrm{K}_0(\cstar(E))$.  Then the tracial states $\tau_1 \circ \pi^0$ and $\tau_2 \circ \pi^0$ induce the same state $\mathrm{K}_0(C_0(E^0))$.  Since $E^0$ is totally disconnected, $C_0(E^0)$ is spanned by projections and hence $\tau_1 \circ \pi^0 = \tau_2 \circ \pi^0$.  Now for $k \in \mathbb{N}$ and $\xi, \eta \in C_c(E^k)$,
\[ \tau_1(\pi^k(\xi) \pi^k(\eta)^*) = \tau_1(\pi^0(\langle \eta, \xi \rangle)) = \tau_2(\pi^0(\langle \eta, \xi \rangle)) = \tau_2(\pi^k(\xi)\pi^k(\eta)^*) \]
since both $\tau_1$ and $\tau_2$ are tracial states.  For distinct $k, \ell \in \mathbb{N}$, $\xi \in C_c(E^k)$, and $\eta \in C_c(E^\ell)$,
\[ \tau_1(\pi^k(\xi) \pi^\ell(\eta)^*) = \tau_2(\pi^k(\xi) \pi^\ell(\eta)^*) = 0 \]
since both $\tau_1$ and $\tau_2$ are gauge invariant.  So $\tau_1 = \tau_2$.

We have shown the canonical map $T(\cstar(E))^\gamma \rightarrow S(\mathrm{K}_0(\cstar(E)))$ is continuous, affine, and bijective.  To show the inverse in continuous, suppose $(\tau_i)$ is a net of gauge invariant tracial states on $\cstar(E)$ and $\tau$ is a gauge invariant tracial state on $\cstar(E)$.  Let $f_i$ and $f$ denote the states on $\mathrm{K}_0(\cstar(E))$ induced by the $\tau_i$ and $\tau$.  If $f_i \rightarrow f$, then $f_i \circ \pi^0_* \rightarrow f \circ \pi^0_*$ on $\mathrm{K}_0(C_0(E^0))$.  Since $C_0(E^0)$ is spanned by its projections, $\tau_i \circ \pi^0 \rightarrow \tau \circ \pi^0$ weak*.  By Lemma \ref{lem:ConvergesOfTracesOnCuntzPimsnserAlgebras}, $\tau_i \rightarrow \tau$ weak* and this completes the proof.
\end{proof}

For an simple, weakly unperforated ordered group $K$ with a distinguished order unit, positive cone is determined by the states on $K$; for $x \in K$, $x > 0$ if and only if $f(x) > 0$ for each $f \in S(K)$ (see Theorem 6.8.5 in \cite{Blackadar:KTheory}).  We were not able to determine when $\mathrm{K}_0(\cstar(E))$ is weakly unperforated for a minimal, totally disconnected topological graph $E$.  However, we can still describe the positive cone in $\mathrm{K}$-theory up to perforation.

\begin{corollary}\label{cor:OrderedKTheory}
Suppose a minimal topological graph in the sense of \cite[Definition 8.8]{Katsura:TGA3} such that $E^0$ is totally disconnected and compact.  Let $x \in \mathrm{K}_0(\cstar(E))$ be given and let $a\in C_0(E^0, \mathbb{Z})$ be such that $\pi^0(a) = x$.  Then $n x \geq 0$ for some $n \geq 1$ if and only if $\int a \, d\mu \geq 0$ for all $\mu \in T(E)$.
\end{corollary}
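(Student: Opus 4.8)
The plan is to push the entire question into the ordered group $\mathrm{K}_0(\cstar(E))$ and read off the answer from the correspondence between invariant measures and states already established. The enabling facts are: since $E^0$ is compact we have $\pi^0(\chi_{E^0}) = 1_{\cstar(E)}$, so $u := \pi^0_*(\chi_{E^0})$ is an order unit for $\mathrm{K}_0(\cstar(E))$; since $E$ is minimal, Katsura's simplicity criterion in \cite{Katsura:TGA3} makes $\mathrm{K}_0(\cstar(E))$ a simple ordered abelian group; and by Proposition \ref{prop:PimsnerVoiculescuSequence} the map $\pi^0_* : C_0(E^0, \mathbb{Z}) \to \mathrm{K}_0(\cstar(E))$ is surjective, so a representative $a$ of $x$ always exists. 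The crucial dictionary comes from combining Theorem \ref{thm:InvariantTrace} with Theorem \ref{thm:StatesOnK0}: the assignment $\mu \mapsto f_\mu$ is a bijection of $T(E)$ onto the full state space $S(\mathrm{K}_0(\cstar(E)))$, and since any $a \in C_0(E^0, \mathbb{Z})$ is a finite integer combination of indicators of compact open sets (hence of projections in $C_0(E^0)$), \eqref{eqn:TraceToMeasure} gives $f_\mu(x) = f_\mu(\pi^0_*(a)) = \int_{E^0} a \, d\mu$. Thus the condition that $\int_{E^0} a \, d\mu \ge 0$ for all $\mu \in T(E)$ is exactly the condition that $f(x) \ge 0$ for all $f \in S(\mathrm{K}_0(\cstar(E)))$.

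Granting this translation, the forward implication is a one-line computation: if $nx \ge 0$ then $f_\mu(nx) \ge 0$, so $\int_{E^0} a \, d\mu = f_\mu(x) \ge 0$ for every $\mu$. For the converse I would appeal to the structure theory of simple ordered groups with order unit behind \cite[Theorem 6.8.5]{Blackadar:KTheory}. The clean input is the strict statement, which holds \emph{without} assuming weak unperforation: in a simple ordered group with order unit, $f(x) > 0$ for all states $f$ already forces $x$ to be positive (indeed an order unit). The goal is then to convert the weak hypothesis $f(x) \ge 0$ into membership of some multiple $nx$ in the cone $\mathrm{K}_0(\cstar(E))^+$; since $\pi^0_*$ sends nonnegative functions to $\mathrm{K}_0(\cstar(E))^+$, this amounts to finding $n \ge 1$, a nonnegative $b \in C_0(E^0, \mathbb{Z})$, and $g \in C_0(E^0_\reg, \mathbb{Z})$ with $na = b + (\iota - \psi)(g)$, whence $nx = \pi^0_*(na) = \pi^0_*(b) \ge 0$.

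The main obstacle is precisely this last conversion, and it is where perforation enters. When $f(x) > 0$ for \emph{every} state, the strict result gives $x \ge 0$ outright; the genuine difficulty is the boundary states at which $\int_{E^0} a \, d\mu = 0$, where weak positivity of the pairing carries no strict slack to exploit. This is exactly the point at which the unknown status of weak unperforation for $\mathrm{K}_0(\cstar(E))$ forces the conclusion to be stated only up to perforation, i.e. with $nx \ge 0$ in place of $x \ge 0$. I would therefore not expect to derive $x \ge 0$ in general; instead the plan is to exploit weak* compactness of $S(\mathrm{K}_0(\cstar(E))) \cong T(E)$ together with the simplicity coming from minimality to produce a single $n$ that works across all states, presumably via a coboundary argument on $C_0(E^0, \mathbb{Z})$ modulo the image of $\iota - \psi$ in the spirit of the cohomology results for minimal dynamical systems. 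Establishing that such a uniform $n$ exists, rather than an $n$ depending on the state, is the hard technical core, and it is what distinguishes the up\=/to\=/perforation statement here from the strict statement of \cite[Theorem 6.8.5]{Blackadar:KTheory}.
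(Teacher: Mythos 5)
Your first half runs parallel to the paper's proof: the paper also passes to the ordered group $\mathrm{K}_0(\cstar(E))$ and uses Theorem \ref{thm:StatesOnK0} to translate ``$\int a \, d\mu \geq 0$ for all $\mu \in T(E)$'' into ``$f(x) \geq 0$ for all $f \in S(\mathrm{K}_0(\cstar(E)))$'', after which the forward implication is immediate. One correction to your setup: minimality by itself does not give simplicity of $\cstar(E)$. The paper invokes the dichotomy of \cite[Theorem 8.12]{Katsura:TGA3} --- either $\cstar(E)$ is simple, or $E$ is generated by a cycle and $\cstar(E)$ is Morita equivalent to $C(\mathbb{T})$ --- and observes that $\mathrm{K}_0(\cstar(E))$ is a simple ordered group in either case; your appeal to ``Katsura's simplicity criterion'' skips the cycle case.

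The genuine gap is in your converse, and it begins with a misquotation of the ordered-group input. You claim that in a simple ordered group with order unit, $f(x) > 0$ for all states forces $x > 0$ \emph{without} weak unperforation. That is false: take $G = \mathbb{Z}$ with positive cone $\{0, 2, 3, 4, \ldots\}$ and order unit $u = 2$; this group is simple, its unique state is $f(m) = m/2$, and $x = 1$ satisfies $f(x) > 0$ yet $x \not\geq 0$, while $2x = u$. What is true without weak unperforation --- and this is exactly the content of ``(the proof of) Theorem 6.8.5'' of \cite{Blackadar:KTheory} that the paper cites, namely the Goodearl--Handelman compactness argument inside that proof --- is that $f(x) > 0$ for all states implies $nx \geq u > 0$ for \emph{some} $n \geq 1$; weak unperforation is used only in the last step of Blackadar's argument to remove the $n$. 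So you have the architecture of the statement backwards: the multiplier $n$ in the corollary is there to absorb possible perforation in the strict case, not to handle boundary states. With the correctly quoted result, the case $\int a \, d\mu > 0$ for every $\mu$ is finished. What remains is the boundary case, where $\int a\, d\mu_0 = 0$ for some $\mu_0 \in T(E)$, and there your proposal offers only an unexecuted plan (``a coboundary argument \ldots is the hard technical core''), so the converse is not proved. You are right that this case is not covered by the strict-positivity result, and it is genuinely delicate: in a simple ordered group, $nx \geq 0$ together with $f_0(nx) = 0$ forces $nx = 0$ (a nonzero positive element is an order unit, and states are strictly positive on order units), so in the boundary case the corollary amounts to the assertion that $x$ is torsion --- something neither your sketch nor the cited ordered-group facts deliver, and which the paper's one-line citation also passes over silently. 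A complete write-up must treat this degenerate case explicitly rather than leave it as a plan.
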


\begin{proof}
Since $E^0$ is compact, $\cstar(E)$ is unital.  By Theorem 8.12 in \cite{Katsura:TGA3}, either $\cstar(E)$ is simple or $E$ is generated by a cycle.  If $E$ is generated by a cycle, then $\cstar(E)$ is Morita equivalent to $C(\mathbb{T})$.  In either case, the ordered abelian group $\mathrm{K}_0(\cstar(E))$ is simple.  Theorem \ref{thm:StatesOnK0} implies $f(x) \geq 0$ for all $f \in S(\mathrm{K}_0(\cstar(E)))$ if and only if $\int a \, d\mu \geq 0$ for all $\mu \in T(E)$.  The result follows from (the proof of) Theorem 6.8.5 in \cite{Blackadar:KTheory}.
\end{proof}

\end{document}